\documentclass[a4paper, 10pt, twoside]{amsart}
\usepackage[top=3cm, bottom=3cm, left=3.5cm, right=3.5cm]{geometry}
\usepackage[pdftex]{graphicx}

\usepackage[utf8]{inputenc}
\usepackage[T1]{fontenc}
\usepackage[english]{babel}
\usepackage[noadjust]{cite}
\usepackage{amsthm,amssymb,epsfig,mathtools}
\usepackage{mathrsfs}  % for \mathscr 
\usepackage{bbm}        % bold math
\usepackage{esint}     % for /fint
\usepackage{dsfont}
\usepackage{color}
\usepackage{enumerate}
\usepackage{url}
\usepackage{algorithm2e}

\usepackage{calc}
\usepackage[inline]{enumitem}
\usepackage[normalem]{ulem}
\usepackage{url}

\usepackage[bookmarks, bookmarksnumbered, pdfstartview={XYZ null null 1.00}]{hyperref}

\usepackage{tikz}
\usetikzlibrary{shapes, arrows, arrows.meta, decorations.markings}

\usepackage{todonotes}
\usepackage[ulem=normalem,draft]{changes}
\usepackage{cancel}

\makeindex

\numberwithin{equation}{section}

\newcommand{\theoname}{Theorem}
\newcommand{\lemmname}{Lemma}
\newcommand{\coroname}{Corollary}
\newcommand{\propname}{Proposition}
\newcommand{\definame}{Definition}

\newcommand{\remkname}{Remark}
\newcommand{\explname}{Example}

\theoremstyle{plain}
\newtheorem{theorem}{\theoname}[section]
\newtheorem{lemma}[theorem]{\lemmname}

\theoremstyle{definition}

\newtheorem{remark}[theorem]{\remkname}
\newtheorem{example}[theorem]{\explname}

\newlist{hypothesis}{enumerate}{1}
\setlist[hypothesis]{label={\textup{(H\arabic*)}}, ref={(H\arabic*)}, leftmargin=*, widest*=10}

\newenvironment{block}%
  {\list{}{\leftmargin=.5in\rightmargin=.5in}  \item[]  }%
  {\endlist}

\def\dd{{\rm d}}
\newcommand{\eqdef}{\ensuremath{\stackrel{\mbox{\upshape\tiny def.}}{=}}}

\newcommand{\norm}[1]{\left\lVert#1\right\rVert}
\newcommand{\inner}[1]{\left\langle#1\right\rangle}

\def\1B{{\bf  1}}

\def\det{\mathop{\rm det}}

\def\dist{{\rm dist}}

\def\intt{\mathop{\rm int}}

\def\inf{\mathop{\rm inf}}
\def\sup{\mathop{\rm sup}}

\def\min{\mathop{\rm min}}
\def\max{\mathop{\rm max}}

\def\argmin{\mathop{\rm argmin}}

\newcommand{\mres}{\mathbin{\vrule height 1.6ex depth 0pt width
		0.13ex\vrule height 0.13ex depth 0pt width 1.3ex}}

\newcommand\proj{\mathop{\rm proj}}

\newcommand{\nc}{\newcommand}
\nc{\grad}{{\mbox{grad}\,}}
\nc{\R}{{\mathbb R}}
\nc{\Rn}{{\mathbb R}^n}
\nc{\N}{{\mathbb N}}
\nc{\Z}{{\mathbb Z}}
\nc{\K}{{\cal K}}
\nc{\kpo}{(\K^d_{2,gp})_o}
\nc{\krep}{\K^d_{3,gp}}
\nc{\BP}{\mathbb{P}}
\nc{\BQ}{\mathbb{Q}}
\nc{\BE}{\mathbb{E}}
\nc{\cH}{\mathcal{H}}
\nc{\cL}{\mathcal{L}}
\nc{\cE}{\mathcal{E}}
\nc{\BS}{\mathbb{S}}
\nc{\LL}{L^\circ}
\nc{\bB}{B}
\nc{\sph}{\mathbb{S}^{n-1}}
\nc{\Q}{\mathbb{Q}}
\nc{\cK}{\mathcal{K}}
\nc{\vaps}{\varepsilon}
\nc{\cV}{\mathcal{V}}
\nc{\cP}{\mathcal{P}}
\nc{\cR}{\mathcal{R}}
\nc{\cS}{\mathcal{S}}
\nc{\cB}{\mathcal{B}}
\nc{\fB}{\mathfrak{B}}
\nc{\cC}{\mathcal{C}}
\nc{\ver}{{\rm{vert\, }}}
\nc{\iF}{\,\raisebox{0.0pt}{$\square$}\,}
\nc{\ind}{\mathbf{1}}	
\nc{\fed}{\llcorner}
\DeclareMathOperator{\BM}{\mathrm{BM}}
\DeclareMathOperator{\dc}{\mathrm{d}_\mathrm{C}}
\DeclareMathOperator{\dH}{\mathrm{d}_\mathrm{H}}

\title[Quantitative Stability for Minkowski's problem]{Quantitative Stability for Minkowski's problem}

\author{Károly Böröczky}
\address{Alfréd Rényi Institute of Mathematics, Hungarian Academy of Sciences, Realtanoda u.
13-15, H-1053, Budapest, Hungary}
\email{boroczky.karoly.j@renyi.hu}
\author{João Miguel Machado}
\address{Lagrange Mathematical and Computational Center\\
103 rue de Grenelle\\
Paris, 75007, France}
\email{joao-miguel.machado@ceremade.dauphine.fr}
\author{João P. G. Ramos}
\address{Instituto Nacional de Matem\'atica Pura e Aplicada \\ Estrada Dona Castorina 110, Horto - Rio de Janeiro, RJ - Brazil}
\email{joao.ramos@impa.br}

\begin{document}

\begin{abstract}
We derive quantitative stability results for Minkowski bodies, as well as their counterparts, the $L_p$-Minkowski bodies in the range $1 \le p \neq n$. We prove that, for every pair of probability measures $\mu,\nu$ satisfying a quantitative form of the classical dispersion assumptions yielding existence of such bodies, we have a control of the form 
\[
    \inf_{x\in \mathbb{R}^n}\mathrm{d_H}(E_\mu, x + E_\nu) \le C \mathrm{d_C}(\mu,\nu)^{\frac{1}{n-1}}, \quad 
    \alpha(E_\mu, E_\nu)^2 \le C \mathrm{d_C}(\mu,\nu)^{1 + \frac{1}{n-1}},
\]
where $\mathrm{d_H}$ denotes the Hausdorff distance, $\alpha$ denotes the Fraenkel asymmetry and $\mathrm{d_C}$ is the dual-convex distance of probability measures on the sphere. Our arguments are based on a variational problem whose optimizers are Minkowski bodies, for which we can obtain strong-concavity properties with the quantitative Brunn-Minkowski and isoperimetric inequalities. While the exponent in the Hausdorff distance is sharp, the exponent in the Fraenkel asymmetry is optimal in dimension $2$. 
\bigskip

\noindent\textbf{Keywords.} $L_p$ Minkowski problem, quantitative stability, Wulff shape

\medskip

\noindent\textbf{2020 Mathematics Subject Classification.} 52A21, 52A40, 49Q10

\end{abstract}

\maketitle
\tableofcontents

\section{Overview on Minkowski problems}\label{sec.introduction}

Given a non-trivial finite Borel measure $\mu$ on the unit sphere $\mathbb{S}^{n-1}$ of the Euclidean space $\mathbb{R}^n$, Minkowski asked under what conditions  one can  find a convex body whose normal vectors are distributed according to $\mu$. This is now known as Minkowski's problem and has been studied extensively since the beginning of the 20th century. More precisely, let $E \subset \mathbb{R}^n$ be a convex body and let $\nu_E : \partial E \mapsto \mathbb{S}^{n-1}$ denote its Gauss map, when $\partial E$ is a $(n-1)$-dimensional $\mathscr{C}^1$ manifold $\nu_E(x)$ coincides with the outwards unit normal of $\partial E$ at the point $x$, then the surface measure of $E$ is given by 
\begin{equation}\label{eq.surface_measure_def}
    S_E \eqdef {(\nu_E)}_\sharp \mathcal{H}^{n-1}\mres \partial E, 
\end{equation}
where $\mathcal{H}^{n-1}$ denotes the $(n-1)$-dimensional Hausdorff measure of $\mathbb{R}^n$. Since $S_{\lambda\,E}=\lambda^{n-1}S_E$ holds for any $\lambda>0$, we may assume that our Borel measure $\mu$ is a probability distribution; namely, $\mu \in \mathscr{P}(\mathbb{S}^{n-1})$.
The \textit{Minkowski problem} can then be phrased as:
\begin{block}
    Given $\mu \in \mathscr{P}(\mathbb{S}^{n-1})$, is there a convex body $E$ such that $S_E = \mu $? 
\end{block}
Extending Minkowski's work~\cite{minkowski1903volumen,minkowski1911theorie}, this question was answered by Alexandrov~\cite{aleksandrov1938theory,aleksandrov1996ad} positively if, and only if, the barycenter in $\mathbb{R}^n$ of the measure $\mu$ on $\mathbb{S}^{n-1}$ is the origin and $\mu$ is not supported on any hyperplane. He showed that these objects are unique up to translations, and we let $E_\mu$ be the unique convex body centered at the origin such that $S_{E_\mu} = \mu$. These are called \textit{Minkowski bodies}.

Whenever $\dd\mu=f\dd(\mathcal{H}^{n-1}\mres\mathbb{S}^{n-1})$, the support functions associated with Minkowski bodies solve a corresponding Monge-Amp\`ere equation on $\mathbb{S}^{n-1}$ given
\begin{equation}
\label{MinkowskiMongeAmpere}
    \det(\nabla_{\mathbb{S}^{n-1}}^2 h+h I)=f, 
\end{equation}
where $\nabla_{\mathbb{S}^{n-1}} h$ is the spherical gradient and
$\nabla_{\mathbb{S}^{n-1}}^2 h$ is the spherical Hessian of $h$ with respect to a moving frame. Regularity of the solution of \eqref{MinkowskiMongeAmpere} was intensively investigated by 
Nirenberg~\cite{nirenberg1953weyl}, Cheng, Yau \cite{cheng1976regularity} and Pogorelov \cite{pogorelov1964monge}, and the final solution was only provided by Caffarelli \cite{caffarelli1990interior,caffarelli1990localization} in 1990   (see Figalli \cite{figalli2017monge} and Trudinger, Wang \cite{trudinger2008monge} for a detailed discussion).

In the present work, we address the question of stability of Minkowski bodies. That is, can we estimate a suitable distance between two Minkowski bodies $E_\mu, E_\nu$ with a suitable distance between $\mu,\nu$? It is well known that Minkowski bodies can be obtained as solutions to an appropriated variational problem, which will be discussed further on, so that the question of stability can be seen as stability of minimizers with respect to a parameter of the energy functional.

The stability issue for this problem was already addressed by Hug-Schneider~\cite{hug2002stability} following the works of Diskant~\cite{diskant1972bounds} on the stability of the isoperimetric inequality. They show a bound of the form 
\[
    \inf_{x_0 \in \mathbb{R}^n} 
    \dH(E_\mu, x_0 + E_\nu)^n 
    \le C \dc(\mu,\nu), 
\]
where the constant $C$ depends on the inner and circumradii of $E_\mu$ and $E_\nu$, $\dH$ is the Hausdorff distance between convex bodies, and $\dc$ denotes the \textit{convex-dual distance} defined for two measures $\mu,\nu$ over $\mathbb{S}^{n-1}$ as 
\begin{equation}\label{eq.convex_dual_distance}
    \dc(\mu,\nu) 
    \eqdef 
    \sup_{K \subseteq B_1(0)} 
    \left|
        \int_{\mathbb{S}^{n-1}} h_K \dd (\mu-\nu)
    \right|,
\end{equation}
where the supremum is taken over all convex bodies included in the unit ball, and $h_K$ corresponds to the support function of $K$, defined in~\eqref{eq.support_function}. Their work was then leveraged by Abdallah and Mérigot~\cite{abdallah2015reconstruction} in the task of reconstructing convex bodies from random measurements of their normals.

In the present work, we prove stability results assuming the form
\begin{equation}\label{eq.BM_stability_prototype}
    \begin{aligned}
        \inf_{x_0 \in \mathbb{R}^n} \dH(E_\mu, x_0 + E_\nu) 
        &\le C {\dc(\mu,\nu)}^{\frac{1}{n-1}}, \\ 
        {\alpha(E_\mu, E_\nu)}^2 
        &\leq C {\dc(\mu,\nu)}^{1 + \frac{1}{n-1}},
    \end{aligned}
\end{equation}
for some constant $C$. Here $\alpha$ denotes the \textit{Fraenkel asymmetry}, used for stability statements by~\cite{fusco2008sharp} in the case of the isoperimetric inequality and by \cite{figalli2010stability} in the case of the more general anisotropic isoperimetric inequality, defined  as 
\begin{equation}\label{eq.fraenkel_asymmetry}
    \alpha(E,F) 
    \eqdef 
    \inf
    \left\{
        \frac{|E \Delta (x_0 + rF)|}{|E|}: 
        \substack{
            \displaystyle
            x_0 \in \mathbb{R}^n,\\ 
            \displaystyle
            r^n|F| = |E|,\; r>0
        }
    \right\}.
\end{equation}
It holds that $\alpha(E,F)=\alpha(F,E)$, and $\alpha(E,F)=0$ if and only if $E$ and $F$ are homothetic, so that $\alpha$ becomes a distance on the equivalence class of homotheties. Furthermore, in examples~\ref{exemple.sharp_hausdorff} and~\ref{exemple.sharp_fraenkel} we show that the exponent for the Hausdorff distance estimate is sharp, and that the exponent for the Fraenkel asymmetry is sharp in dimension $2$, showing therefore a Lipschitz behavior. 

Let us briefly discuss the choice of distances in~\eqref{eq.BM_stability_prototype}. Since the support functions $h_K$ are $1$-Lipschitz for $K \subset B_1$, whenever $\mu,\nu$ are probability measures it follows directly that $\dc(\mu,\nu) \le W_1(\mu,\nu)$, the $1$-Wasserstein distance defined via the theory of optimal transport, see~\cite{santambrogio2015optimal,ambrosio2021lectures} for its definition and basic properties. Therefore, a direct corollary is a stability result formulated w.r.t. $W_1$. Since the Wasserstein distances, on compact ambient spaces, metrize weak convergence of measures, estimate~\eqref{eq.BM_stability_prototype} provides a way of approximating general convex bodies with polygons, without any regularity or curvature assumption. 

Before commenting on the left-hand side of~\eqref{eq.BM_stability_prototype}, let us discuss why we should expect inequality~\eqref{eq.BM_stability_prototype} to hold. In finite-dimensional variational problems, and more generally in Hilbert spaces, quantitative stability of minimizers is typically a consequence of favorable coercivity properties of the functional being optimized~\cite{bonnans2013perturbation}. This mechanism is often captured by \textit{Łojasiewicz inequalities}~\cite{lojasiewicz1965ensembles}, which relate the energy gap to a distance of a point from the set of minimizers. A function 
$f$ satisfies a Łojasiewicz inequality if it holds for all $x$ that
\begin{equation}\label{eq.lojasiewicz_inequalities}
    \dist(x, \argmin f)^\theta 
    \leq  
    C(f(x) - \inf f)
\end{equation}
for some $\theta > 0$ and $C>0$.\footnote{
    For sufficiently smooth energies $f$, it is equivalent to having an inequality of the form ${(f(x) - \inf f)}^\theta \leq C \|\nabla f(x)\|$.
} 
In this sense, the usage of the Fraenkel asymetry in~\eqref{eq.BM_stability_prototype} reflects naturally these inequalities in the form of a distance to the set o maximizers of the functional $\mathscr{F}_\mu$ used in your proof, see Lemma~\ref{lemma.variational_problem} and also Remark~\ref{remark.lojasiewicz} following it. Such inequalities reflect an underlying curvature of the energy landscape and play a central role in stability and convergence results for associated gradient flows. Indeed, if $f$ is $\alpha$-strongly convex this identity is true with $\theta = 2$ and $C = \alpha^{-1}$. Extensions of this philosophy to non-linear spaces, such as the Wasserstein space of probability measures, have been developed in recent years, where functional inequalities are reinterpreted as generalized Łojasiewicz inequalities, see for instance~\cite{blanchet2018family}. 

The situation for the Minkowski problem is fundamentally different. The natural topologies on the space of convex bodies, such as Hausdorff convergence or $L^1$-convergence are essentially flat, so instead of leveraging curvature, stability must arise from the rigidity of convex geometry. In this case, convexity/concavity stems naturally from the classical \textit{Brunn-Minkowski inequality}, the reader is referred to~\cite{gardner2002brunn,schneider2014covexbodies,boroczky2025isoperimetric} for comprehensive surveys on the many applications of the Brunn-Minkowski theory. It can be stated as follows: given two convex bodies $E,F \subset \mathbb{R}^n$, for all $t \in (0,1)$, we have that
\begin{equation}\label{eq.Brunn-Minkowski}
    {|t E + (1-t)F|}^{\frac{1}{n}} 
    \ge 
    t{|E|}^{\frac{1}{n}} + (1-t){|F|}^{\frac{1}{n}},  
\end{equation}   
where equality holds if and only if $E$ and $F$ are equal up a translation and a dilation.  

In this context, we propose a variational problem whose optimizers are translations and dilations of Minkowski bodies associated with a prescribed measure $\mu$, that act as a perturbation parameter. For this energy the quantitative stability of the Brunn-Minkowski inequality, see for instance~\cite{figalli2009refined,figalli2017quantitative,figalli2024sharp}, assumes the role of a Łojasiewicz inequality and allows us to deduce stability for Minkowski bodies. This explains the exponent $2$ on the Fraenkel asymmetry term as the sharp exponent for the quantitative stability versions of both the Brunn-Minkowski and the Wulff isoperimetric inequalities, see for instance~\cite{fusco2008sharp,figalli2010stability,figalli2017quantitative}. The exponent on the right-hand side $1 + \frac{1}{n}$ is obtained by combining a gradient-concavity principle with an older quantitative version of isoperimetry due to Diskant~\cite{diskant1972bounds}. 

Previous results from Hug and Schneider~\cite{hug2002stability} obtain stability results conditioned to having bounds on the inner and circumradii of Minkowski bodies. This conditioning on geometric bounds in~\cite{hug2002stability} seem inevitable if we do not make quantitative the assumption of Alexandrov's theorem, that $\mu$ is not concentrated on a hyperplane, which is reflected in the constant $C$ from~\eqref{eq.BM_stability_prototype}. Notice that if such an inequality was true for any pair of measures $\mu,\nu$, we could take $\mu = \frac{1}{2}\delta_\theta + \frac{1}{2}\delta_{-\theta}$, for some $\theta \in {\mathbb{S}}^{n-1}$, a sequence ${\left(\mu_k\right)}_{k \in \mathbb{N}}$ not supported in a hyperplane and converging to $\mu$ in the Wasserstein distance. As $\mu_k$ is a Cauchy sequence for $W_1$, if~\eqref{eq.BM_stability_prototype} holds for all measures in $\mathscr{P}(\mathbb{S}^{n-1})$ so will be the associated sequence of Minkowski bodies $E_k$, which would allow to construct a Minkowski body for $\mu$. This is clearly absurd, since Alexandrov's condition of $\mu$ not being supported on a hyperplane is necessary and sufficient for the existence of a Minkowski body.

Therefore, to obtain a result such as~\eqref{eq.BM_stability_prototype}, we need to make quantitative the assumption of $\mu$ not being concentrated on a hyperplane. This idea has been exploited in a recent work of the second and third authors~\cite{machado2025quantitative} concerning the quantitative stability of moment measures through the following functional 
\begin{equation}\label{eq.Theta_functional}
    \Theta(\mu) 
    \eqdef 
    \inf_{\theta \in \mathbb{S}^{n-1}} 
    \int_{\mathbb{S}^{n-1}} |\theta \cdot x| \dd \mu(x).
\end{equation}
If follows directly from the definition that $\Theta(\mu) > 0$ if and only if $\mu$ is not supported on a hyperplane, and hence it can be used as quantitative measure of how this condition is satisfied. In addition, if $E_\mu$ is the centered Minkowski body associated to $\mu$, we have for all $\theta \in \mathbb{S}^{n-1}$ that
\begin{equation}
    \int_{\mathbb{S}^{n-1}} |\theta \cdot y| \dd \mu(y) 
    = 
    \int_{\partial E_\mu} |\theta \cdot \nu_{E_\mu}(x)| \dd \mathcal{H}^{n-1}(x)
    = 
    \mathcal{H}^{n-1}({\proj}_{\theta^\perp} E_\mu),
\end{equation}
where the last equality is known as \textit{Cauchy's projection formula}. As a result, the quantity $\Theta(\mu)$ gives a lower bound on the size of the projections of $E_\mu$ and hence on the size of $E_\mu$ itself. See Figure~\ref{fig.degenerate_minko_bodies} for an illustration of the role of this functional in the construction proposed in the previous paragraph. As a result, the constant $C$ in~\eqref{eq.BM_stability_prototype} will be uniform in the class of measures $\mu \in \mathscr{P}(\mathbb{S}^{n-1})$ such that $\Theta(\mu) \geq \vartheta$, and $C = C_{n,\vartheta}$.  As a result, the stability of Minkowski bodies degenerates as $\mu$ and $\nu$ approach measures supported on hyperplanes. The rigorous statement of our quantitative stability result is as follows. 

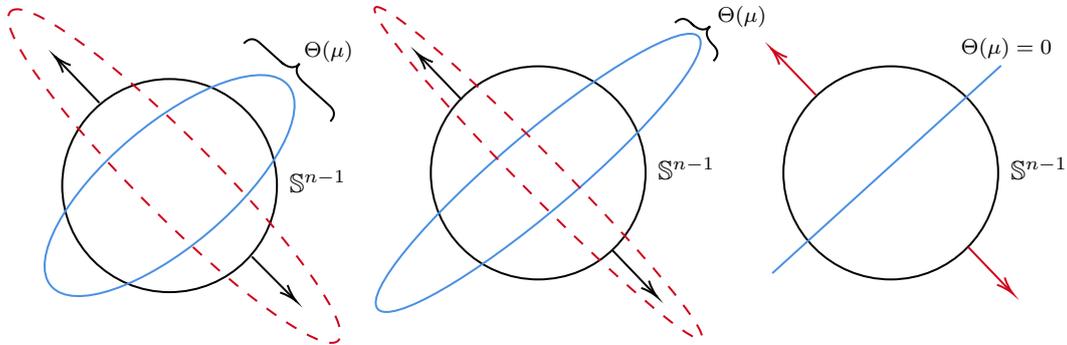
\begin{figure}[ht]\label{fig.degenerate_minko_bodies}
    \centering
    \tikzset{every picture/.style={line width=0.75pt}} %set default line width to 0.75pt        
\centering
\begin{tikzpicture}[x=0.75pt,y=0.75pt,yscale=-0.8,xscale=0.8]
%uncomment if require: \path (0,302); %set diagram left start at 0, and has height of 302

%Shape: Circle [id:dp5978115037546095] 
\draw   (35,143.33) .. controls (35,106.33) and (65,76.33) .. (102,76.33) .. controls (139,76.33) and (169,106.33) .. (169,143.33) .. controls (169,180.33) and (139,210.33) .. (102,210.33) .. controls (65,210.33) and (35,180.33) .. (35,143.33) -- cycle ;
%Shape: Ellipse [id:dp29341954877000376] 
\draw  [color={rgb, 255:red, 74; green, 144; blue, 226 }  ,draw opacity=1 ] (28.08,206.46) .. controls (14.4,190.44) and (36.4,149.19) .. (77.23,114.32) .. controls (118.05,79.46) and (162.24,64.18) .. (175.92,80.2) .. controls (189.6,96.21) and (167.6,137.47) .. (126.77,172.33) .. controls (85.95,207.2) and (41.76,222.48) .. (28.08,206.46) -- cycle ;
%Shape: Ellipse [id:dp23881267719319044] 
\draw  [color={rgb, 255:red, 208; green, 2; blue, 27 }  ,draw opacity=1 ][dash pattern={on 4.5pt off 4.5pt}] (2.91,33.73) .. controls (12.74,24.77) and (66.42,63.88) .. (122.81,121.1) .. controls (179.19,178.32) and (216.93,231.97) .. (207.09,240.93) .. controls (197.26,249.89) and (143.58,210.77) .. (87.19,153.55) .. controls (30.81,96.34) and (-6.93,42.69) .. (2.91,33.73) -- cycle ;
%Straight Lines [id:da7441815125115865] 
\draw    (58.23,91.32) -- (31.71,63.45) ;
\draw [shift={(30.33,62)}, rotate = 46.43] [color={rgb, 255:red, 0; green, 0; blue, 0 }  ][line width=0.75]    (10.93,-3.29) .. controls (6.95,-1.4) and (3.31,-0.3) .. (0,0) .. controls (3.31,0.3) and (6.95,1.4) .. (10.93,3.29)   ;
%Shape: Circle [id:dp7005638453799473] 
\draw   (265,135.33) .. controls (265,98.33) and (295,68.33) .. (332,68.33) .. controls (369,68.33) and (399,98.33) .. (399,135.33) .. controls (399,172.33) and (369,202.33) .. (332,202.33) .. controls (295,202.33) and (265,172.33) .. (265,135.33) -- cycle ;
%Shape: Ellipse [id:dp6845662037057292] 
\draw  [color={rgb, 255:red, 74; green, 144; blue, 226 }  ,draw opacity=1 ] (231.85,220.86) .. controls (223.25,210.79) and (261.12,164.34) .. (316.43,117.1) .. controls (371.74,69.86) and (423.55,39.73) .. (432.15,49.79) .. controls (440.75,59.86) and (402.88,106.32) .. (347.57,153.56) .. controls (292.26,200.8) and (240.45,230.93) .. (231.85,220.86) -- cycle ;
%Shape: Ellipse [id:dp8539648982603185] 
\draw  [color={rgb, 255:red, 208; green, 2; blue, 27 }  ,draw opacity=1 ][dash pattern={on 4.5pt off 4.5pt}] (230.29,31.35) .. controls (235.31,26.81) and (284.92,69.68) .. (341.09,127.11) .. controls (397.26,184.53) and (438.73,234.76) .. (433.71,239.3) .. controls (428.69,243.84) and (379.08,200.97) .. (322.91,143.55) .. controls (266.74,86.13) and (225.27,35.9) .. (230.29,31.35) -- cycle ;
%Straight Lines [id:da14020280087178227] 
\draw    (284.23,89.32) -- (257.71,61.45) ;
\draw [shift={(256.33,60)}, rotate = 46.43] [color={rgb, 255:red, 0; green, 0; blue, 0 }  ][line width=0.75]    (10.93,-3.29) .. controls (6.95,-1.4) and (3.31,-0.3) .. (0,0) .. controls (3.31,0.3) and (6.95,1.4) .. (10.93,3.29)   ;
%Straight Lines [id:da5603361679448644] 
\draw    (378.33,184) -- (404.85,211.87) ;
\draw [shift={(406.23,213.32)}, rotate = 226.43] [color={rgb, 255:red, 0; green, 0; blue, 0 }  ][line width=0.75]    (10.93,-3.29) .. controls (6.95,-1.4) and (3.31,-0.3) .. (0,0) .. controls (3.31,0.3) and (6.95,1.4) .. (10.93,3.29)   ;
%Shape: Circle [id:dp9475784237826396] 
\draw   (485,135.33) .. controls (485,98.33) and (515,68.33) .. (552,68.33) .. controls (589,68.33) and (619,98.33) .. (619,135.33) .. controls (619,172.33) and (589,202.33) .. (552,202.33) .. controls (515,202.33) and (485,172.33) .. (485,135.33) -- cycle ;
%Straight Lines [id:da6044181775537695] 
\draw [color={rgb, 255:red, 208; green, 2; blue, 27 }  ,draw opacity=1 ]   (505.23,86.32) -- (478.71,58.45) ;
\draw [shift={(477.33,57)}, rotate = 46.43] [color={rgb, 255:red, 208; green, 2; blue, 27 }  ,draw opacity=1 ][line width=0.75]    (10.93,-3.29) .. controls (6.95,-1.4) and (3.31,-0.3) .. (0,0) .. controls (3.31,0.3) and (6.95,1.4) .. (10.93,3.29)   ;
%Straight Lines [id:da29508541854137216] 
\draw [color={rgb, 255:red, 208; green, 2; blue, 27 }  ,draw opacity=1 ]   (600.33,182) -- (626.85,209.87) ;
\draw [shift={(628.23,211.32)}, rotate = 226.43] [color={rgb, 255:red, 208; green, 2; blue, 27 }  ,draw opacity=1 ][line width=0.75]    (10.93,-3.29) .. controls (6.95,-1.4) and (3.31,-0.3) .. (0,0) .. controls (3.31,0.3) and (6.95,1.4) .. (10.93,3.29)   ;
%Straight Lines [id:da38477923288259275] 
\draw [color={rgb, 255:red, 74; green, 144; blue, 226 }  ,draw opacity=1 ]   (478.08,198.46) -- (620.82,67.86) ;
%Straight Lines [id:da7861012215950758] 
\draw    (153.33,188) -- (179.59,214.58) ;
\draw [shift={(181,216)}, rotate = 225.34] [color={rgb, 255:red, 0; green, 0; blue, 0 }  ][line width=0.75]    (10.93,-3.29) .. controls (6.95,-1.4) and (3.31,-0.3) .. (0,0) .. controls (3.31,0.3) and (6.95,1.4) .. (10.93,3.29)   ;
%Shape: Brace [id:dp7772620483523939] 
\draw   (202.33,103) .. controls (205.47,99.55) and (205.31,96.25) .. (201.85,93.11) -- (185.8,78.55) .. controls (180.87,74.07) and (179.97,70.1) .. (183.1,66.64) .. controls (179.97,70.1) and (175.93,69.59) .. (170.99,65.11)(173.21,67.12) -- (158.22,53.52) .. controls (154.77,50.39) and (151.47,50.55) .. (148.33,54) ;
%Shape: Brace [id:dp11747602823300118] 
\draw   (441.33,65) .. controls (444.62,61.71) and (444.62,58.41) .. (441.33,55.12) -- (441.33,55.12) .. controls (436.62,50.41) and (435.92,46.41) .. (439.22,43.12) .. controls (435.92,46.41) and (431.92,45.71) .. (427.22,41)(429.33,43.12) -- (427.22,41) .. controls (423.92,37.71) and (420.62,37.71) .. (417.33,41) ;

% Text Node
\draw (175,132.06) node [anchor=north west][inner sep=0.75pt]    {$\mathbb{S}^{n-1}$};
% Text Node
\draw (405,124.06) node [anchor=north west][inner sep=0.75pt]    {$\mathbb{S}^{n-1}$};
% Text Node
\draw (625,124.06) node [anchor=north west][inner sep=0.75pt]    {$\mathbb{S}^{n-1}$};
% Text Node
\draw (184,50.4) node [anchor=north west][inner sep=0.75pt]  [font=\footnotesize]  {$\Theta ( \mu )$};
% Text Node
\draw (442,28.4) node [anchor=north west][inner sep=0.75pt]  [font=\footnotesize]  {$\Theta ( \mu )$};
% Text Node
\draw (594,48.4) node [anchor=north west][inner sep=0.75pt]  [font=\footnotesize]  {$\Theta ( \mu ) =0$};

\end{tikzpicture}
    \caption{Family of ellipsoids collapsing to a line as $\Theta$ goes to $0$.}
\end{figure}

\begin{theorem}\label{thm.quantitative_stability_p=1}
    Let $\mu, \nu \in \mathscr{P}(\mathbb{S}^{n-1})$ be two measures such that $\Theta(\mu), \Theta(\nu) \ge \vartheta >0$, and let $E_\mu, E_\nu$ be the associated Minkowski bodies centered at the origin. Then there is a constant $C_{\vartheta,n} >0$ depending only on $\vartheta$ and the dimension $n$ such that 
    \begin{equation}
        \inf_{x_0 \in \mathbb{R}^n} 
        \dH(E_\mu, x_0 + E_\nu) 
        \le C_{\vartheta,n} {\dc(\mu,\nu)}^{\frac{1}{n-1}},
    \end{equation}
    where the exponent $\frac{1}{n-1}$ is optimal, 
    and 
    \begin{equation}
        {\alpha(E_\mu, E_\nu)}^2
        \le 
        C_{\vartheta,n} {\dc(\mu, \nu)}^{1 + \frac{1}{n-1}},
    \end{equation}
    with a sharp exponent in dimension $2$.
\end{theorem}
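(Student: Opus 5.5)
We will obtain both estimates from the variational characterization of Minkowski bodies: $E_\mu$ maximizes (up to translation) the volume $|K|$ among convex bodies $K$ with $\int h_K\,\dd\mu \le \int h_{E_\mu}\dd\mu = n|E_\mu|$. Equivalently, $E_\mu$ maximizes the scale-invariant functional $\mathscr{F}_\mu(K)=|K|^{1/n}/\int h_K\,\dd\mu$. The link between the two bodies comes from Minkowski's first inequality, $V_1(E_\mu,K)=\frac1n\int h_K\,\dd\mu \ge |E_\mu|^{\frac{n-1}{n}}|K|^{\frac1n}$, which is exactly the gradient of Brunn--Minkowski at $t=0$. Its equality case characterizes homothety.

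\textbf{Step 1 (uniform geometric bounds).} From $\Theta(\mu)\ge\vartheta$ and Cauchy's projection formula, every projection of $E_\mu$ has $(n-1)$-volume at least $\vartheta$, while $S_{E_\mu}(\mathbb{S}^{n-1})=1$. Hence the surface area is $1$ and all projections are bounded below. We deduce that the centered body satisfies $r\,B_1\subseteq E_\mu\subseteq R\,B_1$ with $r,R$ depending only on $n,\vartheta$. The diameter bound follows because a long body with surface area $1$ has a small projection in the direction of its diameter; the inradius then follows from the lower bound on projections. In particular $|E_\mu|$, $|E_\nu|$ and $h_{E_\nu}/R$ are controlled, and $h_{E_\nu}/R$ is the support function of a body inside $B_1$.

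\textbf{Step 2 (deficit bound).} Set $V=|E_\mu|$, $W=|E_\nu|$. Then
\[
\tfrac1n\int h_{E_\nu}\dd\mu \le \tfrac1n\int h_{E_\nu}\dd\nu + \tfrac{R}{n}\dc(\mu,\nu) = W + \tfrac Rn \dc.
\]
Symmetrically, $\tfrac1n\int h_{E_\mu}\dd\nu\le V+\tfrac Rn\dc$. Combining each with Minkowski's inequality gives $V^{\frac{n-1}n}W^{\frac1n}\le W+C\dc$ and $W^{\frac{n-1}n}V^{\frac1n}\le V+C\dc$. Adding the two yields $|V-W|\lesssim \sqrt{\dc}$. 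More importantly, the Minkowski deficit
\[
\delta:=\frac{V_1(E_\mu,E_\nu)}{V^{\frac{n-1}n}W^{\frac1n}}-1
\]
satisfies $\delta\le C\dc$ once the volume discrepancy is absorbed. To absorb it, sum the two deficits: the terms $W+V$ cancel against the products up to nonnegative quantities, which bounds each deficit by $C\dc$.

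\textbf{Step 3 (from deficit to distances).} For the Hausdorff estimate we invoke Diskant's stability of Minkowski's first inequality: for convex bodies with controlled in- and circumradius,
\[
\inf_{x_0}\dH(E_\mu,x_0+sE_\nu)\le C\delta^{1/(n-1)}\quad\text{(in the small-deficit regime)},
\]
with $s$ the volume-matching scale. It then remains to show $|s-1|\lesssim \dc^{1/(n-1)}$. This follows from the surface-area normalization: both bodies have surface area $1$, and the surface area is Lipschitz in Hausdorff distance under the radius bounds. When $\dc$ is not small, the estimate is trivial by Step 1, since $\dH\le 2R$.

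For the Fraenkel estimate we use the sharp quantitative Wulff/Brunn--Minkowski inequality of Figalli--Maggi--Pratelli. Applied with $E_\nu$ as the Wulff shape, it gives $\alpha(E_\mu,E_\nu)^2\le C\,\delta_{\mathrm{aniso}}$, where the anisotropic perimeter is $\int h_{E_\nu}\dd S_{E_\mu}=nV_1(E_\mu,E_\nu)$. This alone yields only $\alpha^2\lesssim \dc$. To gain the extra factor $\dc^{1/(n-1)}$ we use a gradient-concavity refinement. Write
\[
\int h_{E_\nu}\,\dd(\mu-\nu)=\int (h_{E_\nu}-h_{x_0+E_\mu})\,\dd(\mu-\nu)
\]
(the translation is allowed because both measures have zero barycenter, and $\int h_{E_\mu}\dd(\mu-\nu)$ is handled symmetrically). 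The function $h_{E_\nu}-h_{x_0+E_\mu}$ has sup-norm $\dH\lesssim \dc^{1/(n-1)}$ and is a difference of support functions of bodies in a ball of radius $R$. Hence this pairing is bounded by $C\,\dH\cdot\dc$, provided a difference of support functions, normalized by its size, can be tested against $\dd\mu-\dd\nu$ at cost $\dc$. Adding the two symmetric deficits, the zeroth-order terms cancel and the remaining deficit is at most $C\dc\cdot\dc^{1/(n-1)}$. Feeding this into the quantitative Wulff inequality gives the claim. Finally, optimality will be shown separately by examples: polytopes perturbed by small facets for Hausdorff distance, and planar polygons for Fraenkel asymmetry.

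\textbf{Main obstacle.} The hardest step is this second-order cancellation. The dual-convex distance only tests convex support functions, not differences of them, and $h_{E_\nu}-h_{E_\mu}$ is not a support function of a small body. My first attempt would be to write
\[
h_{E_\nu}-h_{E_\mu}=h_{E_\nu+\varepsilon B}-h_{E_\mu}-\varepsilon,
\]
choosing $\varepsilon\approx\dH$ so that $E_\mu\subseteq E_\nu+\varepsilon B$. The goal is then to control the positive part through $\dc$ applied to appropriately scaled bodies, whose support functions differ by $O(\varepsilon)$. This control may require exploiting the convexity of the family $t\mapsto h_{(1-t)E_\mu+tE_\nu}$ and the concavity of $t\mapsto|(1-t)E_\mu+tE_\nu|^{1/n}$, rather than a direct estimate on the difference.
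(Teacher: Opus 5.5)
Your plan follows the paper's proof almost step by step. You bound a Brunn--Minkowski type deficit by $\dc$ through the first variation; your Step 2 is the paper's concavity estimate for $\mathscr{F}_\mu$, rewritten through Minkowski's first inequality. You then convert the deficit into distances via Diskant and Figalli--Maggi--Pratelli. Finally you sharpen the bound by translating $E_\nu$ and using the zero barycenters to gain a factor $\dH$. Two things fail.

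First, Diskant's Hausdorff stability comes with exponent $1/n$, not $1/(n-1)$. The paper uses $\inf_{x_0}\dH(E_\mu,x_0+\lambda E_\nu)^n\le C\,\delta_{\BM}(E_\mu,E_\nu)$. The cube with a corner cut off at depth $t$ (the paper's sharpness example) has Brunn--Minkowski and Minkowski deficits of order $t^n$ relative to the cube, while its Hausdorff distance to every homothetic copy of the cube is of order $t$. So no exponent $1/(n-1)$ is possible. With the correct exponent your Step 3 only yields $\inf_{x_0}\dH\lesssim\dc^{1/n}$, the Hug--Schneider rate. The exponent $1/(n-1)$ therefore rests on the same refinement $\delta\lesssim\dc\,\dH$ as your Fraenkel bound; the paper feeds it into Diskant as $\dH^n\lesssim\dc\,\dH$. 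A smaller slip: adding your two inequalities in Step 2 gives nothing, since the sum rearranges to $-(V^{1/n}-W^{1/n})(V^{\frac{n-1}{n}}-W^{\frac{n-1}{n}})\le 2C\dc$. Used separately they give $|V-W|\lesssim\dc$, which is what you need.

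Second, and decisively, the refinement is exactly the step you flag, and the paper does not supply the missing argument. It writes the same translated decomposition and bounds $\langle\nu-\mu,h_{E_\mu}-h_{x_0+E_\nu}\rangle$ by $C\,\dc(\mu,\nu)\,\dH(E_\mu,x_0+E_\nu)$ in one line. That bound is false even under the uniform condition $\Theta\ge\vartheta$, so no manipulation with $E_\nu+\varepsilon B$ can close the gap. In the plane, symmetry of mixed areas gives $\int h_K\,\dd(\mu-\nu)=\int(h_{E_\mu}-h_{E_\nu})\,\dd S_K$, hence $\dc(\mu,\nu)\le 2\pi\inf_{x_0}\dH(E_\mu,x_0+E_\nu)$. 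The refinement would then force the reverse stability inequality $\delta\lesssim\dH^2$, but the deficits weight the $j$-th Fourier mode of $h_{E_\mu}-h_{E_\nu}$ by about $j^2$. Concretely, set $\varepsilon_k=\frac{1}{2(k^2-1)}$, let $E_{\mu_k}$ have support function $\frac{1}{2\pi}(1+\varepsilon_k\cos k\theta)$, and let $E_\nu$ be the disc of radius $\frac{1}{2\pi}$, so that
\[
\mu_k=\tfrac{1}{2\pi}\bigl(1-\tfrac12\cos k\theta\bigr)\,\dd\theta,\qquad \nu=\tfrac{1}{2\pi}\,\dd\theta,\qquad \Theta(\mu_k),\ \Theta(\nu)\ge\tfrac{1}{\pi}.
\]
Then $\dc(\mu_k,\nu)\le\varepsilon_k$ and $\dH(E_{\mu_k},E_\nu)=\frac{\varepsilon_k}{2\pi}$. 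Yet the pairing equals $\frac{\varepsilon_k}{8\pi}$ for every $x_0$, and $\delta_{\BM}(E_{\mu_k},E_\nu)=\frac{\varepsilon_k}{32}+O(\varepsilon_k^2)$. So this strategy, yours and the paper's, only proves $\alpha^2\lesssim\dc$ and $\inf_{x_0}\dH\lesssim\dc^{1/n}$.

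In the plane the stated bounds can be recovered by duality instead. The quantity $\inf_{x_0}\dH(E_\mu,x_0+E_\nu)$ is the supremum of $\int(h_{E_\mu}-h_{E_\nu})\,\dd\sigma$ over signed measures $\sigma$ with zero first moment and total variation at most $1$. Each such $\sigma$ equals $S_{K_+}-S_{K_-}$, where $S_{K_\pm}=\sigma^\pm+|m|\,\delta_{-m/|m|}$ and $m=\int\theta\,\dd\sigma^+$; here $K_\pm$ are (possibly degenerate) convex sets of perimeter at most $2$. The symmetry above then gives $\inf_{x_0}\dH\le 2\,\dc(\mu,\nu)$, and $\alpha\lesssim_\vartheta\dH$ yields the planar Fraenkel bound. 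For $n\ge 3$ the improved exponents need a genuinely different idea.
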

% \noindent{\bf Remark. } It follows that if $\mu(S^{n-1})=\nu(S^{n-1})=1$, then
%  \begin{equation}
% \alpha(E_\mu, E_\nu)
%      \le 
%   C_{\vartheta,n} W_1(\mu, \nu)^{1/2}.
% \end{equation}

It turns out that our techniques are also applicable to the $L_p$-Minkowski problems for all $1 \le p \neq n$. Given a convex body $E$, Lutwak~\cite{lutwak1992brunn-minkowki-firey,lutwak1996brunn} defines the $L_p$ curvature measure as
\begin{equation}
    S_{E,p} \eqdef h_E^{1-p}S_E,
\end{equation}
and one can ask the same question as before, under which conditions on $\mu \in \mathscr{P}(\mathbb{S}^{n-1})$ can we find a convex body $E_{\mu,p}$ whose $L_p$ surface measure coincides with $\mu$? In the regime $1 \le p \neq n$, this problem is well understood, see for instance~\cite{cheng1976regularity,hug2005lp,boroczkylogarithmic,HYZ2025}. On the other hand, whenever $\dd\mu=f\dd(\mathcal{H}^{n-1}\mres \mathbb{S}^{n-1})$ the corresponding Monge-Ampère equation becomes  
\[
    \det(\nabla^2 h+h I)=h^{p-1}f. 
\]

In this case, the $L_p$ surface measure is not translation invariant, as translating a convex body changes its support function. Therefore, this $L_p$ Minkowski body is unique whenever it exists, instead of unique up to translations, and we no longer need to assume that the measure $\mu$ is centered at the origin. In fact, we have existence and uniqueness whenever $\mu$ is not concentrated on any half space, that is 
\begin{equation}\label{eq.half_space_assumption}
    \mu(H_\theta^+) > 0, \text{ for all $\theta \in \mathbb{S}^{n-1}$ where }
    H_\theta^+ \eqdef \{
        x \in \mathbb{R}^n : x\cdot \theta > 0
    \}.
\end{equation}
In a sense, this is the correct dispersion assumption for the $p = 1$ case a well, but with the further assumption that $\mu$ is centered, it becomes equivalent to assuming $\Theta(\mu) > 0$. Without the centering assumption on $\mu$ in the $L_p$ case, the natural functional to make~\eqref{eq.half_space_assumption} quantitative is 
\begin{equation}
    \Theta_+(\mu) 
    \eqdef 
    \inf_{\theta \in \mathbb{S}^{n-1}} 
    \int_{\mathbb{S}^{n-1}} 
    (\theta\cdot x)_+ \dd \mu(x). 
\end{equation}

Since we always have $\Theta(\mu) \ge \Theta_+(\mu)$, we deduce a general control of the geometry of a convex body $E$ by means of the quantity $\Theta(S_{E,p})$. Whereas, for the isoperimetric/Brunn-Minkowski rigidity, we exploit the fact that this variant is closely related to the $L_p$ Brunn-Minkowski theory started by 
Lutwak~\cite{lutwak1992brunn-minkowki-firey,lutwak1996brunn} extending earlier results by Firey~\cite{firey1962p,firey1974shapes}, see also the recent monograph covering the state of the art on this topic~\cite{boroczky2025isoperimetric}. In Section~\ref{sec.quantitativeLpBM_iso}, we leverage the quantitative stability for the Brunn-Minkowski and isoperimetric inequalities in the $p = 1$ case to obtain $L_p$ counterparts of such inequalities. This discussion gives us all the elements necessary to generalize the proof of Theorem~\ref{thm.quantitative_stability_p=1} to the $L_p$ setting.

\begin{theorem}\label{thm.quantitative_stability_p}
    Consider $1 < p \neq n$, and let $\mu, \nu \in \mathscr{P}(\mathbb{S}^{n-1})$ be two measures such that $\Theta_+(\mu), \Theta_+(\nu) \ge \vartheta >0$. Consider their associated $L_p$ Minkowski bodies $E_{\mu,p}, E_{\nu,p}$. Then there is a constant $C_{\vartheta,n,p} >0$ depending only on $\vartheta$, the dimension $n$, and the parameter $p$ such that 
    \begin{equation}
        \inf_{x \in \mathbb{R}^n} 
        \dH(E_{\mu,p}, x + E_{\nu,p}) 
        \le C_{\vartheta,n,p} {\dc(\mu,\nu)}^{\frac{1}{n}},
        \text{ and }
        {\alpha(E_{\mu,p}, E_{\nu,p})}^2
        \le 
        C_{\vartheta,n,p} {\dc(\mu, \nu)}.
    \end{equation}

    If in addition $\mu,\nu$ are centered at the origin, then we have a control in Hausdorff distance
    \begin{equation*}
        \inf_{x \in \mathbb{R}^n} 
        \dH(E_{\mu,p}, x + E_{\nu,p}) 
        \le C_{\vartheta,n,p} {\dc(\mu,\nu)}^{\frac{1}{n-1}},
    \end{equation*}
    and the stronger control in Fraenkel asymmetry
    \begin{equation*}
        {\alpha(E_{\mu,p}, E_{\nu,p})}^2
        \le 
        C_{\vartheta,n,p} {\dc(\mu, \nu)}^{1 + \frac{1}{n-1}},
    \end{equation*}
    where $C_{\vartheta,n,p}$ is a constant depending only on $\vartheta$, $n$, and $p$.
\end{theorem}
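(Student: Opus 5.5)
Our approach follows the variational route sketched in the introduction for Theorem~\ref{thm.quantitative_stability_p=1}, now using the $L_p$ Brunn--Minkowski theory. For $1<p\neq n$, the $L_p$ Minkowski body $E_{\mu,p}$ is characterized by a variational problem. We maximize
\[
\mathscr{F}_{\mu,p}(K) = \frac{1}{p}\log \int_{\mathbb{S}^{n-1}} h_K^p \,\dd\mu \; - \;\frac1n\log|K|
\]
(or the equivalent form with the roles of volume and $L_p$ mixed volume swapped, depending on the sign of $p-n$) over convex bodies containing the origin in their interior. By the $L_p$ Minkowski inequality $V_p(E,K)^n\ge |E|^{n-p}|K|^p$, the optimizer is $E_{\mu,p}$ up to normalization, since $V_p(E,K)=\frac1n\int h_K^p\,\dd S_{E,p}$. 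The first step is to fix a good normalization. Using $\Theta(\mu)\ge\Theta_+(\mu)\ge\vartheta$ together with Cauchy's projection formula, as in the $p=1$ case, we obtain two-sided bounds on $|E_{\mu,p}|$, on its inradius and circumradius, and on $\min h_{E_{\mu,p}}$. These bounds depend only on $\vartheta,n,p$. The lower bound on the support function uses $\Theta_+$ directly: $\int h^{p-1}\cdots$ controls $h$ from below, and this is exactly why the half-space condition is needed rather than centering.

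The second step is the comparison. Write $E=E_{\mu,p}$ and $F=E_{\nu,p}$, rescaled to be admissible in both functionals. Then
\[
\mathscr{F}_{\mu,p}(E)-\mathscr{F}_{\mu,p}(F) \le \big[\mathscr{F}_{\mu,p}(E)-\mathscr{F}_{\nu,p}(E)\big]+\big[\mathscr{F}_{\nu,p}(F)-\mathscr{F}_{\mu,p}(F)\big],
\]
because $F$ maximizes $\mathscr{F}_{\nu,p}$. Each bracket is a difference $\int h^p\,\dd(\mu-\nu)$. Since $h_E^p$ is not a support function, we cannot apply $\dc$ to it directly. Instead we use the bounds $r\le h_E\le R$ and the concavity/convexity of $t\mapsto t^p$ to sandwich it, or we linearize the logarithm and the power around the optimum. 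This yields a gap of order $C\,\dc(\mu,\nu)$. We expect this to be the main technical obstacle: $h^p$ must be approximated by differences of support functions of bodies in a ball with controlled constants. A fallback is $\dc(\mu,\nu)\le W_1$-type Lipschitz estimates, but those would give a weaker statement. Our first attempt is to write $h^p = h\cdot h^{p-1}$ and use the first variation at the optimum, in which only $\int h_K\,\dd S_{E,p}$ appears, linear in $h_K$.

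The third step is to turn the energy gap into geometry. Here we use the quantitative $L_p$ Minkowski/Brunn--Minkowski inequalities from Section~\ref{sec.quantitativeLpBM_iso}. These give $\mathscr{F}_{\mu,p}(E)-\mathscr{F}_{\mu,p}(K)\ge c\,\alpha(E,K)^2$, which produces $\alpha^2\le C\dc$. Converting Fraenkel asymmetry to Hausdorff distance for convex bodies with bounded radii costs an exponent $1/n$: a Hausdorff displacement $\delta$ forces a symmetric difference of volume at least $c\,\delta^{n}$. From $\alpha\lesssim\dc^{1/2}$ together with $\dH^{n}\lesssim\alpha$ we would only get $\dc^{1/(2n)}$. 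So for the Hausdorff bound we instead argue directly through a Diskant-type inequality on the energy gap, which gives $\dH^{n}\lesssim$ gap $\lesssim\dc$. In the centered case, translations of $K$ do not change $\int h_K\,\dd\mu$. We can therefore run the $p=1$ argument verbatim, combining the gradient-concavity principle with Diskant's inequality in its sharper form $\dH^{n-1}\lesssim$ gap. This yields the exponents $\frac1{n-1}$ and $1+\frac1{n-1}$ exactly as in Theorem~\ref{thm.quantitative_stability_p=1}.
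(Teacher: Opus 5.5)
\paragraph{Non-centered estimates.} Your route here differs from the paper's, but it leaves open the one step that matters.

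\emph{Comparison with the paper.} The paper uses the gradient-concavity bound at the $L_p$-midpoint $\varphi_{1/2}$ together with the quantitative $L_p$ Brunn--Minkowski inequality of Lemma~\ref{lemma.quantitative_LpBM}. You instead compare the scale-invariant quotients $G_\mu(K)=\int h_K^p\,\dd\mu/|K|^{p/n}$ at $E=E_{\mu,p}$ and $F=E_{\nu,p}$. Since $G_\mu(F)/G_\mu(E)-1=\delta_{\mathrm{ISO},p}(E,F)$ (with $V_p(E,E)=|E|$), Lemma~\ref{lemma.quantitative_Lp_isoperimetric} applies directly. This needs neither Lutwak's first-variation formula nor $L_p$ concavity. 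Note that $G_\mu$ is minimized, not maximized, at $E$ for every $p\ge1$; this sign slip is harmless.

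\emph{The gap.} You flag the bound on $\int h_K^p\,\dd(\mu-\nu)$ as the main obstacle but do not resolve it. The tool you suggest is not available: for $1<p<n$ there is no lower bound on $\min h_{E_{\mu,p}}$, because the origin can lie on $\partial E_{\mu,p}$. That is exactly why Theorem~\ref{thm.existence_Minko_Lp} only requires $o\in K$ and $S_K(\{h_K=0\})=0$.

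\emph{The fix.} No lower bound on $h$ is needed. If $o\in K$ and $h_K\le R$, then $h_K^p+(p-1)R^p$ is itself a support function. Indeed, for smooth $h\ge0$ (spherical Hessian)
\[
\nabla^2(h^p)+h^pI=p\,h^{p-1}\bigl(\nabla^2h+hI\bigr)+p(p-1)\,h^{p-2}\,\nabla h\otimes\nabla h-(p-1)\,h^pI\ \ge\ -(p-1)R^p\,I,
\]
and general $K$ follow by approximating from outside. The corresponding body lies in $B_{pR^p}(0)$. Constants are invisible to $\mu-\nu$, so $|\int h_K^p\,\dd(\mu-\nu)|\le pR^p\,\dc(\mu,\nu)$.

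\emph{Finishing the non-centered case.} With the bounds of Section~\ref{sec.technical_estimates_p>1} this gives $\delta_{\mathrm{ISO},p}(E,F)\lesssim\dc(\mu,\nu)$, hence $\alpha^2\lesssim\dc$. The Diskant part of Lemma~\ref{lemma.quantitative_Lp_isoperimetric} then gives $\inf_x\dH(E,x+\lambda F)^n\lesssim\dc$. You still have to remove $\lambda$. This follows from $|\min G_\mu-\min G_\nu|\lesssim\dc$, i.e. $\bigl||E|^{\frac{n-p}{n}}-|F|^{\frac{n-p}{n}}\bigr|\lesssim\dc$, together with $p\neq n$. The paper needs the same estimate, in Lemma~\ref{lemma.variational_problem_Lp}(3) and where it replaces $\varphi_{\mu,p}^p-\varphi_{\nu,p}^p$ by $\varphi_{\mu,p}-\varphi_{\nu,p}$, and passes over it. So this observation repairs that point in both arguments.

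\paragraph{Centered case.} Your argument for the improved exponents does not go through.

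\emph{Why your argument fails.} Translation invariance of $\int h_K\,\dd\mu$ does not help, because your energy, like the paper's, involves $\int h_K^p\,\dd\mu$, and $(h_K+\inner{x_0,\cdot})^p-h_K^p$ is not linear. There is also no ``sharper Diskant inequality'' $\dH^{n-1}\lesssim$ deficit. Diskant's exponent $n$ cannot be improved: cutting a corner of depth $t$ off a cube gives deficit of order $t^n$ and Hausdorff distance of order $t$.

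\emph{The paper's argument does not help.} In the paper, the exponent $\frac1{n-1}$ comes from feeding a reinforced bound $\delta\lesssim\dc(\mu,\nu)\inf_{x_0}\dH(E_\mu,x_0+E_\nu)$ into $\dH^n\lesssim\delta$. The centered $L_p$ argument copies that display with $h$ where $h^p$ should stand, so it cannot be borrowed.

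\emph{The reinforced bound itself fails, already for $p=1$.} In the plane, let $E_\mu$ be the disc of radius $a=1/(2\pi)$ and $h_{E_\nu}=a+d\cos(k\theta)$ with $k\ge2$ and $d(k^2-1)=a/2$.
\begin{itemize}
\item Both surface measures are centered probability measures with density at least $a/2$.
\item Integrating by parts, $\dc(\mu,\nu)=d\sup_{K\subseteq B_1}\bigl|\int\cos(k\theta)\,\dd S_K\bigr|\le2\pi d$, and $\dH\le d$.
\item Yet the Brunn--Minkowski and isoperimetric deficits of $(E_\mu,E_\nu)$ are both of order $d$, not $d^2$.
\end{itemize}
Hence no argument of the type ``energy gap $\lesssim\dc\cdot\dH$'' can work.

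\paragraph{Conclusion.} With the fix above, your proposal proves the first pair of estimates. The improved exponents for centered $\mu,\nu$ require a genuinely new idea.
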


Concerning the structure of the paper, Section~\ref{sec.preliminaries} introduces the main notions used in the paper, as well as the control of the geometry of convex bodies with the functional $\Theta$ applied to its surface measures and $L_p$ surface measures. The stability versions Theorem~\ref{thm.quantitative_stability_p=1} of the classical Minkowski problem is proved in Section~\ref{sec.case_p1}, and Theorem~\ref{thm.quantitative_stability_p} of the $L_p$-Minkowski problem for $1< p \neq n$ is proved in Section~\ref{secp>1}.

\section*{Acknowledgments}
B\"or\"oczky was partially supported by NKKP grant 150613. J.M.M. would like to thank Guillaume Carlier and Quentin Mérigot for stimulating discussions on Minkowski problems. He also warmly thanks the hospitality of IMPA, where the majority of this work was prepared while visiting J.P.G.R. during the summer program. The research of J.P.G.R. is supported by the Portuguese government through FCT - Fundação para a Ciência e a Tecnologia, I.P., project 2023.17881.ICDT with DOI identifier 10.54499/2023.17881.ICDT (project SHADE). 

\section{Preliminaries on convex bodies and Minkowski problems}\label{sec.preliminaries}

By a convex body $K \subset \mathbb{R}^n$ we mean a closed and convex subset of $\mathbb{R}^n$ with non-empty interior. 
We say that a convex body $K$ is centered at a certain point $x_0$ whenever its barycenter lies that $x_0$. Simply, we say $K$ is centered when its barycenter is the origin, that is if $\displaystyle \int_K x \dd x = 0$. Convex bodies are uniquely determined by their support function, given by
\begin{equation}\label{eq.support_function}
    h_K(x) \eqdef \sup_{y \in K} x \cdot y, \text{ for $x \in \mathbb{S}^{n-1}$,}
\end{equation}
and then extended uniquely to a $1$-homogenous function. It can be easily checked that $K = \left\{ x \in \mathbb{R}^n: x\cdot y \le h_K(y) \text{ for all } y \in \mathbb{S}^{n-1} 
\right\}$, so it encodes many of its geometric properties.

A central object to the present work and to the Brunn-Minkowski theory of convex bodies is the surface area measure. Given a convex body $K$, we can define its surface measure as the pushforward of the $(n-1)$-dimensional Hausdorff measure on $\partial K$ through the Gauss map $\nu_K : \partial K \mapsto \mathbb{S}^{n-1}$, that is 
\[
    S_K \eqdef {(\nu_K)}_\sharp \mathcal{H}^{n-1}\mres \partial K. 
\]
Equivalently, for a given Borel set $\omega \subset \mathbb{S}^{n-1}$ it can be written as 
\[
    S_K(\omega) = \mathcal{H}^{n-1}(\nu_K^{-1}(\omega)). 
\] 
At any point $x\in\partial K$ such that the normal vector is unique, it coincides with $\nu_K(x)$, so that a surface area measure $S_K$ can be viewed as the distribution of normal vectors associated with the convex body $K$. 

These measures naturally arise as the first variation of the volume functional, or the mixed volumes of convex bodies, see for instance~\cite{boroczky2025isoperimetric}. In particular, if $K$ and $L$ are convex bodies, then the following identity holds
\begin{equation}\label{eq.first_variation_volume} 
    \frac{\dd}{\dd t} \Big|_{t=0} |K + tL| 
    =
    \int_{\mathbb{S}^{n-1}} h_L(x) \dd S_K(x).
\end{equation}
and we define the mixed volume of $K$ and $L$ as
\begin{equation}
    V_1(K,L) 
    \eqdef 
    \frac{1}{n} \int_{\mathbb{S}^{n-1}} h_L(x) \dd S_K(x).
\end{equation}
Choosing $K=L$, we obtain that $V(K,K) = |K|$, and it follows from the Brunn-Minkowski inequality~\eqref{eq.Brunn-Minkowski} that 
\begin{equation}\label{eq.Wulff_inequality}
    V_1(K,L) \ge {|K|}^{\frac{n-1}{n}}{|L|}^{\frac{1}{n}}.
\end{equation}
This identity is known by many names, such as the \textit{Wulff isoperimetric inequality}, or the \textit{anisotropic isoperimetric inequality}, and it can be seen as a generalization of the classical isoperimetric inequality, which corresponds to the case $L = B_1(0)$, where $B_1(0)$ is the unit ball centered at the origin.

These identities are fundamental tools to control the geometry of convex bodies, which is done by introducing some quantitative measures of their size. For instance, defining the \textit{inradius} and \textit{circumradius} of a centered convex body $K$ as 
\begin{equation}\label{eq.in_circum_radii}
        r_K
        \eqdef 
        \sup
        \left\{
            r > 0 : B_r(0) \subset K        
        \right\}, \
        R_K 
        \eqdef 
        \inf
        \left\{
            r > 0 : K \subset B_r(0)        
        \right\},
\end{equation}
if follows, whenever $0 \in \intt K$, that $0 < r_K \le h_K \le R_K$. 

A fundamental tool to control the these quantities is the John ellipsoid, which is the unique ellipsoid of maximal volume contained in $K$. It can be shown that if $E$ is the John ellipsoid of $K$, then
$
    E \subset K \subset n E.
$ Letting $a_1 \le \dots \le a_n$ be the half-axes of $E$, ordered in ascending order so that $a_1$ is the smallest half-axis, and $a_n$ the largest one, we have that 
\[
        \frac{r_K}{n} \le a_1 \le r_K \text{ and } a_n \le R_K \le n a_n.
\]

The rest of this section is dedicated to obtaining upper bounds on $R_K$ and lower bounds on $r_K$ based on estimates of the form $\Theta(\mu) \ge \vartheta$, where $\mu$ is a geometric measure associated with a convex body $K$, for instance the usual surface measure $S_K$ discussed above, or the $L_p$-surface measure, relevant to the $L_p$-Minkowski problem, for $p>1$, see Section~\ref{sec.technical_estimates_p>1}. 

\subsection{Inner and circumradii estimates with $\Theta(S_K)$}\label{sec.technical_estimates_p=1}

As discussed in the introduction, the functional defined over the set of Radon measures on the sphere 
\[
    \mathscr{M}(\mathbb{S}^{n-1}) \ni \mu 
    \mapsto \Theta(\mu) 
    \eqdef 
    \inf_{\theta \in \mathbb{S}^{n-1}} 
    \int_{\mathbb{S}^{n-1}} 
    |\theta\cdot x|\dd \mu(x)
\]
can be used to obtain quantitative estimates on the geometry of a convex body $K$. Indeed, thanks to the Cauchy projection formula, we have that given $\theta \in \mathbb{S}^{n-1}$, let ${\proj}_{\theta^\perp}$ denote the projection operator onto the hyperplane orthogonal to $\theta$, then it holds the following lower bound on the projection area of $K$ onto $\theta^\perp$:
\begin{equation}\label{eq.projection_area_lower_bound}
    \mathcal{H}^{n-1}({\proj}_{\theta^\perp}(K)) 
    =
    \int_{\partial K} 
    |\theta\cdot \nu_K|\dd \mathcal{H}^{n-1} 
    =
    \int_{\mathbb{S}^{n-1}} 
    |\theta\cdot x|\dd S_K(x)
    \ge \Theta(S_K).
\end{equation}
This can be used to bound the John ellipsoid of $K$ and hence to obtain estimates on the inner and circumradii of $K$ in terms of $\Theta(S_K)$ and $S(K)$.
\begin{lemma}\label{lemma.rR}
For a convex body $K$ in $\R^n$ there exist dimensional constants $C_n,c_n > 0$ such that
\[
    c_n
    \frac{\Theta(S_K)}{{S(K)}^{\frac{n-2}{n-1}}} \le r_K \leq R_K
    \le 
    C_n 
    \frac{S(K)}{{\Theta(S_K)}^{\frac{n-2}{n-1}}},
\] 
where $c_n = 1/{2{n}^{n^{(n-2)/(n-1)}}}$ and $C_n = \frac{n}{2}{n}^{n^{(n-2)/(n-1)}}$.
\end{lemma}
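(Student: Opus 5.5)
The plan is to reduce everything to the half-axes $a_1\le\dots\le a_n$ of the John ellipsoid $E$ of $K$, using $E\subset K\subset nE$ (after translating so that $E$ is centered at the origin) together with the Cauchy projection formula \eqref{eq.projection_area_lower_bound}. Write $\theta_i$ for the unit direction of the $i$-th axis and $\omega_{n-1}$ for the volume of the unit $(n-1)$-ball. The projection of an ellipsoid onto $\theta_i^\perp$ is an $(n-1)$-dimensional ellipsoid with half-axes $\{a_j\}_{j\neq i}$. Hence, by monotonicity of projections under $E\subset K\subset nE$,
\[
\omega_{n-1}\prod_{j\ne i}a_j\;\le\;\mathcal{H}^{n-1}(\proj_{\theta_i^\perp}K)\;\le\; n^{n-1}\omega_{n-1}\prod_{j\ne i}a_j .
\]
I will use two consequences of this. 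First, since the surface area dominates every projection area, taking $i=1$ gives $S(K)\ge \omega_{n-1}\,a_2\cdots a_n$. Second, since $\Theta(S_K)$ is an infimum over directions, taking $\theta=\theta_n$ gives $\Theta(S_K)\le n^{n-1}\omega_{n-1}\,a_1\cdots a_{n-1}$.

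\textbf{Upper bound on $R_K$.} Since $a_1$ is the smallest axis, it is at most the geometric mean of $a_2,\dots,a_{n-1}$. Therefore
\[
a_1\cdots a_{n-1}\le (a_2\cdots a_{n-1})^{\frac{n-1}{n-2}},
\]
and combining with the upper bound on $\Theta$ yields $a_2\cdots a_{n-1}\ge (\Theta(S_K)/(n^{n-1}\omega_{n-1}))^{\frac{n-2}{n-1}}$. Inserting this into $S(K)\ge\omega_{n-1}a_n\,(a_2\cdots a_{n-1})$ gives
\[
a_n\lesssim S(K)/\Theta(S_K)^{\frac{n-2}{n-1}},
\]
and $R_K\le n a_n$ finishes this half.

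\textbf{Lower bound on $r_K$.} Symmetrically, $a_n$ is at least the geometric mean of $a_2,\dots,a_{n-1}$. So $S(K)\ge \omega_{n-1}(a_2\cdots a_{n-1})^{\frac{n-1}{n-2}}$, i.e. $a_2\cdots a_{n-1}\lesssim S(K)^{\frac{n-2}{n-1}}$. Plugging this into $\Theta(S_K)\le n^{n-1}\omega_{n-1}a_1(a_2\cdots a_{n-1})$ gives
\[
a_1\gtrsim \Theta(S_K)/S(K)^{\frac{n-2}{n-1}},
\]
and $r_K\ge a_1$ since $E\subset K$. In dimension $n=2$ the products $a_2\cdots a_{n-1}$ are empty and the exponents vanish, so the argument is direct there: $\Theta\lesssim a_1$ and $S\gtrsim a_2$.

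\textbf{Main obstacle.} The delicate point is not the inequalities but the normalization of the center. The radii in \eqref{eq.in_circum_radii} are defined with balls centered at the origin, for a centered body, whereas the containments above are about the John center. I would handle this by noting that the barycenter of $K$ lies in $K\subset nE$ and, by the classical fact that a convex body contains a homothetic copy (ratio $1/n$, about the barycenter) of its reflection, it lies deep inside. Concretely, $B_{a_1/n}(\text{barycenter})\subset K$ and $K\subset B_{2na_n}(\text{barycenter})$, which only changes the dimensional constants. The remaining work is tracking $\omega_{n-1}$ and the powers of $n$ to arrive at the explicit $c_n,C_n$ stated. If the stated constants do not come out exactly, I would regard the form of the estimate, with some dimensional constants, as the substance of the lemma.
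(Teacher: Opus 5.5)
Your proof is correct and is essentially the paper's argument. Both use the John ellipsoid with $E\subset K\subset nE$ and Cauchy projections orthogonal to the largest and smallest axes, giving $\Theta(S_K)\lesssim a_1\cdots a_{n-1}$ and $S(K)\gtrsim a_2\cdots a_n$. The only difference is how you eliminate the middle axes: you compare them directly with $a_1$ and $a_n$, whereas the paper first derives $R_K/r_K\lesssim S(K)/\Theta(S_K)$ and substitutes it back.

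Two of your choices are in fact cleaner than the paper's. The paper silently measures the radii from the John center, while $r_K,R_K$ are defined, and later used, about the barycenter; your centering step handles this. Your projected-ellipsoid area $\omega_{n-1}\prod_{j\ne i}a_j$ is also more accurate than the paper's box $Q_1$.

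The one slip is inherited from \eqref{eq.projection_area_lower_bound}: in fact $\int_{\mathbb{S}^{n-1}}|\theta\cdot x|\,\dd S_K(x)=2\,\mathcal{H}^{n-1}(\proj_{\theta^\perp}K)$. This costs a factor $2$ in your bound on $\Theta(S_K)$ and affects only the constants.
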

\begin{proof}
For simplicity of notation let $r=r_K$ and $R=R_K$ denote the inner and circumradii of $K$. Since translations of the convex body do not change the surface measure, we may assume that the John ellipsoid $E$ of $K$ is centered at the origin. Let $a_1\leq\cdots\leq a_n$ denote the length of the half axes of $E$. Since $E\subset K\subset nE$, we have 
\[
    (r/n)\leq a_1\leq\cdots\leq a_n\leq R \mbox{ \ and \ }a_1\leq r.
\]

Choosing the direction $\theta_n$ associated with the largest half-axis $a_n$, from the inclusion the inclusion $K\subseteq nE$ we have that $\proj_{\theta_n^\perp}(K) \subset Q_n \eqdef n\left([-a_1, a_1]\times \cdots\times [-a_{n-1}, a_{n-1}]\right)$. Hence, it follows from the monotonicity of the perimeter that 
\begin{align*}
    \Theta(S_K) 
    &\le 
    \mathcal{H}^{n-1}({\proj}_{\theta_n^\perp}(K)) 
    \le 
    \mathcal{H}^{n-1}
    \left(
        Q_n
    \right)\\
    &= 
    {(2n)}^{n-1}a_1\cdots a_{n-1}
    \le 
    {(2n)}^{n-1}r R^{n-2}.
\end{align*}
On the other hand, choosing the direction $\theta_1$ associated with the smallest half-axis $a_1$, the inclusion $E\subseteq K$ gives that $Q_1 \eqdef n\left( [-a_2, a_2]\times\cdots\times [-a_n,a_n]\right)$. So recalling that $r/n \le a_i$ for all $i=2,\dots,n$, and $R/n \le an$, it follows again from the monotonicity of the perimeter that
\begin{align*}
    {\left(\frac{2}{n}\right)}^{n-1} r^{n-2}R
    &\le 
    2^{n-1}a_2\cdots a_n 
    =
    \mathcal{H}^{n-1}\left(Q_1\right)
    \le
    \mathcal{H}^{n-1}\left({\proj}_{\theta_1^\perp}(K)\right) 
    \le
    S(K).
\end{align*}
Dividing these identities, using that $r \le a_1$, and $R \le a_n/n$, we obtain that 
\[  
    1 \le 
    \frac{R}{r} 
    \le 
    \frac{na_n}{a_1} 
    = 
    \frac{n a_n\cdots a_2}{a_1\cdots a_{n-1}} 
    \le 
    n^n
    \frac{S(K)}{\Theta(S_K)}.
\]

Plugging this estimate back into the previous inequalities, we obtain the conclusion. 
\end{proof} 

\subsection{On the $L_p$-Minkowski problem for $p>1$}\label{sec.technical_estimates_p>1}
Given a convex body $K$ in $\R^n$ with $o\in K$, we can also define the \textit{$L_p$-surface measures} as 
\begin{equation}
    \dd S_{K,p}=h_K^{1-p} \dd S_K. 
\end{equation}
Letting $\partial' K$ denote the set of ``regular'' points $x\in \partial K$ where there exists a unique exterior unit normal $\nu_K(x)\in \mathbb{S}^{n-1}$, we have that for each $x \in \partial' K$ and $y\in K$
\begin{equation}
\label{nuKx-def}
h_K(x) = x\cdot \nu_K(x)\geq y \cdot \nu_K(x).
\end{equation}
It also holds that $\mathcal{H}^{n-1}(\partial K\backslash \partial' K)=0$, and as a result, if $\omega\subset \mathbb{S}^{n-1}$ is Borel, the $L_p$-surface measure of $\omega$ can be written as
\begin{equation}
\label{Lp-surf-boundary}
S_{K,p}(\omega)=\int_{\nu_K^{-1}(\omega)}\big(x\cdot \nu_K(x)\big)^{1-p}\,d\mathcal{H}^{n-1}(x).
\end{equation}

The basic tool to deal with $L_p$ surface measures is Firey's $L_p$ Brunn-Minkowski theory~\cite{firey1962p}. It is strongly based on the equivalence between a convex body $K$ and its support function $h_K$. In analogy with the identity $h_{K + L} = h_K + h_L$ from the $p = 1$ case, given two convex bodies $K,L$ containing the origin, and two positive numbers $t,s > 0$, one can define their $L_p$ sum $t\cdot K +_p s\cdot L$ as the convex body whose support function is given by
\begin{equation}
    h_{t\cdot K +_p s\cdot L} \eqdef 
    {\left(
        t h_K^p + s h_L^p
    \right)}^{1/p}.
\end{equation}
Notice that, $(K,L,t,s) \mapsto t\cdot K +_p s \cdot L$ is operation depending on the specific choice of each variable, we cannot replace $t\cdot K$ by the dilated body $tK$ in the usual sense. Defining the support function $h_{t\cdot K +_p s\cdot L}$ as above is equivalent to defining the $L_p$ Minkowski sum as 
\begin{equation}
    t\cdot K +_p s\cdot L
    \eqdef 
    \left\{
        x \in \mathbb{R}^n: 
        \inner{x,u}
        \le 
        {\left( t h_K^p(u) + s h_L^p(u) \right)}^{1/p}, \text{ for all }
        u \in \mathbb{S}^{n-1}
    \right\}
\end{equation}

Leveraging the classical Brunn-Minkowski inequality, in the case $p = 1$, it can be shown that a suitable power of the volume functional is concave with respect to the $L_p$ convex combination. Namely, for any pair of convex bodies $K,L$ containing the origin, and $t,s\ge 0$, it holds that 
\begin{equation}\label{eq.LpBM}
    {|t\cdot K +_p s \cdot L|}^{p/n} \ge 
    t{|K|}^{p/n} + s{|L|}^{p/n}. 
\end{equation}

If before, the first variation of the volume functional, with variations generated by the usual Minkowski sum, gave rise to the usual surface measure $S_K$, performing $L_p$ variations the first variation of the volume functional gives rise to the $L_p$ surface measure $S_{K,p}$. More precisely, for any pair of convex bodies $K,L$ containing the origin, and any $t > 0$, it holds that
\begin{equation}\label{eq.first_variation_volume_Lp}
    \frac{\dd}{\dd t} \Big|_{t=0} |K +_p t\cdot L| 
    =
    \int_{\mathbb{S}^{n-1}} h_L^p(x) \dd S_{K,p}(x).
\end{equation}

Mimicking the definition of the mixed volume, we can define the $L_p$-mixed volume of $K$ and $L$ as
\begin{equation}
    V_p(K,L)
    \eqdef
    \frac{p}{n} \int_{\mathbb{S}^{n-1}} h_L^p(x) \dd S_{K,p}(x).
\end{equation}
As a consequence, we have the analogous volume formula in terms of the $L_p$ surface area measure 
\begin{equation}
    \int_{\mathbb{S}^{n-1}} h_K^p(x) \dd S_{K,p}(x) = \frac{n}{p} |K|,
\end{equation}
and with a similar argument from the $p=1$ case using the $L_p$ Brunn-Minkowski inequality, the $L_p$-Wulff isoperimetric inequality also holds. Namely, for any pair of convex bodies $K,L$ containing the origin, it holds that
\begin{equation}\label{eq.LpWulff}
    V_p(K,L) \ge {|K|}^{\frac{n-p}{n}}{|L|}^{\frac{p}{n}}.
\end{equation}

The $L_p$ Minkowski problem for $p>1$ then becomes the question of existence and uniqueness of a convex body $K$ such that $S_{K,p} = \mu$, for a given measure $\mu \in \mathscr{P}(\mathbb{S}^{n-1})$. Differently from the case $p=1$, the $L_p$ surface measure is not translation invariant. So we cannot assume $o\in{\rm int}\, K$, and we have to consider the case $o\in\partial K$. As a result, for the condition $d\mu=h_K^{1-p}\,dS_K$ to make sense, we must check that if $o\in \partial K$, then $S_K(\{h_K=0\})=0$. 

Readily, the following conditions are equivalent for a convex body $K\subset \mathbb{R}^n$ with $o\in\partial K$:
\begin{itemize}
\item $S_K(\{h_K=0\})=0$;
\item $S_K(N_K(o)\cap \mathbb{S}^{n-1})=0$, where $N_K(o)=\{y\in\mathbb{R}^n:y\cdot x\leq 0\mbox{ for }x\in K\}$ is the exterior normal cone at $o\in\partial K$;
\item $\mathcal{H}^{n-1}(\{x\in \partial' K:\nu_K(x)\cdot x=0\})=0$.
\end{itemize}

\noindent{\bf Remark. } The condition $S_K(\{h_K=0\})=0$ is a consequence if we write the Monge-Amp\`ere equation in the form
$$
\dd S_K=h_K^{p-1}\,\dd \mu,
$$
or in the traditional form
$$
\det(\nabla^2 h+hI)=h^{p-1}f.
$$

In particular, the solution of the $L_p$-Minkowski problem for $p>1$ reads as follows. 

\begin{theorem}\label{thm.existence_Minko_Lp}
Let $\mu$ be a finite Borel measure on $\mathbb{S}^{n-1}$. Then $\mu=S_{K,p}$; namely, the $L_p$ surface area measure of a convex body $K$ with $o\in K$ and $S_K(\{h_K=0\})=0$ if and only if $\mu$ is not concentrated on any closed hemisphere.
\end{theorem}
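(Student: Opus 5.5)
Necessity and sufficiency are handled separately. The existence part goes through a variational problem whose constraint encodes the hemisphere condition through $\Theta_+$.

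\emph{Necessity.} Suppose $\mu=S_{K,p}$ with $o\in K$ and $S_K(\{h_K=0\})=0$. Fix $\theta\in\mathbb{S}^{n-1}$ and suppose, for contradiction, that $\mu$ is concentrated on $\{x\cdot\theta\ge 0\}$. Then $S_K$ is concentrated on $\{x\cdot\theta\ge0\}\cup\{h_K=0\}$, and the second set is $S_K$-null. Hence $\int (x\cdot\theta)_-\,\dd S_K=0$. Since $\int x\,\dd S_K=0$, we get $\int|x\cdot\theta|\,\dd S_K=2\int (x\cdot\theta)_-\,\dd S_K=0$. By Cauchy's formula \eqref{eq.projection_area_lower_bound}, the projection of $K$ onto $\theta^\perp$ then has zero area. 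This contradicts $K$ having nonempty interior.

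\emph{Sufficiency.} Assume $\mu$ is not concentrated on any closed hemisphere. By weak compactness and lower semicontinuity of $\theta\mapsto\int(\theta\cdot x)_+\,\dd\mu$, this gives $\Theta_+(\mu)>0$. I will maximize $|K|$ over convex bodies $K\ni o$ under the constraint $\int h_K^p\,\dd\mu=1$.

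\emph{A priori bound.} For $K\ni o$ and any $y\in K$, we have $h_K(x)\ge (x\cdot y)_+$. Hence
\[
1=\int h_K^p\,\dd\mu\ge \mu(\mathbb{S}^{n-1})^{1-p}\Big(\int h_K\,\dd\mu\Big)^p\ge \mu(\mathbb{S}^{n-1})^{1-p}\big(|y|\,\Theta_+(\mu)\big)^p.
\]
So the admissible class lies in a fixed ball.

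\emph{Existence of a maximizer.} By Blaschke selection, a maximizing sequence has a Hausdorff limit $K_0\ni o$. Uniform convergence of support functions preserves the constraint and the volume. Since scaled small balls around $o$ are admissible, the supremum is positive, so $K_0$ has nonempty interior.

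\emph{Euler–Lagrange equation.} For $K_0$ I would use Aleksandrov's variational lemma. For continuous $g$, consider the Wulff shape $K_t$ of $(h_{K_0}^p+tg)^{1/p}$, or rather of $h_{K_0}+t g$ when $h_{K_0}$ vanishes somewhere. This gives $\frac{\dd}{\dd t}|K_t|=\int g\,\dd S_{K_0}$. Lagrange multipliers then yield $S_{K_0}=\lambda h_{K_0}^{p-1}\mu$ as measures, with $\lambda>0$.

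\emph{Main obstacle.} The hardest step is the case $o\in\partial K_0$, where $h_{K_0}^{1-p}$ blows up and the natural $L_p$ variation is not differentiable. I will first test with perturbations $g$ supported away from $\{h_{K_0}=0\}$. There $h_{K_0}^{p-1}>0$, so the multiplier identity holds on $\{h_{K_0}>0\}$. Next I need $S_{K_0}(\{h_{K_0}=0\})=0$. If this mass were positive, I would move $K_0$ slightly by a translation $-\varepsilon v$ with $v$ in the interior of $K_0$. The first-order gain in $\int h^p\,\dd\mu$ is of order $\varepsilon^p$, which is superlinear since $p>1$. Meanwhile the volume is unchanged, and the normalization can absorb the difference, so this contradicts maximality. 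The Remark preceding the theorem suggests this is exactly the point that needs care.

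\emph{Conclusion.} With $S_{K_0}=\lambda h_{K_0}^{p-1}\mu$ established, the $L_p$ surface measure has homogeneity $S_{tK,p}=t^{n-p}S_{K,p}$. Since $p\ne n$, rescaling $K=tK_0$ gives $S_{K,p}=\mu$.
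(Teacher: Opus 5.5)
The paper gives no proof of this theorem; it quotes it from the literature (Chou--Wang, Hug--Lutwak--Yang--Zhang), so your direct variational argument has to stand on its own.

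\textbf{What is fine.} Necessity, the a priori bound and the existence of a maximizer are correct. The Euler--Lagrange identity on $U=\{h_{K_0}>0\}$ is also fine, provided ``Lagrange multipliers'' is replaced by the usual comparison. For $g$ compactly supported in $U$ and $|t|$ small, $h_t=h_{K_0}+tg\ge 0$. Hence the Wulff shape $[h_t]$ contains $o$ and $0\le h_{[h_t]}\le h_t$. So $t\mapsto |[h_t]|\,(\int h_t^p\,\dd\mu)^{-n/p}$ is maximal at $t=0$, which gives $S_{K_0}=\lambda h_{K_0}^{p-1}\mu$ on $U$ with $\lambda=n|K_0|$.

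\textbf{The translation step has a gap.} The only effect you identify is an $O(\varepsilon^p)$ increase of $\int h^p\,\dd\mu$ coming from $Z=\{h_{K_0}=0\}$, with the volume unchanged. That goes in the wrong direction and yields no contradiction. The contradiction must come from a strictly negative first-order term on $U$, which you never exhibit. For $v$ in the interior of $K_0$ one has $u\cdot v<0$ for every $u\in Z$. The right derivative at $\varepsilon=0$ of $\varepsilon\mapsto\int h_{K_0-\varepsilon v}^p\,\dd\mu$ is
\[
-p\int_U h_{K_0}^{p-1}\,(u\cdot v)\,\dd\mu(u)=-\frac{p}{\lambda}\int_U u\cdot v\,\dd S_{K_0}(u)=\frac{p}{\lambda}\int_Z u\cdot v\,\dd S_{K_0}(u)<0
\]
whenever $S_{K_0}(Z)>0$, by Minkowski's relation $\int u\,\dd S_{K_0}(u)=0$. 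The $O(\varepsilon^p)$ contribution from $Z$ is then negligible because $p>1$. The closure relation and the sign of $u\cdot v$ on the normal cone are the missing ideas.

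\textbf{The final identification does not follow.} Once repaired, you get $S_{K_0}=\lambda h_{K_0}^{p-1}\mu$ on all of $\mathbb{S}^{n-1}$. After rescaling, this is the weak equation $\dd S_K=h_K^{p-1}\,\dd\mu$; the rescaling is where $p\neq n$ enters, which you rightly use although the statement omits it. But your claim $S_{K,p}=\mu$ does not follow. The measure $h_K^{1-p}S_K$ is only the restriction of $\mu$ to $\{h_K>0\}$, and you never show $\mu(\{h_K=0\})=0$. This cannot be shown in general:
\begin{itemize}
\item Take $n=2$, $1<p<3/2$, and let $K$ be the unit disc centred at $e_2$. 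Then $h_K(u)=1+u\cdot e_2$ vanishes only at $-e_2$, quadratically, so $h_K^{1-p}S_K$ is finite.
\item Put $\mu=h_K^{1-p}S_K+\delta_{-e_2}$. This $\mu$ is not concentrated on any closed hemisphere, and $h_K^{p-1}\mu=S_K$.
\item Weak solutions are unique for $p\neq n$, by H\"older's inequality combined with Minkowski's first inequality.
\item Any $L$ with $\mu=h_L^{1-p}S_L$ and $S_L(\{h_L=0\})=0$ would be a weak solution, hence $L=K$. That is impossible because $\mu$ has an atom at $-e_2$.
\end{itemize}
So your argument, once fixed, proves exactly the weak form $\dd S_K=h_K^{p-1}\,\dd\mu$. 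That is the reading suggested by the Remark preceding the theorem, and the statement is only true in that sense.
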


The hypothesis that $\mu$ is not concentrated on any closed hemisphere is equivalent saying that
\begin{equation}\label{eq.Theta_plus}
    \Theta_+(\mu) 
    \eqdef 
    \inf_{\theta \in \mathbb{S}^{n-1}}\int_{\mathbb{S}^{n-1}}{(\theta\cdot v)}_+\,dv>0.
\end{equation}
This restriction is strictly stronger than asking for a lower bound on the functional $\Theta(\mu)$, defined in~\eqref{eq.Theta_functional}. Indeed, if $\Theta_+(\mu) \ge \vartheta$, then $\Theta(\mu) \ge 2\vartheta$, since we can write 
\[
    \int_{\mathbb{S}^{n-1}} |\theta \cdot v| \dd \mu(v)
    = 
    \int_{\mathbb{S}^{n-1}} {(\theta \cdot v)}_+ \dd \mu(v)
    + 
    \int_{\mathbb{S}^{n-1}} {((-\theta) \cdot v)}_+ \dd \mu(v) 
    \ge 2\vartheta.
\]
As a result, the class of measures that admit a solution to the $L_p$-Minkowski problem for $p>1$ is strictly smaller than for the $p = 1$ case. On the other hand, obtaining bounds on the inner and circumradii of a convex body $K$ in terms of $\Theta(S_{K,p})$ is stronger than obtaining such bounds in terms of $\Theta_+(S_{K,p})$.

The following Lemma about the functionals $\Theta$ and $\Theta_+$ will be useful in the sequel.
\begin{lemma}\label{lemma.Theta_plus_Theta}
    Let $\mu$ be a finite Borel measure on $\mathbb{S}^{n-1}$. Define the sets 
    \[
        \Omega_{\theta,\delta}=\{v\in \mathbb{S}^{n-1}: |\theta\cdot v| > \delta\}.
    \] 
    Then, if $\Theta(\mu) > 0$, it holds that $\mu(\Omega_{\theta,\delta}) > \delta$ for all $\theta \in \mathbb{S}^{n-1}$, where $\delta$ is given by $\delta = \min\{\frac{\Theta(\mu)}{2\mu(\mathbb{S}^{n-1})},\frac{\Theta(\mu)}{2}\}$.
\end{lemma}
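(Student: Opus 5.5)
The plan is a direct splitting argument for the integral defining $\Theta(\mu)$. Fix $\theta \in \mathbb{S}^{n-1}$ and write $M \eqdef \mu(\mathbb{S}^{n-1})$. Decompose the sphere into $\Omega_{\theta,\delta}$ and its complement $\{|\theta\cdot v|\le \delta\}$. Using $|\theta\cdot v|\le 1$ on $\Omega_{\theta,\delta}$ and $|\theta\cdot v|\le\delta$ on the complement, we get
\[
    \Theta(\mu) \le \int_{\mathbb{S}^{n-1}} |\theta\cdot v|\dd\mu(v)
    \le \mu(\Omega_{\theta,\delta}) + \delta\, \mu\big(\mathbb{S}^{n-1}\setminus \Omega_{\theta,\delta}\big)
    \le \mu(\Omega_{\theta,\delta}) + \delta M.
\]
The first inequality is simply the definition of $\Theta$ as an infimum over $\theta$.

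Next we use the choice of $\delta$. Since $\delta \le \Theta(\mu)/(2M)$, we have $\delta M \le \Theta(\mu)/2$, and therefore
\[
    \mu(\Omega_{\theta,\delta}) \ge \Theta(\mu) - \delta M \ge \frac{\Theta(\mu)}{2}.
\]
It remains to compare $\Theta(\mu)/2$ with $\delta$. Since $\delta \le \Theta(\mu)/2$, this already gives $\mu(\Omega_{\theta,\delta}) \ge \delta$, uniformly in $\theta$.

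The only delicate point is upgrading this to the strict inequality claimed in the statement. The first estimate $\mu(\Omega_{\theta,\delta})\le\mu(\Omega_{\theta,\delta})$ is an equality only if $|\theta\cdot v|=1$ $\mu$-a.e.\ on $\Omega_{\theta,\delta}$. The second is an equality only if $|\theta\cdot v|=\delta$ $\mu$-a.e.\ on the complement, and $\Theta(\mu)/(2M)=\delta$ must also hold, so there may be genuine equality cases. I would handle this in one of two ways. One option is to note that the intended use only needs a lower bound of the same order, so $\ge\delta$ suffices. Alternatively, to get strictness, I would run the same argument with $\delta$ replaced by a slightly smaller $\delta'<\delta$. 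Because $\Omega_{\theta,\delta}$ increases as $\delta$ decreases, we have $\mu(\Omega_{\theta,\delta'}) \ge \Theta(\mu) - \delta' M > \Theta(\mu)/2 \ge \delta$. This does not directly control $\Omega_{\theta,\delta}$ itself, however.

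Instead, to get strictness for $\Omega_{\theta,\delta}$ itself, I would observe that if $\mu(\Omega_{\theta,\delta}) = \delta = \Theta(\mu)/2$, then all the inequalities above are equalities. In particular $\delta M = \Theta(\mu)/2$, which forces $M = 1$ in the relevant case. It also forces $|\theta\cdot v| = \delta$ $\mu$-a.e.\ off $\Omega_{\theta,\delta}$. I would then rule this configuration out, or else state the lemma with a non-strict inequality. I expect this equality-case bookkeeping to be the only obstacle; the main estimate is the two-line splitting bound above.
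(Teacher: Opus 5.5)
Your splitting argument is exactly the paper's proof. Like the paper's proof, which also stops at $\mu(\Omega_{\theta,\delta})\ge\Theta(\mu)/2\ge\delta$, it only gives the non-strict bound, so as written neither argument establishes the strict inequality in the statement.

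The gap is easy to close, and your suspicion that genuine equality cases might exist is wrong. In the last step you bound $\delta\,\mu(\mathbb{S}^{n-1}\setminus\Omega_{\theta,\delta})$ by $\delta M$. Equality there forces $\mu(\Omega_{\theta,\delta})=0$, which contradicts $\mu(\Omega_{\theta,\delta})=\delta>0$. In fact $\mu(\Omega_{\theta,\delta})=0$ is impossible anyway, since it would give $\Theta(\mu)\le\delta M\le\Theta(\mu)/2$. Your equality bookkeeping simply did not check this step.

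To get strictness directly, do not discard that term. With $M=\mu(\mathbb{S}^{n-1})$,
\[
\Theta(\mu)\le \mu(\Omega_{\theta,\delta})+\delta\bigl(M-\mu(\Omega_{\theta,\delta})\bigr),
\qquad\text{so}\qquad
(1-\delta)\,\mu(\Omega_{\theta,\delta})\ge \Theta(\mu)-\delta M\ge \tfrac{\Theta(\mu)}{2}\ge\delta .
\]
Since $\Theta(\mu)\le\int 1\,\dd\mu=M$, we have $0<\delta\le\frac12$. Hence $\mu(\Omega_{\theta,\delta})\ge\frac{\delta}{1-\delta}>\delta$ for every $\theta$. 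Neither the $\delta'$ detour nor weakening the statement is needed.

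Minor point: the ``first estimate'' you wrote as $\mu(\Omega_{\theta,\delta})\le\mu(\Omega_{\theta,\delta})$ should read $\int_{\Omega_{\theta,\delta}}|\theta\cdot v|\,\dd\mu(v)\le\mu(\Omega_{\theta,\delta})$.
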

\begin{proof}
    Fix $\theta \in \mathbb{S}^{n-1}$ and consider $\delta = \min\{\frac{\Theta(\mu)}{2\mu(\mathbb{S}^{n-1})},\frac{\Theta(\mu)}{2}\}$. We can write
    \[
        0 <\Theta(\mu) 
        \le 
        \int_{\mathbb{S}^{n-1}} |\theta\cdot v| \dd \mu(v)
        \le 
        \delta \mu(\mathbb{S}^{n-1}) + \mu(\Omega_{\theta,\delta}).
    \]
    As a result, we have 
    $
        \mu(\Omega_{\theta,\delta}) 
        \ge 
        \Theta(\mu) - \delta \mu(\mathbb{S}^{n-1}) 
        \ge 
        \frac{\Theta(\mu)}{2} \ge \delta,
    $ and the results follows. Naturally, the same conclusion holds if we replace $\Theta(\mu)$ by $\Theta_+(\mu)$, and $\Omega_{\theta,\delta}$ by $\Omega^+_{\theta,\delta} = \{v\in \mathbb{S}^{n-1}: (\theta\cdot v)_+ > \delta\}$.
\end{proof}

Next, we will show that if $\Theta(S_{K,p})$ is bounded from below, then the inner and circumradii of $K$ are bounded from below and above, respectively. We start with the case $1 < p < n$. 

\begin{lemma}\label{lemma.rR_Lp<n}
    Given $1<p<n$, consider a convex body $K$ such that $o \in K$, $S_K(\{h_K=0\})=0$, $\Theta(S_{K,p}) > 0$, and normalized so that 
    \[
        S_{K,p}(\mathbb{S}^{n-1}) = 1.
    \]
    Then it holds that
    \[
        c_{n,p} \Theta(S_{K,p})^{\underline{\beta}} 
        \le r_K \le R_K 
        \le C_{n,p} \Theta(S_{K,p})^{-\overline{\beta}},
    \] 
    for some constants $c_{n,p},C_{n,p} > 0$ and exponents $\underline{\beta},\overline{\beta} > 0$ depending only on $n,p$. In particular, we have $\overline{\beta} = \frac{p}{p-1}$ and $\underline{\beta} = \frac{(n-1)np}{(p-1)(n-p)}$.
\end{lemma}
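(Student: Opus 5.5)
The plan is to work with three scale–invariant quantities: the normalization $S_{K,p}(\mathbb{S}^{n-1})=1$, the volume identity $\int h_K^p\,\dd S_{K,p}=\frac{n}{p}|K|$, and the $L_p$-Wulff inequality \eqref{eq.LpWulff}. From these I would first get an upper bound on $|K|$. Testing \eqref{eq.LpWulff} with $L=B_1(0)$ gives
\[
\frac{p}{n}=\frac{p}{n}S_{K,p}(\mathbb{S}^{n-1})=V_p(K,B_1(0))\ \ge\ |K|^{\frac{n-p}{n}}|B_1|^{\frac pn}.
\]
Since $p<n$, this yields $|K|\le C_{n,p}$. This is the only place where $p<n$ enters, and it replaces the role of $S(K)$ in Lemma~\ref{lemma.rR}.

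\textbf{Upper bound on $R_K$.} Pick $x_0\in K$ with $|x_0|=R_K$ and set $\theta=x_0/R_K$. Because $o\in K$, the segment $[o,x_0]$ lies in $K$, so $h_K(v)\ge R_K(\theta\cdot v)_+$. Plugging this into the volume identity gives
\[
\frac np|K|\ \ge\ R_K^p\int(\theta\cdot v)_+^p\,\dd S_{K,p}.
\]
So it suffices to bound $\int(\theta\cdot v)_+^p\,\dd S_{K,p}$ from below in terms of $\Theta(S_{K,p})$, for example through Lemma~\ref{lemma.Theta_plus_Theta}, which would give a bound of order $\delta^{p}\cdot\delta$.

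The subtle point is that $\Theta$ only controls $|\theta\cdot v|$, not its positive part. To handle the half $\{\theta\cdot v<-\delta\}$, I would use the points of $\partial K$ with normals $v$ there. Such a point $x$ has $x\cdot v=h_K(v)\ge0$. Its chord in direction $\theta$ stays inside $K$, whose volume is bounded, and $K$ contains the segment $[o,x_0]$. Combining these, I would try to show that on this half $h_K$ is at least a fixed fraction of $R_K|\theta\cdot v|$, or else that $|K|$ is large, which is excluded. I expect this step, which replaces $\Theta_+$ by $\Theta$, to be the main obstacle. It also matters for the constants, since the target exponent $\overline\beta=\frac p{p-1}$ suggests that the final argument goes through Hölder's inequality with exponents $p,\frac p{p-1}$ rather than through the crude bound above.

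\textbf{Lower bound on $r_K$.} As in Lemma~\ref{lemma.rR}, I would take the John ellipsoid with half-axes $a_1\le\dots\le a_n$ and the direction $\theta_n$ of the longest axis. This gives
\[
\mathcal H^{n-1}(\proj_{\theta_n^\perp}K)\le (2n)^{n-1}r_KR_K^{n-2}.
\]
Cauchy's formula expresses this projection as $\int|\theta_n\cdot v|\,h_K^{p-1}\,\dd S_{K,p}$. To connect it to $\Theta(S_{K,p})\le\int|\theta_n\cdot v|\,\dd S_{K,p}$, I would split the sphere into $\{h_K\ge\eta\}$ and $\{h_K<\eta\}$. On the first set the integrand of $\int|\theta_n\cdot v|\,h_K^{p-1}\,\dd S_{K,p}$ is at least $\eta^{p-1}|\theta_n\cdot v|$, so that set contributes at least $\eta^{p-1}$ times its share of $\int|\theta_n\cdot v|\,\dd S_{K,p}$. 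The $S_{K,p}$-mass of the second set must be shown to be small. For this I would compare with the $L_p$-Wulff inequality for a suitable test body $L$, chosen as a small ball or a thin slab around $o$, and use the volume bound from the first step.

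Optimizing in $\eta$ should produce $\mathcal H^{n-1}(\proj_{\theta_n^\perp}K)\gtrsim\Theta^{\frac p{p-1}}$ up to powers of $|K|$. Then the upper bound on $R_K$ and the upper bound on the volume give $r_K\gtrsim\Theta^{\underline\beta}$. The factors $\frac{np}{n-p}$ and $n-1$ in $\underline\beta$ should come, respectively, from the power of $|K|$ in \eqref{eq.LpWulff} and from the $R_K^{n-2}$ together with the projection dimension.
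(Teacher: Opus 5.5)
\textbf{Gap: the lemma is false with $\Theta$ in place of $\Theta_+$.} The step you single out as the main obstacle is controlling $h_K$ on the half $\{\theta\cdot v<-\delta\}$ when only $\Theta$, not $\Theta_+$, is bounded below. It cannot be carried out, because with $\Theta(S_{K,p})$ in the hypothesis the lemma is false. Take $n\ge 3$, $1<p<n-1$, $0<a<1$, and
\[
K_a=[-a,a]^{n-1}\times[-\epsilon_a,L_a],\qquad L_a=a^{-(n-1-p)},\qquad \epsilon_a=a^{\frac{n-1}{p-1}}.
\]
The measure $S_{K_a,p}$ is carried by $\{\pm e_1,\dots,\pm e_n\}$, with the following masses:
\begin{itemize}
\item each $\pm e_i$ with $i<n$ has mass $a^{1-p}(2a)^{n-2}(L_a+\epsilon_a)=2^{n-2}(1+a^{n-1-p}\epsilon_a)$;
\item the point $-e_n$ has mass $\epsilon_a^{1-p}(2a)^{n-1}=2^{n-1}$;
\item the point $e_n$ has mass $L_a^{1-p}(2a)^{n-1}\to 0$.
\end{itemize}
Write $w_i$ for the mass of $\{\pm e_i\}$. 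Since $\int|\theta\cdot v|\,\dd S_{K_a,p}=\sum_i|\theta_i|w_i\ge\min_i w_i$, we get $\Theta(S_{K_a,p})\ge 2^{n-1}$, while the total mass $M_a$ tends to $n2^{n-1}$.

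Because $S_{\lambda K,p}=\lambda^{n-p}S_{K,p}$, the bodies $\tilde K_a=M_a^{-1/(n-p)}K_a$ are normalized and satisfy $\Theta(S_{\tilde K_a,p})\ge 2^{n-1}/M_a\to 1/n$. Yet $R_{\tilde K_a}\gtrsim L_a\to\infty$ and the inradius is $\lesssim a\to0$. Neither alternative of your dichotomy holds here. Let $\theta\approx e_n$ be the direction of the farthest point. Then $h_{\tilde K_a}(-e_n)\approx\epsilon_a\to 0$ while $R_{\tilde K_a}\to\infty$, and $|\tilde K_a|\approx c\,a^{p}\to 0$ is small, not large. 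Likewise $S_{\tilde K_a,p}(\{h_{\tilde K_a}<\eta\})\ge 1/n-o(1)$ for every fixed $\eta>0$, so the smallness you need in the inradius part is also unavailable. Keeping $L$ fixed and tuning $\epsilon_a$ gives boxes showing that the inradius bound fails for every $1<p<n$.

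\textbf{Comparison with the paper, and the fix.} The paper's proof has the same hole. It asserts $h_K(x)\ge\delta R_K$ for every $x$ with $|\langle x,u_0\rangle|\ge\delta$. But $o\in K$ and $R_Ku_0\in K$ only give $h_K(x)\ge R_K\langle x,u_0\rangle_+$, so this holds only when $\langle x,u_0\rangle\ge\delta$. Both arguments become correct once the hypothesis is $\Theta_+(S_{K,p})>0$, which is what Theorem~\ref{thm.quantitative_stability_p} assumes anyway.

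In that setting your route is the shorter one. By Jensen and the normalization, $\int(\theta\cdot v)_+^p\,\dd S_{K,p}\ge\Theta_+(S_{K,p})^p$. Combined with $\int h_K^p\,\dd S_{K,p}=\int h_K\,\dd S_K=n|K|$ and your volume bound, this gives $R_K\le C_{n,p}\Theta_+(S_{K,p})^{-1}$. That is stronger than the exponent $\frac{p}{p-1}$ the paper gets through the H\"older bound on $S(K)$.

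For the inradius, the level-set splitting is unnecessary. The positive-part version of the paper's estimate reads
\[
\Theta_+/2\le(\delta R_K)^{1-p}S(K)\le(\delta R_K)^{1-p}\sigma_{n-1}R_K^{n-1},
\]
which gives $R_K\gtrsim\Theta_+^{p/(n-p)}$. Then $n|K|\ge R_K^p\Theta_+^p$ gives $|K|\gtrsim\Theta_+^{pn/(n-p)}$, and the John ellipsoid step finishes. Here the inradius has to be understood up to translation, since $o\in\partial K$ is allowed.
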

\begin{proof} We start with the following application of H\"older's inequality: note that 
\[
\qquad
\int_{\mathbb S^{n-1}} |\langle x,\theta\rangle|\, \dd S_K(x)
=
\int_{\mathbb S^{n-1}}
|\langle x,\theta\rangle|\,
h_K(x)^{-\frac{p-1}{p}}\,
h_K(x)^{\frac{p-1}{p}}\,
\dd S_K(x).
\]
Using H\"older's inequality, this yields that the left-hand side above is bounded by
\[
\left(
\int_{\mathbb S^{n-1}}
|\langle x,\theta\rangle|\,
h_K(x)^{1-p}\,\dd S_K(x)
\right)^{\frac1p}
\left(
\int_{\mathbb S^{n-1}}
|\langle x,\theta\rangle|\,
h_K(x)\,
\dd S_K(x)
\right)^{\frac{p-1}{p}} .
\]

\[
=
\left(
\int_{\mathbb S^{n-1}}
|\langle x,\theta\rangle|\, \dd S_{K,p}(x)
\right)^{\frac1p}
\left(
\int_{\mathbb S^{n-1}}
|\langle x,\theta\rangle|\,
h_K(x)\,
\dd S_K(x)
\right)^{\frac{p-1}{p}}
\]

\[
\le
S_{K,p}(\mathbb S^{n-1})^{1/p}
\left(
\int_{\mathbb S^{n-1}}
|\langle x,\theta\rangle|\,
h_K(x)\,
\dd S_K(x)
\right)^{\frac{p-1}{p}} .
\]
Integrating both sides in $\theta \in \mathbb{S}^{n-1}$, and using that $S_{K,p}(\mathbb S^{n-1}) = 1$, we have 
\[
\int_{\mathbb S^{n-1}}
\left(
\int_{\mathbb S^{n-1}}
|\langle x,\theta\rangle|\, \dd S_K(x)
\right)
\dd \mathcal{H}^{n-1}(\theta)\]
\[
\le
S_{K,p}(\mathbb S^{n-1})^{1/p}
\int_{\mathbb S^{n-1}}
\left(
\int_{\mathbb S^{n-1}}
|\langle x,\theta\rangle|\, h_K(x)\, \dd S_K(x)
\right)^{\frac{p-1}{p}}
\dd \mathcal{H}^{n-1}(\theta).
\]

Now, by using H\"older's inequality on the right-hand side and Fubini's theorem on the left-hand side, we get that 
\begin{align}\label{eq.upper-bound-perimeter}
    \sigma_{n-1}\, S(K)
    & \le 
    S_{K,p}(\mathbb S^{n-1})^{1/p}
    \sigma_{n-1}^{1/p}\,
    \left(
    \int_{\mathbb S^{n-1}}
    \left(
        \int_{\mathbb S^{n-1}} 
        |\langle x,\theta\rangle|\, h_K(x)\, \dd S_K(x)
    \right)^{\frac{p}{p-1} \cdot \frac{p-1}{p}}
    \, \dd \mathcal{H}^{n-1}(\theta)
    \right)^{\frac{p-1}{p}}  \cr 
    & = 
    S_{K,p}(\mathbb S^{n-1})^{1/p}
    \sigma_{n-1}
    \left(
        \int_{\mathbb S^{n-1}} h_K\, \dd S_K
    \right)^{\frac{p-1}{p}} 
    = \sigma_{n-1}n^{\frac{p-1}{p}}\, V(K)^{\frac{p-1}{p}},
\end{align} 
where $\sigma_{n-1} = \mathcal{H}^{n-1}(\mathbb{S}^{n-1})$ is the surface area of the unit sphere in $\mathbb{R}^n$. 

Next, we bound the perimeter with a power of the volume. Using the $L_p$-Wulff inequality~\eqref{eq.Lp_Minkowski_inequality} applied to the convex body $K$ and with $F$ being the unit ball, we obtain
\[
    1
    = 
    S_{K,p}(\mathbb{S}^{n-1}) 
    = 
    \int_{\mathbb{S}^{n-1}} h_K^{1-p} \, \dd S_{K} 
    \ge  
    n\,\omega_n^{\frac{p}{n}}\, {V(K)}^{\frac{n-p}{n}}.  
\]
Since we have supposed that $p \in (1,n)$, it follows at once that the volume of $K$ is bounded by an absolute constant times the $L_p$ perimeter of $K$. Plugging this back into~\eqref{eq.upper-bound-perimeter}, it follows that the perimeter of $K$ is bounded by an absolute constant 
\begin{equation}\label{eq.bound_perimeter_Lp_case}
    S(K)
    \le 
    c_{n,p},
    \text{ where $c_{n,p}$ depends only on $n$ and $p$.}
\end{equation}

Now, we note that since $\Theta(S_{K,p}) > 0$, taking  $\delta = \min\left\{\frac{1}{2},\frac{1}{2}\Theta(S_{K,p})\right\}$ it holds that 
\[
    \inf_{u \in \mathbb{S}^{n-1}} \int_{\{ x \in \mathbb{S}^{n-1} \colon |\langle x, u \rangle| \ge \delta\}} |\langle x,u\rangle| \, \dd S_{K,p}(x) > \delta. 
\]
Since $S_{K,p}(\mathbb{S}^{n-1}) = 1$, a simple inspection gives that $\delta = \frac{1}{2}\Theta(S_{K,p})$. 

Letting $R_K$ and $r_K$ denote the circumradius and inradius of $K$, respectively, consider $u_0 \in \mathbb{S}^{n-1}$ to be a direction on the sphere such that $R_K \cdot u_0 \in \overline{K}$. Hence, if $x \in \mathbb{S}^{n-1}$ belongs to the set where $|\langle x, u_0\rangle| \ge \delta$, it follows by the previous discussion that $h_K(x) \ge \delta \cdot R_K$. Hence, 
\begin{align}\label{eq.upper-R_k-per}
    \delta 
    & \le 
    \int_{
        \{x \in \mathbb{S}^{n-1} \colon |\langle x,u_0\rangle| \ge \delta\}
    } 
    |\langle x, u_0\rangle| \, 
    \dd S_{K,p}(x) \cr 
    & = 
    \int_{
        \{x \in \mathbb{S}^{n-1} \colon |\langle x,u_0\rangle| \ge \delta\}
    } |\langle x,u_0\rangle| \, h_{K}^{1-p} \, 
    \dd S_K(x) \cr 
    & \le \delta^{1-p} R_K^{1-p} S_K(\{x \in \mathbb{S}^{n-1} \colon |\langle x,u_0\rangle| \ge \delta\}). 
\end{align}

We now estimate the perimeter above in two different ways, both starting out by estimating 
\[
S_K(\{x \in \mathbb{S}^{n-1} \colon |\langle x,u_0\rangle| \ge \delta\}) \le S(K). 
\]
Indeed, from~\eqref{eq.bound_perimeter_Lp_case} we know that $S(K)$ is bounded by a constant depending only on $n$ and $p$. Secondly, we have that, as $K$ is contained in a ball of radius $R_K$, 
\[
S(K) \le \sigma_{n-1} \cdot R_K^{n-1}. 
\]

Since $1 <p < n$, the first of those inequalities entails in an \emph{upper bound} on $R_K$. More precisely, combing~\eqref{eq.upper-R_k-per} with~\eqref{eq.bound_perimeter_Lp_case}, we get the upper bound on $R_K$ 
\begin{equation}
    R_K 
    \le 
    C_{n,p}
    \Theta(S_{K,p})^{-\overline{\beta}},
\end{equation}
where it can be checked that the exponent is given by $\overline{\beta} = \frac{p}{p-1}$. 

On the other hand, the easy estimate $S(K) \le \sigma_{n-1} R_K^{n-1}$ combined with~\eqref{eq.upper-R_k-per} gives the lower bound on $R_K$
\begin{equation}\label{eq.lower_bound_R_Lp}
    R_K \ge
    \bar c_{n,p}
    \Theta(S_{K,p})^{\frac{p}{n-p}},
\end{equation}
for some constant $\bar c_{n,p}$ depending only on $n$ and $p$. Combining~\eqref{eq.lower_bound_R_Lp},\eqref{eq.upper-R_k-per}, and~\eqref{eq.upper-bound-perimeter} we get a lower bound on the volume of $K$ depending only on $n$, $p$ and $\Theta(S_{K,p})$, namely there is another constant $\tilde{c}_{n,p}$ such that
\begin{equation}
    V(K) \ge 
    \tilde{c}_{n,p}\, \Theta(S_{K,p})^{\frac{n-1}{n-p}\frac{p^2}{p-1}},
\end{equation}

Finally, in order to finish the proof, we note that, by taking the John ellipsoid  $E$ of $K$, with $a_1 \le \cdots \le a_n$ being its half axes, then 
\[
    2^n \cdot r_K \cdot R_K^{n-1} \ge 2^n \cdot a_1 \cdot \dots \cdot a_n \ge |K| \ge \tilde{c}_n. 
\]
This, together with the upper bound we have on $R_K$, implies that $r_K$ admits a lower bound of the form 
\[
    r_K \ge c_{n,p} \Theta(S_{K,p})^{\underline{\beta}},
\]
where $c_{n,p}$ is a constant depending only on the dimension, $p$ and the exponent $\underline{\beta}$ can be checked to be $\underline{\beta} = \frac{(n-1)np}{(p-1)(n-p)}$. This finishes the proof. 
\end{proof}

Some observations concerning the previous Lemma~\ref{lemma.rR_Lp<n} in comparison with Lemma~\ref{lemma.rR} for the case $p=1$ are in order.

\begin{remark}
    Note that all the constants can be made explicit and similar bounds can be deduced for general convex bodies, without the normalization $S_{K,p}(\mathbb{S}^{n-1}) = 1$. This simplification is very natural for our quantitative stability result and was made in order to keep the presentation simpler, while still conveying the dependency on $\Theta(S_{K,p})$, which is the quantity that 
    controls the geometry of $K$. 
\end{remark}

\begin{remark}
    Notice that the exponents of $\Theta(S_{K,p})$ do not converge to the exponent of $\Theta(S_K)$ in Lemma~\ref{lemma.rR} as $p \to 1$. This is not surprising, and is an artifact of the $L_p$ surface measure not being translation invariant. It is not clear however if the exponents are optimal.
\end{remark}

In the case $p > n$, the above arguments are greatly simplified, since the corresponding $L_p$-Wulff inequality gives directly a lower bound on the volume of any convex body such that the $L_p$-perimeter is normalized to $1$. For our purposes in quantitative stability the lower bound on the inner radius is only used to bound the volume from below, so this step can be by-passed completely. But since we expect this result to be useful in broader situations, we include a strategy to obtain quantitative lower bounds on the inner radius as well.  

\begin{lemma}\label{lemma.rR_Lp>n}
    Given $p>n$, consider a convex body $K$ such that $o \in K$, $S_K(\{h_K=0\})=0$, $\Theta(S_{K,p}) > 0$, and normalized so that 
    \[
        S_{K,p}(\mathbb{S}^{n-1}) = 1.
    \]
    Then its volume is bounded from below by a universal constant $|K| \ge V_{n,p} \eqdef n^{\frac{n}{p-n}} \omega_n^{\frac{p}{p-n}}$, and we have the following bounds 
    \[
        c_{n,p} \Theta(S_{K,p})^{p\frac{n-1}{p-n}}
        \le 
        r_K \le R_K 
        \le C_{n,p} \Theta(S_{K,p})^{-\frac{p}{p-n}}.
    \]
\end{lemma}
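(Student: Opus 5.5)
The proof will follow the three-step scheme of Lemma~\ref{lemma.rR_Lp<n}: volume lower bound, then bounds on $R_K$, then a bound on $r_K$ via the John ellipsoid. The main simplification is that for $p>n$ the $L_p$-Wulff inequality bounds the volume from below rather than above.

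First, apply the $L_p$-Wulff inequality~\eqref{eq.LpWulff} with $L=B_1(0)$ (so $h_L\equiv1$ and $|L|=\omega_n$). Using the normalization $S_{K,p}(\mathbb{S}^{n-1})=1$, this gives
\[
\frac{p}{n} = V_p(K,B_1(0)) \ge |K|^{\frac{n-p}{n}}\omega_n^{\frac pn}.
\]
Up to the precise normalizing constant in the definition of $V_p$, this yields $1\ge n\,\omega_n^{p/n}|K|^{(n-p)/n}$. Since $n-p<0$, raising both sides to the power $\frac{n}{n-p}$ reverses the inequality and gives $|K|\ge n^{\frac{n}{p-n}}\omega_n^{\frac{p}{p-n}}=V_{n,p}$. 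This step is routine.

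Second, to bound $R_K$ from above, I take $u_0\in\mathbb{S}^{n-1}$ with $R_Ku_0\in K$ and $\delta=\frac12\Theta(S_{K,p})$, as in Lemma~\ref{lemma.Theta_plus_Theta}. Wherever $|x\cdot u_0|\ge\delta$ one would like $h_K(x)\ge\delta R_K$; strictly this holds on the half $x\cdot u_0\ge\delta$, and I will handle the sign issue the same way Lemma~\ref{lemma.rR_Lp<n} does. This gives the estimate~\eqref{eq.upper-R_k-per}:
\[
\delta\le \delta^{1-p}R_K^{1-p}S(K).
\]
Unlike the case $p<n$, there is no a priori bound on $S(K)$, so I use $S(K)\le \sigma_{n-1}R_K^{n-1}$. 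Then
\[
\delta^p\le \sigma_{n-1}R_K^{n-p}.
\]
Since $n-p<0$, this is an upper bound $R_K\le C_{n,p}\,\delta^{-\frac{p}{p-n}}$, which is the claimed exponent. I expect this to be the main point to get right. The same inequality that gave a lower bound on $R_K$ in Lemma~\ref{lemma.rR_Lp<n} now gives an upper bound, and one must check that the choice of $\delta$ (using total mass $1$) and the hemisphere/sign issue do not spoil the argument.

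Third, for $r_K$, let $E$ be the John ellipsoid of $K$ with half-axes $a_1\le\dots\le a_n$, so that $E\subset K\subset nE$ after translation. Then
\[
V_{n,p}\le|K|\le n^n\omega_n a_1\cdots a_n\le n^n\omega_n\, r_K R_K^{n-1}.
\]
Inserting the upper bound on $R_K$ gives
\[
r_K\ge c_{n,p}\,\Theta(S_{K,p})^{\frac{p(n-1)}{p-n}},
\]
which matches the stated exponent. Here I use $a_1\le r_K$ and $a_i\le R_K$ as recalled before Lemma~\ref{lemma.rR}. Note that $r_K$ is defined for centered balls while the John ellipsoid may be translated; as in Lemma~\ref{lemma.rR}, I will interpret $r_K$ and $R_K$ up to this translation.
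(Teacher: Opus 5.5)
Your three steps are the paper's proof step for step, and Steps 1 and 3 are fine. In Step 1, the standard normalization $V_p(K,L)=\frac1n\int h_L^p\,\dd S_{K,p}$ gives exactly $1\ge n\,\omega_n^{p/n}|K|^{(n-p)/n}$, so no hedging is needed. In Step 3, reading $r_K$ as the translation-invariant inradius is the right choice.

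The genuine gap is the sign issue you flagged in Step 2, and it cannot be deferred to Lemma~\ref{lemma.rR_Lp<n}. That proof, like the paper's proof of the present lemma, simply asserts $h_K(x)\ge\delta R_K$ on all of $\{|x\cdot u_0|\ge\delta\}$. On the cap $\{x\cdot u_0\le-\delta\}$ you only know $h_K\ge 0$. Because the weight $h_K^{1-p}$ is large exactly where $h_K$ is small, $S_{K,p}$ can put almost all of its $u_0$-directional mass on that cap. In fact the upper bound on $R_K$ is false under the hypothesis $\Theta(S_{K,p})>0$. For $n=2$, $p=3$, take $K=[-a,L]\times[-b,b]$ with $b=2\sqrt L$ and $a=\sqrt8\,L^{1/4}$. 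Then $o\in\intt K$, and $S_{K,p}$ has atoms of mass $4L^{-3/2}$ at $e_1$, $\frac12$ at $-e_1$, and $\frac14+O(L^{-3/4})$ at each of $\pm e_2$. For a measure on $\{\pm e_1,\pm e_2\}$, $\Theta$ is the smaller of the two axis masses. After rescaling $K$ by a factor tending to $1$ to make the total mass $1$, you get $\Theta(S_{K,p})\to\frac12$ while $R_K\ge L\to\infty$. Boxes $[-a,L]\times[-b,b]^{n-1}$, with $a,b$ suitable powers of $L$, do the same for every $n$ and $p>n$. Note that $\Theta_+(S_{K,p})\le S_{K,p}(\{e_1\})\to 0$ here.

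The missing ingredient is to work with the one-sided functional $\Theta_+$, which is the hypothesis Theorem~\ref{thm.quantitative_stability_p} actually provides.
\begin{itemize}
\item Set $\delta=\frac12\Theta_+(S_{K,p})$. Split $\int (x\cdot u_0)_+\,\dd S_{K,p}\ge\Theta_+(S_{K,p})$ at level $\delta$ and use total mass $1$; this gives $\int_{\{x\cdot u_0>\delta\}} x\cdot u_0\,\dd S_{K,p}\ge\delta$.
\item On this set, $h_K(x)\ge R_K\,x\cdot u_0>\delta R_K$ holds legitimately.
\item Your chain $\delta\le(\delta R_K)^{1-p}S(K)\le\sigma_{n-1}\delta^{1-p}R_K^{n-p}$ then goes through. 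It yields $R_K\le\sigma_{n-1}^{1/(p-n)}\bigl(\Theta_+(S_{K,p})/2\bigr)^{-p/(p-n)}$.
\item Step 3 then gives the inradius bound with $\Theta_+$ in place of $\Theta$.
\end{itemize}
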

\begin{proof}
    First we notice that an easy application of the $L_p$-Wulff inequality~\eqref{eq.LpWulff} gives that 
    \[ 
    1 = S_{K,p}(\mathbb{S}^{n-1}) \ge n \omega_n^{\frac{p}{n}} 
    |K|^{\frac{n-p}{n}}, 
    \] 
    so as $p > n$, we have that the volume of $K$ is bounded from below by the constant stated above. 

    Now, we note that since $\Theta(S_{K,p}) > 0$, taking  $\delta = \min\left\{\frac{1}{2},\frac{1}{2}\Theta(S_{K,p})\right\}$ it holds that 
\[
    \inf_{u \in \mathbb{S}^{n-1}} \int_{\{ x \in \mathbb{S}^{n-1} \colon |\langle x, u \rangle| \ge \delta\}} |\langle x,u\rangle| \, \dd S_{K,p}(x) > \delta. 
\]
Since $S_{K,p}(\mathbb{S}^{n-1}) = 1$, a simple inspection gives that $\delta = \frac{1}{2}\Theta(S_{K,p})$.

To bound the circumradius from above, we can use a similar argument as in the case $1 < p < n$. Since $\Theta(S_{K,p}) > 0$, defining once again the quantity $\delta = \frac{1}{2}\Theta(S_{K,p})$, and taking a direction $u_0$ such that $R_K u_0 \in \partial K$, we have 
\begin{align*}%\label{eq.upper-R_k-per_p>n}
    \delta 
    & \le 
    \int_{
        \{x \in \mathbb{S}^{n-1} \colon |\langle x,u_0\rangle| \ge \delta\}
    } 
    |\langle x, u_0\rangle| \, 
    \dd S_{K,p}(x) \cr 
    & = 
    \int_{
        \{x \in \mathbb{S}^{n-1} \colon |\langle x,u_0\rangle| \ge \delta\}
    } |\langle x,u_0\rangle| \, h_{K}^{1-p} \, 
    \dd S_K(x) \cr 
    & \le \delta^{1-p} R_K^{1-p} S_K(\{x \in \mathbb{S}^{n-1} \colon |\langle x,u_0\rangle| \ge \delta\}) \cr
    & \le \sigma_{n-1} \delta^{1-p} R_K^{n-p}.
\end{align*}
We hence obtain the desired upper bound on $R_K$
\begin{equation} 
R_K \le C_{n,p} \Theta(S_{K,p})^{-\frac{p}{p-n}},  
\end{equation}
with an explicit constant $C_{n,p} \eqdef \sigma_{n-1}^{\frac{1}{p-n}}$. 
    
    To finish the proof, we use a simplified argument to the one in the case $1 < p < n$. By taking the John ellipsoid  $E$ of $K$, with $a_1 \le \cdots \le a_n$ being its half axes, then 
    \[
        c_n \cdot r_K \cdot R_K^{n-1} 
        \ge 
        c_n \cdot a_1 \cdot \dots \cdot a_n \ge |K|
        \ge 
        \tilde{c}_n. 
    \]
    This, together with the upper bound we have on $R_K$, implies that $r_K$ admits a lower bound of the form 
    \[
        r_K \ge c_{n,p} \Theta(S_{K,p})^{p\frac{n-1}{p-n}},
    \]
    and the result follows.
\end{proof}

\section{Quantitative stability for $p=1$}\label{sec.case_p1}

Now we are ready to prove our main stability result for Minkowski's problem. Our proof relies on leveraging the stability of the Brunn-Minkowski inequality in terms of the Hausdorff distance, and later reinforce the stability obtained in this way with stability associated with the Fraenkel asymmetry. Our major reference for these tools are~\cite{diskant1972bounds,diskant1973stability,figalli2010stability}, whose results we review briefly in the sequel. 

One of the first approaches to reinforce the Brunn-Minkowski inequality quantitatively is due to Diskant~\cite{diskant1973stability}, where a control of the Hausdorff distance between two convex bodies can be obtained assuming bounds on the geometry of the convex bodies involved. Recalling that the Hausdorff distance of convex bodies can be written as the $L_\infty$ norm between the corresponding support functions 
\[
    \dH(E,F) = \norm{h_E - h_F}_{L_\infty(\mathbb{S}^{n-1})}. 
\]
We are then interested in controlling this quantity in terms of the deficit of the Brunn-Minkowski inequality, as we will only need this inequality with the convexity parameter given by $1/2$, we define the deficit of the Brunn-Minkowski inequality for two convex sets $E,F$ as 
\begin{equation}\label{eq.deficit_BM}
    \delta_{\BM}(E,F) 
    \eqdef 
    \frac{
        \left|\frac{1}{2}E + \frac{1}{2}F\right|^{\frac{1}{n}} 
    }{
        \frac{1}{2}|E|^{\frac{1}{n}} + 
        \frac{1}{2}|F|^{\frac{1}{n}}}
    - 1.
\end{equation}

It is shown in~\cite{diskant1973stability} that, given two convex bodies $E,F$ such that $r \le r_E, r_F$ and $R_E, R_F \le R$, there is a constant $C$ depending on $r,R,n$ such that 
\begin{equation}\label{eq.diskant_isoperimetric}
    \inf_{x_0 \in \mathbb{R}^n} {\dH(E, x_0 + \lambda F)}^n \le C\delta_{\BM}(E,F), 
\end{equation}
where $\lambda = \left(\frac{|E|}{|F|}\right)^{\frac{1}{n}}$. 

The optimal quantitative version of this inequality was then proposed in~\cite{figalli2010stability}, w.r.t.~the Fraenkel asymmetry. This result can then be viewed as a strong concavity of the volume functional with respect to Minkowski addition. However, it can be expected that this strong concavity is not uniform among all convex bodies and should be weaker for convex bodies with very different volumes. In other words, the stability constant depends on the volume ratio between $E$ and $F$ as
\[
    \sigma_{E,F} 
    \eqdef 
    \max 
    \left\{
      \frac{|E|}{|F|},\frac{|F|}{|E|}  
    \right\}. 
\]
It is shown in~\cite{figalli2010stability}, that there exists a constant $C_n$ depending only on the dimension for which we have that
\begin{equation}\label{eq.quantitative_BM}
    {\alpha(E,F)}^2 \le C_n\,\sigma^{\frac{1}{n}}_{E,F}\,\delta_{\BM}(E,F). 
\end{equation}
which controls then the stronger Fraenkel asymmetry between $E$ and $F$
\begin{equation}\label{eq.quantitative_isoperimetric}
    \alpha(E,F) 
    \eqdef 
    \inf\left\{
        \frac{|E \Delta (x_0 + rF)|}{|E|} : \
        x_0 \in \mathbb{R}^n, \ r^n|F| = |E|    
    \right\}. 
\end{equation}

\subsection{Proof of Theorem~\ref{thm.quantitative_stability_p=1} %via variational stability
}\label{sec.p=1_variational}

As mentioned above, our second approach is to exploit variational formulation that describes Minkowski bodies to derive stability estimates. A classical variational problem describing them is the following 
\begin{equation}\label{eq.classical_var_Minko}
    \lambda_\mu \eqdef 
    \inf_{K \text{ centered }} 
    F_\mu(K) 
    \eqdef 
    \frac{
        \displaystyle
        \int_{\mathbb{S}^{n-1}} h_K \dd \mu
    }{|K|^{\frac{1}{n}}}.
\end{equation}
Notice that the energy $F_\mu$ is invariant with respect to dilations. Therefore, we can show with Wulff's inequality that the only minimizers of $F_\mu$ are dilations of Minkowski's bodies associated with $\mu$. 

\begin{lemma}\label{lemma.variational_problem}
    The following assertions hold: 
    \begin{enumerate}
        \item Let $\mu \in \mathscr{P}(\mathbb{S}^{n-1})$ such that $\Theta(\mu) > 0$, then the quantity $K \mapsto F_\mu(K)$ is minimized by any dilation of the Minkowski body $E_\mu$ associated with $\mu$. As a result we have 
        \[
            \lambda_\mu = n {|E_\mu|}^{\frac{n-1}{n}}, 
        \]
        where $E_\mu$ is the unique centered body such that $S_{E_\mu} = \mu$.
        \item For $\mu,\nu$ such that $\Theta(\mu),\Theta(\nu) > 0$ it holds that 
        \[
            |\lambda_\mu - \lambda_\nu| 
            \le 
            C_{\mu,\nu} \dc(\mu,\nu),
        \]
        where $\dc$ corresponds to the convex dual distance, defined in~\eqref{eq.convex_dual_distance}, and $C_{\mu,\nu} =  \max\left\{\frac{R(E_\mu)}{|E_\mu|^{\frac{1}{n}}}, \frac{R(E_\nu)}{|E_\nu|^{\frac{1}{n}}}  \right\}$. 
    \end{enumerate}
\end{lemma}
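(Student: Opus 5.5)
For part (1), the plan is to compare an arbitrary centered convex body $K$ with $E_\mu$ through the Wulff inequality~\eqref{eq.Wulff_inequality}. Since $\Theta(\mu)>0$ and $\mu$ is a centered probability measure (so that $E_\mu$ exists by Alexandrov's theorem), we have $S_{E_\mu}=\mu$. This gives
\[
    \int_{\mathbb{S}^{n-1}} h_K \dd\mu = n V_1(E_\mu,K) \ge n{|E_\mu|}^{\frac{n-1}{n}}{|K|}^{\frac1n}.
\]
Dividing by $|K|^{1/n}$ yields $F_\mu(K)\ge n|E_\mu|^{\frac{n-1}{n}}$. For $K=E_\mu$ we get $\int h_{E_\mu}\dd\mu = n|E_\mu|$, so $F_\mu(E_\mu)=n|E_\mu|^{\frac{n-1}{n}}$ and the bound is attained.

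Dilation invariance of $F_\mu$ shows that every dilation $tE_\mu$ is also a minimizer. Here one should check that $h_{tE_\mu}=t h_{E_\mu}$ and $|tE_\mu|^{1/n}=t|E_\mu|^{1/n}$, so $t$ cancels.

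Two technical points need attention. Translation of $K$ does not change $\int h_K\dd\mu$, because $\mu$ is centered, so the restriction to centered $K$ is harmless. The centering convention for $E_\mu$ (barycenter at the origin) is consistent with this. Conversely, any minimizer must be a dilation of $E_\mu$, by the equality case of~\eqref{eq.Wulff_inequality} (homothety) together with centering. This is not strictly needed for the formula $\lambda_\mu = n|E_\mu|^{\frac{n-1}{n}}$.

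For part (2), I would use the standard ``compare minimum values via competitors'' argument. Testing $F_\nu$ against the competitor $E_\mu$ gives
\[
    \lambda_\nu - \lambda_\mu \le F_\nu(E_\mu)-F_\mu(E_\mu) = \frac{\int h_{E_\mu}\dd(\nu-\mu)}{|E_\mu|^{1/n}}.
\]
To bring in $\dc$, rescale $E_\mu$ into the unit ball. Let $R=R(E_\mu)$ be its circumradius. Since $E_\mu$ is centered, it contains the origin and $E_\mu\subset B_R(0)$, so $K=E_\mu/R\subset B_1(0)$ and $h_{E_\mu}=R\,h_K$. Then
\[
    \left|\int h_{E_\mu}\dd(\nu-\mu)\right|\le R\,\dc(\mu,\nu),
\]
and therefore $\lambda_\nu-\lambda_\mu\le \frac{R(E_\mu)}{|E_\mu|^{1/n}}\dc(\mu,\nu)$. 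Swapping the roles of $\mu$ and $\nu$ gives the symmetric bound, and taking the maximum yields $C_{\mu,\nu}$.

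The main (mild) obstacle is the bookkeeping around centering and the unit-ball normalization in the definition of $\dc$. One must make sure the origin lies in $E_\mu$, so that $E_\mu/R\subset B_1$, with $R$ measured from the origin as in~\eqref{eq.in_circum_radii}. One must also make sure the competitor in the infimum defining $\lambda_\nu$ is admissible, i.e.\ centered. Both follow from taking $E_\mu$ with barycenter at the origin. Everything else is direct from~\eqref{eq.Wulff_inequality} and the definitions.
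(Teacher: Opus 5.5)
Your proposal is correct and follows the paper's proof essentially verbatim. Part (1) uses the Wulff inequality for $V_1(E_\mu,K)$, with equality at $K=E_\mu$ and dilation invariance; part (2) plugs one Minkowski body into the other's variational problem and then swaps the roles of $\mu$ and $\nu$. Your rescaling $E_\mu/R(E_\mu)\subset B_1(0)$ is simply the precise form of the paper's appeal to the circumradius as a Lipschitz bound on the support function when invoking the definition of $\dc$.
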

\begin{proof}
    To prove item $(1)$, first notice that for any centered convex body $L$, by the anisotropic isoperimetric inequality, with anisotropy given by $h_L$, we have 
    \[
        \int_{\mathbb{S}^{n-1}} h_L \dd \mu 
        = 
        \int_{\mathbb{S}^{n-1}} h_L \dd S_{E_\mu} 
        = 
        \int_{\partial E_\mu} h_L(\nu_{E_\mu}) \dd \mathcal{H}^{n-1} 
        \ge 
        n{|L|}^{\frac{1}{n}} {|E_\mu|}^{\frac{n-1}{n}}. 
    \]
    It then follows that $F_\mu(L) \ge n|E_\mu|^{\frac{n-1}{n}}$ for all centered convex bodies $L$. Choosing $L = E_\mu$, from the identity $\int h_E \dd S_E = n|E|$ valid for any centered convex body, we see that $E_\mu$ attains the lower bound and so must be optimal for $\lambda_\mu$. Since this energy is invariant with respect to dilations, this conclusion holds for any $K = \lambda E_\mu$ and $\lambda > 0$. 
    
    To prove item (2), consider two measures $\mu,\nu$ and use $F_\mu(E_\nu)$ to bound $\lambda_\mu$ from above. Since the support function $h_{E_\nu}$ is Lipchitz continuous with constant given by the circumradius of $E_\nu$ we have 
    \[
        \lambda_\mu - \lambda_\nu 
        \le 
        \frac{1}{|E_\nu|^{\frac{1}{n}}} 
        \int_{\mathbb{S}^{n-1}} h_{E_\nu} \dd (\mu - \nu)
        \le 
        \frac{R(E_\nu)}{|E_\nu|^{\frac{1}{n}}} \dc(\mu,\nu). 
    \]
    Changing the roles of $\mu$ and $\nu$, the result follows. 
\end{proof}

Although fundamental, it is less clear how to exploit the stability of Brunn-Minkowski's inequality and the variational problem~\eqref{eq.classical_var_Minko} to obtain the desired stability estimates. A variant of a problem recently proposed by Carlier in~\cite{carlier2004theorem}, consists of exploring the equivalence between a convex body and its support function. Indeed, obtainning this function is clear, but given $\varphi \in \mathscr{C}_b(\mathbb{S}^{d-1})$, we can construct its associated convex body as 
\begin{equation}
    E_\varphi 
    \eqdef \left\{
        x \in \mathbb{R}^n : x\cdot \nu \le \varphi(\nu) 
        \text{ } \nu \in \mathbb{S}^{n-1}
    \right\}, 
    \text{ so that } 
    h_{E_\varphi} = \varphi.
\end{equation}
We could then consider an energy of the form $\displaystyle \varphi \mapsto |E_\varphi|^{\frac{1}{n}} - \int_{\mathbb{S}^{n-1}} \varphi \dd \mu$, which has the advantage of being convave due to the Brunn-Minkowski inequality, and we can hope to extract information from maximizers from~\eqref{eq.quantitative_BM}. 

The only issue is that this energy scales linearly with dilation, so we multiply it by a suitable constant to fix a natural scale of the energy. This is done by considering
\begin{equation}\label{eq.minko_energy_fixed_scale}
    \sup_{\varphi \in \mathscr{C}_b(\mathbb{S}^{n-1})} 
    \mathscr{F}_\mu(\varphi) 
    \eqdef 
    \lambda_\mu
    {|E_\varphi|}^{\frac{1}{n}}
    -
    \int_{\mathbb{S}^{n-1}} \varphi \dd \mu. 
\end{equation}
With this constant, the scale is fixed for the maximization of $\mathscr{F}_\mu$, and it selects precisely the Minkowski bodies, modulo translations, associated with $\mu$. 

\begin{lemma}\label{lemma.maxi_Fmu}
    A function $\varphi$ is maximizer of $\mathscr{F}_\mu$ if and only if $E_{\varphi}$ is homotethic to the Minkowski body $E_\mu$ associated with $\mu$. 
\end{lemma}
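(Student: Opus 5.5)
The plan is to reduce to support functions, then read off the maximal value of $\mathscr{F}_\mu$ and its equality cases from the Wulff inequality~\eqref{eq.Wulff_inequality}, exactly as in item (1) of Lemma~\ref{lemma.variational_problem}. We will show that $\sup \mathscr{F}_\mu = 0$ and identify the $\varphi$ attaining it.

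\emph{Reduction to support functions.} I work with the admissible class where $E_\varphi \neq \emptyset$. This restriction is necessary: for $\varphi \equiv -1$ we get $E_\varphi=\emptyset$ and $\mathscr{F}_\mu(\varphi) = 1 > 0$, so without it the supremum would be $+\infty$. I take this convention to be implicit in~\eqref{eq.minko_energy_fixed_scale}. For admissible $\varphi$, write $K = E_\varphi$. By construction $h_K \le \varphi$ on $\mathbb{S}^{n-1}$, and $E_{h_K} = K$. Since $\mu \ge 0$, this gives
\[
\mathscr{F}_\mu(\varphi) \le \mathscr{F}_\mu(h_K),
\]
with equality if and only if $\varphi = h_K$ $\mu$-a.e.

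\emph{Upper bound $\mathscr{F}_\mu(h_K) \le 0$.} There are two cases.
\begin{itemize}
\item If $|K|>0$, the anisotropic isoperimetric inequality~\eqref{eq.Wulff_inequality}, applied as in the proof of Lemma~\ref{lemma.variational_problem}, gives
\[
\int_{\mathbb{S}^{n-1}} h_K \dd \mu = \int_{\mathbb{S}^{n-1}} h_K \dd S_{E_\mu} \ge n|E_\mu|^{\frac{n-1}{n}}|K|^{\frac1n} = \lambda_\mu |K|^{\frac1n}.
\]
Hence $\mathscr{F}_\mu(h_K)\le 0$. This inequality does not require $K$ to be centered. The reason is that $\mu$ has barycenter $0$, so $\int h_{K+x}\dd\mu = \int h_K\dd\mu + x\cdot\int v\dd\mu(v) = \int h_K \dd\mu$, and $\mathscr{F}_\mu$ is translation invariant.
\item If $|K|=0$, I use the bound $\int h_K\dd\mu \ge 0$. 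It follows by picking any $x\in K$, using $h_K(v)\ge x\cdot v$, and integrating against the centered measure $\mu$. Again $\mathscr{F}_\mu(h_K)\le 0$.
\end{itemize}
Finally, the value $0$ is attained at $\varphi = h_{E_\mu}$, by the identity $\int h_{E_\mu}\dd S_{E_\mu} = n|E_\mu|$. So $\sup\mathscr{F}_\mu = 0$.

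\emph{Equality cases.} Suppose $\mathscr{F}_\mu(\varphi)=0$. Then both inequalities above are equalities: $\varphi = h_K$ $\mu$-a.e., and equality holds in Wulff's inequality. When $|K|>0$, the equality case of Brunn--Minkowski~\eqref{eq.Brunn-Minkowski}, from which~\eqref{eq.Wulff_inequality} is derived, forces $K$ to be homothetic to $E_\mu$. Conversely, every $x+tE_\mu$ with $t>0$ gives the value $0$. This follows from the $1$-homogeneity $\mathscr{F}_\mu(h_{tK}) = t\,\mathscr{F}_\mu(h_K)$ combined with the translation invariance established above.

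\emph{Main obstacle.} The delicate point is the degenerate situation, not the Wulff step. Points $K=\{x\}$, for instance $\varphi\equiv 0$, also give $\mathscr{F}_\mu = 0$. Such a $K$ is homothetic to $E_\mu$ only in the degenerate sense of dilation factor $t=0$. I would handle this by stating the lemma with homotheties of ratio $t\ge 0$. Alternatively, and more in line with the later use of $\mathscr{F}_\mu$ together with~\eqref{eq.quantitative_BM}, I would restrict to $\varphi$ for which $E_\varphi$ is a convex body, where the statement holds as written.

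It remains to check that the equality case in~\eqref{eq.Wulff_inequality} indeed characterizes homothety. The standard route is to write $V_1(E_\mu,K)$ as the derivative of $t\mapsto |E_\mu+tK|$ at $t=0$ via~\eqref{eq.first_variation_volume}. One then uses concavity of $t\mapsto |E_\mu + tK|^{1/n}$, which is linear exactly when Brunn--Minkowski is an equality along the whole segment.
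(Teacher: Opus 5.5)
Your proposal is correct and is essentially the paper's argument: Wulff's inequality~\eqref{eq.Wulff_inequality} with its equality case gives $\mathscr{F}_\mu \le 0$, with equality only at homothets of $E_\mu$, and $\mathscr{F}_\mu(h_{E_\mu}) = 0$ because $\int h_{E_\mu}\,\dd S_{E_\mu} = n|E_\mu|$. Your extra care is warranted, since the paper's proof tacitly works only with support functions of convex bodies, and your examples $\varphi\equiv -1$ and $\varphi\equiv 0$ show the statement needs that restriction (for the converse with a general continuous $\varphi$ one must also check that $E_\varphi = x+tE_\mu$ forces $\varphi = h_{E_\varphi}$ on the support of $\mu$, which is true but verified neither by you nor by the paper).
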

\begin{proof}
    Let $\varphi_\mu$ be the support function associated with $E_\mu$ the centered Minkowski body of $\mu$. By the equality case of the anisotropic isoperimetric inequality from above, for any support function $\varphi$ that is not equivalent to $\varphi_\mu$, for a translation and a dilation, we have 
    \begin{align*}
        \mathscr{F}_\mu(\varphi)
        &= 
        \lambda_\mu {|E_\varphi|}^{\frac{1}{n}}
        - 
        \int_{\mathbb{S}^{n-1}} \varphi \dd \mu 
        <
        \frac{\displaystyle
            \int_{\mathbb{S}^{n-1}} \varphi \dd \mu
        }{{|E_\varphi|}^{\frac{1}{n}}} {|E_\varphi|}^{\frac{1}{n}} 
        - 
        \int_{\mathbb{S}^{n-1}} \varphi \dd \mu = 0. 
    \end{align*}
    In addition, we have $\mathscr{F}_{\mu}(\varphi_\mu) = 0$, from the fact that the centered Minkowski body $E_\mu$ is optimal for $\lambda_\mu$. 
\end{proof}

\begin{remark}\label{remark.lojasiewicz}
    From the previous Lemma we can indeed appreciate the thesis of Theorem~\ref{thm.quantitative_stability_p=1} as a Łojasiewicz inequality in the sense of~\eqref{eq.lojasiewicz_inequalities}. Indeed, the equivalence class of homotheties that is hard encoded into the Fraenkel asymmetry is precisely the set of maximizers of the functionals $\mathscr{F}_\mu$. 
\end{remark}

The energy~\eqref{eq.minko_energy_fixed_scale} is convenient since the first variation of the volume functional can be explicitly computed with the surface measure as discussed above~\eqref{eq.first_variation_volume}. Indeed, for any $\varphi, v \in \mathscr{C}_b(\mathbb{S}^{n-1})$ and defining $\varphi_t \eqdef (1-t)\varphi + t v$ we have
\begin{equation}
    \lim_{t \to 0} \frac{|E_{\varphi_t}| - |E_{\varphi}|}{t} 
    = 
    \int_{\partial E_{\varphi}} v(\nu_{E_{\varphi}}) \dd \mathcal{H}^{n-1}
    =
    \int_{\mathbb{S}^{n-1}} v \dd S_{E_{\varphi}},
\end{equation}

As a result, the functional $\varphi \mapsto \mathscr{F}_\mu(\varphi)$ is concave and its first variation evaluated at a function $\varphi$ is the Radon measure given by 
\begin{equation}\label{eq.1st_order}
    \nabla \mathscr{F}_\mu[\varphi] 
    = 
    \lambda_\mu\,
    \frac{ S_{E_\varphi}}{n{|E_\varphi|}^{\frac{n-1}{n}}}
    -
    \mu. 
\end{equation}
As a result, the first order optimality condition for~\eqref{eq.minko_energy_fixed_scale} become $\nabla \mathscr{F}_\mu[\varphi] \equiv 0$, in the sense of measures, which is equivalent to being a Minkowski body, as we have already proven. 

At this point we have all the elements to obtain the desired stability result. 

\begin{proof}[Proof of Theorem~\ref{thm.quantitative_stability_p=1}]
    Given $\mu,\nu \in \mathscr{P}(\mathbb{S}^{n-1})$ such that $\Theta(\mu),\Theta(\nu) \ge \vartheta > 0$, the strict positivity of the functional $\Theta$ implies the existence of centered Minkowski bodies $E_\mu,E_\nu$. Let $\varphi_\mu, \varphi_\nu$ be their corresponding support functions.  

    Now, recall that from Lemma~\ref{lemma.variational_problem} we know that both support functions $\varphi_\mu, \varphi_\nu$ maximize the functionals $\mathscr{F}_\mu$ and $\mathscr{F}_\nu$, respectively. Hence, we define $\varphi_{1/2} \eqdef \frac{1}{2}(\varphi_\mu + \varphi_\nu)$, and consider the following quantity
    \begin{equation}\label{eq.def_delta}
        \begin{aligned}
            0 
            &\le \delta \eqdef 
            \mathscr{F}_\mu(\varphi_{1/2}) 
            - 
            \frac{1}{2}\left(
                \mathscr{F}_\mu(\varphi_{\mu}) + \mathscr{F}_\mu(\varphi_{\nu})
            \right)\\ 
            &= 
            {\left|\frac{1}{2}E_\mu + \frac{1}{2}E_\nu\right|}^{\frac{1}{n}} 
            - 
            \frac{1}{2}\left(
                |E_\mu|^{\frac{1}{n}} + |E_\nu|^{\frac{1}{n}}
            \right)\\ 
            &= 
            \frac{1}{2}\left(
                |E_\mu|^{\frac{1}{n}} + |E_\nu|^{\frac{1}{n}}
            \right) 
            \delta_{\BM}(E_\mu,E_\nu).
        \end{aligned}
    \end{equation}

    The quantity $\delta$ above is non-negative thanks to the concavity of $\varphi \mapsto \mathscr{F}_\mu(\varphi)$. So, using the fact that $\varphi_{\mu}$ is a maximizer and the aforementioned concavity, it follows that
    \begin{align*}
        \delta &\le \frac{1}{2}
        \left(
            \mathscr{F}_\mu(\varphi_\mu) 
            - 
            \mathscr{F}_\mu(\varphi_\mu) 
        \right)
        \le 
        \frac{1}{2}
        \inner{\nabla \mathscr{F}_\mu(\varphi_\nu), \varphi_\mu - \varphi_\nu}\\
        &= 
        \frac{1}{2}
        \inner{
            \frac{\lambda_\mu}{\lambda_\nu}S_{E_\nu} - \mu, \varphi_\mu - \varphi_\nu}
        = 
        \frac{1}{2\lambda_\nu}
        \inner{\lambda_\mu \nu - \lambda_\nu \mu, \varphi_\mu - \varphi_\nu}\\ 
        &= 
        \frac{\lambda_\mu}{2\lambda_\nu} \inner{\nu - \mu, \varphi_\mu - \varphi_\nu}
        + 
        \frac{\lambda_\mu - \lambda_\nu}{2\lambda_\nu} \inner{\mu, \varphi_\mu - \varphi_\nu}
        \\
        &\le 
        \frac{\lambda_\mu}{2\lambda_\nu} \norm{\varphi_\mu - \varphi_\nu}_\infty 
        \int_{\mathbb{S}^{n-1}} 1 \dd (\nu - \mu) 
        + 
        \frac{\lambda_\nu}{2}\mu(\mathbb{S}^{n-1})\norm{\varphi_\mu - \varphi_\nu}_\infty (\lambda_\nu - \lambda_\mu) \\ 
        &\le  
        C_{n,\vartheta}\, \dc(\mu,\nu)
    \end{align*}
    where we have used the quantitative stability of the constant $\mu \mapsto \lambda_\mu$ with respect to $\dc(\mu,\nu)$, proved in Lemma~\ref{lemma.variational_problem}, combined with the uniform estimates on the inradius and circumradius from~\ref{lemma.rR}, so that $\varphi_\mu$ and $\varphi_\nu$ are uniformly bounded by a constant depending only on $n$ and $\vartheta$. Combining the above estimate with~\eqref{eq.def_delta} and the quantitative stability of the Brunn-Minkowski inequality~\eqref{eq.quantitative_BM}, we conclude that there exists a constant $C_{\vartheta,n}$ such that 
    \[
        \alpha(E_\mu, E_\nu)^2 \le C_{n,\vartheta}\, {\dc(\mu,\nu)},
    \]
    for any pair of measures $\mu,\nu$. 

    As a result, we can obtain a strengthened version, since for $\mu,\nu$ sufficiently close, for instance in the $\dc$ distance or even in a stronger one such as the Wasserstein distances in optimal transport, we obtain that $E_\mu$ and $E_\nu$ are close enough to apply Diskant's Hausdorff based quantitative stability. However, since the convex bodies $E_\mu,E_\nu$ have uniformly bounded diameter and inradius, up to using a bigger stability constant this proximity condition on $\mu$ and $\nu$ can be ignored.
    
    Recalling that for any convex body $E$ and any displacement $x_0 \in \mathbb{R}^n$ it holds that 
    \[
        h_{x_0+E}(\theta) 
        = 
        \sup_{z \in x_0 + E} \inner{\theta,z} 
        = 
        \inner{x_0,\theta} + h_E(\theta), 
    \]
    and since $\varphi_{\mu} = h_{E_\mu}$, the estimate above and the fact that both measures $\mu$ and $\nu$ are centered at the origin gives that 
   \begin{align*}
        \delta \le&
        \frac{\lambda_\mu}{2\lambda_\nu} 
        \inner{\nu - \mu, h_{E_\mu}- h_{E_\nu + x_0}}
        + 
        \frac{\lambda_\mu - \lambda_\nu}{2\lambda_\nu} 
        \inner{\mu,  h_{E_\mu}- h_{E_\nu + x_0}} \\
        &+ 
        \frac{\lambda_\mu}{2\lambda_\nu} 
        \int_{\mathbb{S}^{n-1}}
        \inner{\theta, x_0}\dd(\nu - \mu)(\theta)
        +
        \frac{\lambda_\mu - \lambda_\nu}{2\lambda_\nu} 
        \int_{\mathbb{S}^{n-1}}
        \inner{\theta,x_0} \dd \mu (\theta)\\ 
        &\le 
        C_{n,\vartheta} \dc(\mu,\nu) \dH(E_\mu, x_0 + E_\nu). 
   \end{align*}

   We can then summarize the information that we have so far in the following inequalities
   \begin{align}
        \label{eq.estimate_deltaBM}
        \delta_{\BM}(E_\mu,E_\nu) 
        &\le 
        C_1 \dc(\mu,\nu) \inf_{x_0 \in \mathbb{R}^n} \dH(E_\mu, x_0 + E_\nu)\\
        \label{eq.BM_AMP}
        \alpha(E_\mu,E_\nu)^2
        &\le 
        C_2 \delta_{\BM}(E_\mu,E_\nu) \\
        \label{eq.BM_Diskant}
        \inf_{x_0 \in \mathbb{R}^n} 
        \dH(E_\mu, x_0 + \lambda E_\nu)^n
        &\le 
        C_3 \delta_{\BM}(E_\mu,E_\nu),
   \end{align}  
   where $\lambda = \left(\frac{|E_\mu|}{|E_\nu|}\right)^{\frac{1}{n}}$.
   
   Estimate~\eqref{eq.estimate_deltaBM} is obtained by taking the infimum among all $x_0$ in the previous argument,~\eqref{eq.BM_AMP} follows from the Fraenkel asymmetry based quantitative Brunn-Minkowski inequality, while~\eqref{eq.BM_Diskant} corresponds to Diskant's version based on the Hausdorff distance.

   Using the volume stability bounds from Lemma~\ref{lemma.variational_problem}, we can remove the dilation $\lambda$ in~\eqref{eq.BM_Diskant} at the cost of a bigger stability constant. Indeed, as 
   \[
        \frac{\lambda_\mu}{\lambda_\nu} = \lambda^{n-1},
   \]
   we have that $\lambda$ is uniformly bounded from above by a constant depending only on $n$ and $\vartheta$, and we can assume $\lambda \ge 1$ without loss of generality by taking $\mu,\nu$ such that $|E_\mu| \ge |E_\nu|$. In addition, from Lemma~\ref{lemma.variational_problem}, there is a constant depending on $n$ and $\vartheta$ such that 
   \[
      \lambda \le 
      {\left(1 + C\dc(\mu,\nu)\right)}^{\frac{1}{n-1}} 
      \le
      1 + \frac{2C}{n-1}\dc(\mu,\nu). 
   \]
   
   Combining~\eqref{eq.estimate_deltaBM} and~\eqref{eq.BM_Diskant} we obtain the following Hausdorff stability for Minkowski bodies 
   \begin{align*}
        \inf_{x_0 \in \mathbb{R}^n} \dH(E_\mu, x_0 + \lambda E_\nu)^n
        &\le 
        C_1 \dc(\mu,\nu)
        \left( 
            \dH(E_\nu, \lambda E_\nu)    
            + 
            \inf_{x_0 \in \mathbb{R}^n} 
            \dH(E_\mu, x_0 + \lambda E_\nu)
        \right)\\ 
        &\le 
        C \dc(\mu,\nu)^2 + 
        C' \dc(\mu,\nu) 
        \inf_{x_0 \in \mathbb{R}^n} \dH(E_\mu, x_0 + \lambda E_\nu).
   \end{align*}
   Noticing that $\dc(\mu,\nu)$ is bounded by a constant depending only on $n$, we can absorb the first term in the right-hand side into the second one, up to increasing the constant. Therefore, we can simplify the exponent on the left-hand side, and using a similar argument to remove the dilation $\lambda$ we obtain the following Hausdorff stability for Minkowski bodies
   \begin{equation}\label{eq.hausdorff_estimate}
       \inf_{x_0 \in \mathbb{R}^n} \dH(E_\mu, x_0 + E_\nu)
        \le 
        C_1 \dc(\mu,\nu)^{\frac{1}{n-1}} 
   \end{equation}
   We can then combine this with~\eqref{eq.estimate_deltaBM} and~\eqref{eq.BM_AMP} to obtain 
   \begin{equation}\label{eq.fraenkel_estimate}
        \alpha(E_\mu,E_\nu)^2
        \le 
        C {\dc(\mu,\nu)}^{1 + \frac{1}{n-1}}.
   \end{equation}
\end{proof}

We now discuss the sharpness of the previous estimates. This will be done with the following two examples.

\begin{example}\label{exemple.sharp_hausdorff}
    Let $Q_0 = {[-1,1]}^n$ be the centered cube of side $2$ in $\mathbb{R}^n$. Our sequence of convex bodies will be obtained by slicing this cube by the hyperplane $\{x \in \mathbb{R}^n : x_1 + \cdots + x_n = n - t\}$: 
    \[
        Q_t \eqdef 
        Q_0 \cap \left\{
            x \in \mathbb{R}^n : x_1 + \cdots + x_n \le n - t
        \right\}
    \]
    As a result, we get that $\dH(Q_0, Q_t) = O(t)$. On the other hand, the new facet introduced by this slicing is given by a simplex $\Delta_t$ of area proportional to $O(t^{n-1})$. As a result, we get for all $1$-Lipchitz functions $f$ that 
    \[
        \left|
            \int_{\mathbb{S}^{n-1}}
            f \dd(S_{Q_t} - S_{Q_0})
        \right|
        \le C \mathcal{H}^{n-1}(\Delta_t) \le C t^{n-1}. 
    \]
    Since the quantities $\Theta(Q_t)$ remain uniformly bounded from below as $t \to 0$, this example shows that any estimate of the form 
    \[
        \inf_{x_0 \in \mathbb{R}^n} 
        \dH(E, F + x_0) \le C 
        \dc(S_E, S_F)^\gamma
    \]
    cannot hold with an exponent $\gamma$ bigger than $1/(n-1)$. Since an estimate was obtained with this upper bound in~\eqref{eq.hausdorff_estimate}, this is the sharp exponent. 
\end{example}

\begin{example}\label{exemple.sharp_fraenkel}
    Consider once again the cube $Q_0 = {[-1,1]}^n$. And now, instead of slicing the cube, we enlarge one of its sides defining 
    \[
        \bar Q_t \eqdef 
       {[-1,1]}^{n-1}\times [-1-t,1 + t]. 
    \]
    In this case, the volume difference between $Q_t$ and $Q_0$ behaves linearly in $t$ and we get $\alpha(Q_0, \bar Q_t) = O(t)$. On the other hand, the surface area also only grows linearly in $t$ and we get that $\dc(S_{\bar Q_t}, S_{Q_0}) = O(t)$ as well.

    Once again the quantities $\Theta(\bar Q_t)$ remain uniformly bounded from below as $t \to 0$, so that this example indicates a Lipschitz behaviour
    \[
        \alpha(E, F) \le C 
        \dc(S_E, S_F).
    \]
    In dimension $n = 2$ this behavior is obtained in~\eqref{eq.fraenkel_estimate}, showing that this is the sharp exponent on the plane. We conjecture that the Lipschitz behavior should also hold in $\mathbb{R}^n$.  
\end{example}

\section{Quantitative stability for $1 < p \neq n$}
\label{secp>1}

In order to adapt the previous arguments for the $p > 1$ case, the central object becomes Firey's $L_p$ Brunn-Minkowski theory. As already discussed in Section~\ref{sec.technical_estimates_p>1}, the $L_p$ theory is strongly based on the equivalence between a convex body $E$ and its support function $h_E$, we recall that the $L_p$-sum of convex bodies is defined via a non-linear combination of their respective support functions as
\begin{equation}
    h_{t\cdot E +_p s\cdot F} \eqdef 
    {\left(
        t h_E^p + s h_F^p
    \right)}^{1/p}.
\end{equation}
The convex body that defines the corresponding $L_p$ Minkowski sum is then given by 
\begin{equation}
    t\cdot E +_p s\cdot F
    \eqdef 
    \left\{
        x \in \mathbb{R}^n: 
        \inner{x,u}
        \le 
        {\left( t h_E^p(u) + s h_F^p(u) \right)}^{\frac{1}{p}}, \text{ for all }
        u \in \mathbb{S}^{n-1}
    \right\}
\end{equation}

As in the classical $p=1$ case, a suitable power of the volume functional is concave with respect to the $L_p$ Minkowski sum. Namely, for any pair of convex bodies $E,F$ which contain the origin it holds that 
\begin{equation}\label{eq.LpBM_}
    {|t\cdot E +_p s \cdot F|}^{p/n} \ge 
    t{|E|}^{p/n} + (1-t){|F|}^{p/n}. 
\end{equation}

This theory has already given us the analogous control to the case $p = 1$ of the geometry of a convex body $E$, whenever the quantity $\Theta(S_{E,p})>0$, is strictly positive. Since this was the major technical result that allowed our uniform control in the proof of Theorem~\ref{thm.quantitative_stability_p=1}, this encourages the extension of both arguments -- the variational one, and via isoperimetric rigidity -- to the $L_p$ case.

To the best of the authors' knowledge, there is no ``off-the-shelf'' quantitative stability results for the $L_p$ isoperimetric and Brunn-Minkowski inequalities, in analogies to the classical couterparts~\eqref{eq.quantitative_isoperimetric} and~\eqref{eq.quantitative_BM}. For this reason we dedicate the next paragraph to the derivation of these stronger versions in the $L_p$ case.

\subsection{Quantitative versions of $L_p$ Brunn-Minkowski and isoperimetric inequalities}\label{sec.quantitativeLpBM_iso}

Firey's $L_p$ Brunn-Minkowski inequalities can be derived from the classical $p=1$ case, see for instance~\cite[Thm.~7.6.3]{boroczky2025isoperimetric} or~\cite{firey1962p}. On the other hand, as was already remarked in Section~\ref{sec.technical_estimates_p>1}, the $L_p$ isoperimetric inequalities can be derived from the $L_p$-BM theory. Therefore, in this section we simply use the stronger quantitative versions from the $p=1$ case in the proof of these results. 

\begin{lemma}\label{lemma.quantitative_LpBM}
    Given $p > 1$, let $E,F \subset \mathbb{R}^n$ be convex bodies containing the origin. Then there is some $\tau \in (0,1/2]$ such that $\tau \le t \le 1 - \tau$, and a dimensional constant $\theta_n \eqdef cn^{-4}{(\log n)}^{-1}$, with $c \in (0,1)$ such that
    \begin{equation}
        |t\cdot E +_p (1-t) \cdot F| \ge 
        |E|^t|F|^{1-t} 
        \left(
            1 + \theta_n \tau {\alpha(E,F)}^2
        \right).
    \end{equation}
    In addition, for all $t,s > 0$ we have 
    \begin{equation}
        {|t\cdot E +_p s\cdot F|}^{p/n} 
        \ge 
        \left(
            t {|E|}^{p/n} + s {|F|}^{p/n}
        \right)
        {\left(
            1 + \frac{\theta_n}{
                \sigma^{p/n}_{E,F}
            } {\alpha(E,F)}^2
        \right)}^{p/n},
    \end{equation}
    where $\sigma_{E,F} \eqdef \max \left\{
        \frac{|E|}{|F|},\frac{|F|}{|E|}
    \right\}$. 

    Using Diskant's version of the Brunn-Minkowski inequality, given $0 <r <R$, there is a constant $C$ depending on $r,R,p,n$, and the convexity parameter $t$ such that for any pair of convex bodies $E,F$ such that $r\le r_E, r_F$ and $R_E,R_F \le R$, it holds that 
    \begin{equation}\label{eq.hausdorff_Lp_BM}
        {|t\cdot E +_p s\cdot F|}^{p/n} 
        \ge 
        \left(
            t {|E|}^{p/n} + s {|F|}^{p/n}
        \right)
        {\left(
            1 + C \inf_{x \in \mathbb{R}^n} 
        \dH(E, x + \lambda F)^n
        \right)}^{p/n},
    \end{equation}
    where $\lambda = \left(\frac{|E|}{|F|}\right)^{\frac{1}{n}}$.
\end{lemma}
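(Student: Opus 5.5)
The plan is to reduce every statement to the classical case $p=1$ through the pointwise comparison between $L_p$ and Minkowski combinations of support functions. For $t\in(0,1)$ and convex bodies $E,F$ containing the origin, the power mean inequality gives
\[
{\left(t h_E^p+(1-t)h_F^p\right)}^{1/p}\ \ge\ t h_E+(1-t)h_F \quad\text{on } \mathbb{S}^{n-1}.
\]
Hence $t\cdot E+_p(1-t)\cdot F\supseteq tE+(1-t)F$, where the right-hand side is the usual Minkowski combination. By monotonicity of volume,
\[
|t\cdot E+_p(1-t)\cdot F|\ \ge\ |tE+(1-t)F|.
\]
So it suffices to have quantitative lower bounds for the ordinary Minkowski combination.

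For the first (multiplicative) inequality, I would invoke the quantitative Brunn--Minkowski inequality of Figalli--Maggi--Pratelli in its log-concave form. For $\tau\le t\le 1-\tau$ it reads
\[
|tE+(1-t)F|\ \ge\ |E|^t|F|^{1-t}\left(1+\theta_n\tau\,\alpha(E,F)^2\right),
\]
with $\theta_n$ of the stated order $n^{-4}(\log n)^{-1}$. This is derived from \eqref{eq.quantitative_BM} after normalizing volumes; the loss in $\sigma_{E,F}$ is absorbed because the geometric mean is smaller than the arithmetic one. Combined with the inclusion above, this gives the first claim directly.

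For the second inequality, with general $t,s>0$, I would use homogeneity to reduce to a convex combination. Write $t\cdot E=\tilde E$ with $h_{\tilde E}=t^{1/p}h_E$, so $\tilde E=t^{1/p}E$, and similarly $\tilde F=s^{1/p}F$. Then $t\cdot E+_p s\cdot F=\tilde E+_p\tilde F$. Next set $\lambda=|\tilde E|^{p/n}/(|\tilde E|^{p/n}+|\tilde F|^{p/n})$, $E'=\lambda^{-1/p}\tilde E$ and $F'=(1-\lambda)^{-1/p}\tilde F$. This gives $\tilde E+_p\tilde F=\lambda\cdot E'+_p(1-\lambda)\cdot F'$ with $|E'|=|F'|$. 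Since $E',F'$ are dilates of $E,F$ and $\alpha$ is invariant under homotheties, $\alpha(E',F')=\alpha(E,F)$. Applying the power-mean inclusion, then \eqref{eq.quantitative_BM} to the equal-volume pair $E',F'$ (so $\sigma=1$), yields
\[
|\lambda E'+(1-\lambda)F'|^{1/n}\ \ge\ |E'|^{1/n}\left(1+c\,\alpha(E,F)^2\right).
\]
Raising to the power $p$ and undoing the normalization, $|E'|^{p/n}=|\tilde E|^{p/n}+|\tilde F|^{p/n}=t|E|^{p/n}+s|F|^{p/n}$, gives the claim. The factor $\sigma_{E,F}^{-p/n}$ only weakens it, so the stated version follows; it also covers bookkeeping if one prefers not to normalize volumes. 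Some care is needed to make the exponent of $(1+\cdot)$ come out as stated, using $(1+x)^p\ge(1+x)^{p/n}$ when $p\ge 1\ge p/n$; if $p>n$, one instead uses Bernoulli's inequality with a smaller constant.

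The Hausdorff version \eqref{eq.hausdorff_Lp_BM} follows the same route with Diskant's inequality \eqref{eq.diskant_isoperimetric} in place of \eqref{eq.quantitative_BM}. The main obstacle is here. The renormalized bodies $E',F'$ have inradii and circumradii controlled by $r,R$ only up to factors depending on $t,s,p$ and the volume ratio, which the volume ratio bounds via $r,R$. So the constant must be allowed to depend on $t$ (as the statement permits). The Hausdorff distance also transforms under the dilations: $\dH(E',x+\mu F')$ equals $\lambda^{-1/p}\dH(\tilde E,\cdot)$. I would therefore track these scalings explicitly, and check that $\inf_x\dH(E',x+F')$ is comparable to $\inf_x\dH(E,x+(|E|/|F|)^{1/n}F)$ with constants depending on $r,R,t,s,p$.
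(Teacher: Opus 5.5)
Your reduction for the first inequality (the power-mean inclusion $tE+(1-t)F\subseteq t\cdot E+_p(1-t)\cdot F$, then the cited log-concave quantitative Brunn--Minkowski inequality) is the paper's argument. Your renormalization of the second inequality to an equal-volume pair $E',F'$ with parameter $\lambda=\frac{t|E|^{p/n}}{t|E|^{p/n}+s|F|^{p/n}}$ is also the paper's route. The gap is where you apply \eqref{eq.quantitative_BM} to $E',F'$ ``with $\sigma=1$'' and get $|\lambda E'+(1-\lambda)F'|^{1/n}\ge|E'|^{1/n}(1+c\,\alpha(E,F)^2)$ with $c$ independent of $\lambda$. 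Inequality \eqref{eq.quantitative_BM} only concerns the midpoint combination. Writing $\lambda E'+(1-\lambda)F'=\frac12(2\lambda E')+\frac12(2(1-\lambda)F')$, the relevant pair has $\sigma^{1/n}=\max\{\frac{\lambda}{1-\lambda},\frac{1-\lambda}{\lambda}\}$. So what you actually obtain is a gain of $C_n^{-1}\min\{\lambda,1-\lambda\}\,\alpha(E,F)^2$. No uniform version can hold. $E',F'$ are a common dilate of the unit-volume normalizations of $E,F$, and as $\lambda\to0$ the left-hand side tends to $|F'|^{1/n}=|E'|^{1/n}$. The paper keeps this dependence by applying its first inequality to the normalized pair, which carries the factor $\tau=\min\{\lambda,1-\lambda\}$.

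In fact the second inequality cannot be proved for all $t,s>0$ as stated. Take non-homothetic $E,F$ with $o$ in their interiors, set $t=1$, and let $s\to0^+$. Then $h_{1\cdot E+_p s\cdot F}\to h_E$ uniformly, so the left-hand side tends to $|E|^{p/n}$. The right-hand side tends to $|E|^{p/n}\bigl(1+\theta_n\sigma_{E,F}^{-p/n}\alpha(E,F)^2\bigr)^{p/n}>|E|^{p/n}$. Thus the factor $\sigma_{E,F}^{-p/n}$ does not ``only weaken'' a uniform bound. What your argument, and the paper's, really proves has $\theta_n\tau$ in place of $\theta_n\sigma_{E,F}^{-p/n}$. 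In the balanced case $t=s$, the only one used later (with $t=s=\frac12$), one has $\tau=\frac{\min\{|E|^{p/n},|F|^{p/n}\}}{|E|^{p/n}+|F|^{p/n}}\ge\frac12\sigma_{E,F}^{-p/n}$. This recovers the stated form with $\theta_n/2$, and it is exactly what is needed to justify the paper's replacement of $\tau$ by $\sigma_{E,F}^{-p/n}$. Your Hausdorff part is unaffected. There the constant may depend on $t,s$, and the volume ratio is controlled by $r,R$, which keeps $\lambda$ away from $0$ and $1$.
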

\begin{proof}
    The proof is obtained by using the quantitative stability version of the Brunn-Minkowski inequality for convex sets into the proof of the $L_p$ Brunn-Minkowski inequality found for instance in~\cite{boroczky2025isoperimetric}. 

    For the first case, take $t \in (0,1)$, so that by convexity of the function $r \mapsto r^p$ for $p > 1$, we have that 
    \[
        h^p_{t\cdot E +_p (1-t)\cdot F} 
        = 
        t h_E^p + (1-t) h_F^p
        \ge 
        (t h_E + (1-t) h_F)^p. 
    \]
    So that 
    \[
        t E + (1-t) F \subseteq t\cdot E +_p (1-t)\cdot F. 
    \]
    Using the quantitative stability version of the classical Brunn-Minkowski inequality~\cite[Corollary~8.6.5]{boroczky2025isoperimetric}, this implies that 
    \[
        |t\cdot E +_p (1-t)\cdot F|
        \ge 
        |t E + (1-t) F|
        \ge 
        |E|^t|F|^{1-t}
        \left(
            1 + \theta_n \tau \alpha(E,F)^2
        \right),
    \]
    where $\tau \in (0,1/2]$ is a such that $\tau \le t \le 1 - \tau$, and $\theta_n \eqdef cn^{-4}{(\log n)}^{-1}$.  

    Now let us derive the second inequality set $t_0 \eqdef |E|^{\frac{1}{n}}$, $s_0 \eqdef |F|^{\frac{1}{n}}$, as well as the dilations 
    \[
        E_0 \eqdef {t_0}^{-1}E, \quad 
        F_0 \eqdef {s_0}^{-1}F,
    \]
    in such a way that $|E_0| = |F_0| = 1$. Therefore, considering 
    \[
        \lambda \eqdef \frac{s\,s_0^p}{t\,t_0^p + s\,s_0^p} \in (0,1),
    \]
    and assuming without loss of generality that $\frac{s\ s_0^p}{t\,t_0^p + s\,s_0^p} < \frac{t\,t_0^p}{t\,t_0^p + s\,s_0^p}$, we have that 
    \begin{align*}
        h_{t\cdot E +_p s\cdot F}
        &= 
        {\left(
            t\,h_{E}^p + s\,h_{F}^p
        \right)}^{1/p}  \\ 
        &= 
        {\left(
          t\,t_0^p + s\,s_0^p
        \right)}^{1/p} 
        {\left(
            (1-\lambda) h_{E_0}^p +
            \lambda h_{F_0}^p
        \right)}^{1/p}
    \end{align*}

    As a result, using the previous quantitative stability version of the Brunn-Minkowski inequality with the renormalized sets $E_0, F_0$, we get that 
    \begin{align*}
        {(t\,t_0^p + s\,s_0^p)}^{\frac{n}{p}}
        |t\cdot E +_p s\cdot F|
        &= 
        |(1-\lambda)\cdot E_0 +_p \lambda\cdot F_0|\\ 
        &\ge \left(
            1 + \theta_n \tau {\alpha(E_0, F_0)}^2
        \right),
    \end{align*}
    for $\tau$ such that 
    \[
        \lambda \le \tau \le 1-\lambda. 
    \]
    Recalling the definition of $t_0$ and $s_0$ above, and dilating $E_0,F_0$ back to $E,F$ it clearly holds that $\alpha(E_0,F_0) = \alpha(E,F)$ and we get that 
    \[
        {|t\cdot E +_p s\cdot F|}^{p/n} 
        \ge 
        \left(
            t {|E|}^{p/n} + s {|F|}^{p/n}
        \right)
        {\left(
            1 + \frac{\theta_n}{
                \sigma_{E,F}^{p/n}
            } {\alpha(E, F)}^2
        \right)}^{p/n}
    \]

    The estimate based on the Hausdorff distance~\eqref{eq.hausdorff_Lp_BM} is obtained by using the same argument, but instead of using the quantitative stability version of the Brunn-Minkowski inequality based on the Fraenkel asymmetry, we use the one due to Diskant~\cite{diskant1973stability} discussed above. 
\end{proof}
This proof is most likely not optimal due to the exponent $p/n$ that seems inevitable with this approach. It is nevertheless sufficient to conclude with the quantitative stability result in the $L_p$ case.

On the other hand, leveraging the quantitative anisotropic isoperimetric inequality, we can sharpen the inequality on the $L_p$ mixed volume
\begin{equation}\label{eq.Lp_Minkowski_inequality}
    V_p(E,F) \ge 
    |F|^{\frac{p}{n}}{|E|}^{\frac{n-p}{n}},
    \text{ equivalent to }
    0\le 
    \delta_{\mathrm{ISO},p}(E,F) 
    \eqdef 
    \frac{V_p(E,F)}{
        |F|^{\frac{p}{n}}{|E|}^{\frac{n-p}{n}}
    } - 1,
\end{equation}
where we have equality if and only if $E$ and $F$ are dilates. Although we shall not require this version, we believe it to be of independent interest, and it is a direct consequence of the quantitative anisotropic isoperimetric inequality using the same ideas from the quantitative $L_p$ Brunn-Minkowski inequality above. 

% We are interested in using the deficits 
% \begin{align*}
%     \delta_{\text{BM},p}(E,F) 
%     &\eqdef 
%     \frac{\left|
%         \frac{1}{2}E 
%         +_p 
%         \frac{1}{2}F
%     \right|^{\frac{p}{n}} }{
%         \frac{1}{2} |E|^{\frac{p}{n}} 
%         + 
%         \frac{1}{2} |F|^{\frac{p}{n}}
%     }-1, \ 
%     \delta_{\mathrm{ISO},p}(E,F) 
%     \eqdef 
%     \frac{P_{F,p}(E)}{
%         n|F|^{\frac{p}{n}}{|E|}^{\frac{n-p}{n}}
%     } - 1,
% \end{align*}
% to control the Fraenkel asymmetry directly. 

% In the regime $1 < p < n$, we derive a quantitative stability of inequality~\eqref{eq.Lp_Minkowski_inequality}, while in the regime $n < p$ we obtain a stronger version to the one found in Lemma~\ref{lemma.quantitative_LpBM}. These regimes dictate the convexity or concavity of $t \mapsto t^{p/n}$, which is explored in the following Lemma.

\begin{lemma}\label{lemma.quantitative_Lp_isoperimetric}
    Given $p > 1$, there exists a constant $C_n > 0$, depending only on the dimension, such that for any convex bodies $E,F$ we have that
    \begin{equation}\label{eq.fraenkel_Lp_isoperimetry}
        C_n\, \alpha(E,F)^2 \le \delta_{\mathrm{ISO},p}(E,F).
    \end{equation}

    Using Diskant's version of the quantitative isoperimetric inequality, given $0 <r <R$, there is a constant $C$ depending on $r,R,p$ and $n$ such that for any pair of convex bodies $E,F$ such that $r\le r_E, r_F$ and $R_E,R_F \le R$, it holds that 
    \begin{equation}\label{eq.hausdorff_Lp_isoperimetry}
        \inf_{x \in \mathbb{R}^n} 
        \dH(E, x + \lambda F)^n\le C \delta_{\mathrm{ISO},p}(E,F), 
    \end{equation}
    where $\lambda = \left(\frac{|E|}{|F|}\right)^{\frac{1}{n}}$.
\end{lemma}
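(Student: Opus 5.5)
The plan is to reduce the $L_p$ statement to the classical ($p=1$) quantitative anisotropic isoperimetric inequality by a single application of Hölder's inequality, as in the derivation of \eqref{eq.LpWulff} from \eqref{eq.Wulff_inequality}. As in Section~\ref{sec.technical_estimates_p>1}, I assume that $E$ and $F$ contain the origin, so that $h_E,h_F\ge 0$. I also assume $S_E(\{h_E=0\})=0$, so that $V_p(E,F)=\frac1n\int_{\mathbb{S}^{n-1}} h_F^p h_E^{1-p}\,\dd S_E$ makes sense.

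\emph{Step 1 (Hölder).} Split $h_F=h_F h_E^{\frac{1-p}{p}}\cdot h_E^{\frac{p-1}{p}}$ and apply Hölder with exponents $p$ and $\frac{p}{p-1}$ with respect to the measure $\frac1n S_E$:
\[
V_1(E,F)=\frac1n\int_{\mathbb{S}^{n-1}} h_F\,\dd S_E\le \Big(\frac1n\int_{\mathbb{S}^{n-1}} h_F^p h_E^{1-p}\,\dd S_E\Big)^{\frac1p}\Big(\frac1n\int_{\mathbb{S}^{n-1}} h_E\,\dd S_E\Big)^{\frac{p-1}{p}} .
\]
The right-hand side equals $V_p(E,F)^{1/p}\,|E|^{\frac{p-1}{p}}$, up to the factor $p$ in the paper's normalization of $V_p$, which I will track carefully so that the equality case $E=F$ gives exactly $|E|$. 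Hence
\[
V_p(E,F)\ge V_1(E,F)^p\,|E|^{1-p}.
\]

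\emph{Step 2 (quantitative Wulff inequality).} Next I invoke the quantitative anisotropic isoperimetric inequality of \cite{figalli2010stability}, which in the present notation reads
\[
V_1(E,F)\ge |E|^{\frac{n-1}{n}}|F|^{\frac1n}\big(1+c_n\,\alpha(E,F)^2\big),
\]
with a dimensional constant $c_n$. If needed, I will first convert it from the perimeter-deficit form by using $nV_1(E,F)=P_F(E)$. Substituting into Step 1 and using
\[
p\,\frac{n-1}{n}+1-p=\frac{n-p}{n}
\]
gives
\[
V_p(E,F)\ge |E|^{\frac{n-p}{n}}|F|^{\frac pn}\big(1+c_n\alpha(E,F)^2\big)^p\ge |E|^{\frac{n-p}{n}}|F|^{\frac pn}\big(1+c_n\alpha(E,F)^2\big),
\]
where the last step uses $p>1$. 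This is \eqref{eq.fraenkel_Lp_isoperimetry} with $C_n=c_n$. In particular the constant is independent of $p$.

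\emph{Step 3 (Hausdorff version).} For \eqref{eq.hausdorff_Lp_isoperimetry} I run the same chain, replacing Step 2 by Diskant's isoperimetric stability \cite{diskant1972bounds}. Under $r\le r_E,r_F$ and $R_E,R_F\le R$ it states
\[
V_1(E,F)\ge |E|^{\frac{n-1}{n}}|F|^{\frac1n}\Big(1+C^{-1}\inf_x \dH(E,x+\lambda F)^n\Big),
\]
which can also be derived from \eqref{eq.diskant_isoperimetric} via the usual first-variation argument. Raising to the power $p\ge1$ again only helps, and the constant depends on $r,R,n$ (and on $p$ only through the normalization).

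\emph{Main obstacle.} I expect the delicate point to be the bookkeeping of normalizations rather than any deep estimate. Three things must be checked. First, the deficit form used in \cite{figalli2010stability} must be matched with $V_1$ and with $\delta_{\mathrm{ISO},p}$, including the factor $\frac pn$ in the definition of $V_p$. Second, Hölder must be legitimate when $o\in\partial E$; there I would use $S_E(\{h_E=0\})=0$, or else approximate by bodies containing the origin in their interior and pass to the limit, since both sides of the inequality are continuous in the Hausdorff metric. Third, the Fraenkel asymmetry must remain on the correct side of the inequality after the power $p$ is applied, which holds because $t\mapsto(1+t)^p$ is increasing and at least $1+t$ for $t\ge0$.
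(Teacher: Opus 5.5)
Your proposal is correct and follows the paper's proof. Your H\"older step with respect to $\frac1n S_E$ is the same as the paper's Jensen step for the cone-volume probability measure $h_E\,\dd S_E/(n|E|)$: both give $V_p(E,F)^{1/p}\ge |E|^{\frac1p-1}V_1(E,F)$, after which both arguments insert the quantitative anisotropic isoperimetric inequality (resp.\ Diskant's estimate) and use $(1+t)^p\ge 1+t$. Your attention to the normalization of $V_p$ is warranted, since the paper's proof silently uses the $\frac1n$ normalization rather than the $\frac pn$ one in its definition, and so is your care with the case $o\in\partial E$; neither changes the argument.
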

\begin{proof}
    Defining the volume measure $V_E \eqdef \frac{1}{n} h_E S_E$, we recall that $|E| = V_E(\mathbb{S}^{n-1})$. As a result, from Jensen's inequality we have that 
    \begin{align*}
        {V_p(E,F)}^{1/p} 
        &= 
        {|E|}^{1/p}
        {\left(
            \int_{\mathbb{S}^{n-1}} 
            \frac{h^p_F}{h^p_E}\,
            \frac{\dd V_E}{|E|}
        \right)}^{1/p}
        \ge 
         {|E|}^{1/p}
        \int_{\mathbb{S}^{n-1}} 
        \frac{h_F}{h_E}\,
        \frac{\dd V_E}{|E|}\\
        &= 
        |E|^{\frac{1}{p} - 1}
        \frac{1}{n}
        \int_{\mathbb{S}^{n-1}} 
        h_F\dd S_E 
        = 
        |E|^{\frac{1}{p} - 1}
        V_1(E,F)
        \\
        & \ge 
        |F|^{\frac{1}{n}} 
        |E|^{\frac{1}{p} - \frac{1}{n}} 
        \left(
            1 + C_n\alpha(E,F)^2
        \right),
    \end{align*}
    where in the last identity we have used the quantitative anisotropic isoperimetric inequality. This yields that
    \[
        V_p(E,F)
        \ge 
        |F|^{\frac{p}{n}} 
        |E|^{\frac{n-p}{n}} 
        \left(
            1 + C_n\alpha(E,F)^2
        \right)^p.
    \]
    As $p > 1$ and the quantity on the $p$-th power on the right-hand side is at least $1$, we have that 
    \[
        C_n\, \alpha(E,F)^2 \le 
        \delta_{\mathrm{ISO},p}(E,F),
    \]
    and the result follows.  

    By employing Diskant's version of the isoperimetric inequality~\eqref{eq.diskant_isoperimetric} in the above argument, we obtain~\eqref{eq.hausdorff_Lp_isoperimetry}. 
\end{proof}

% \subsection{Proof of Theorem~\ref{thm.quantitative_stability_p} via $L_p$ isoperimetric rigidity} 

\subsection{Proof of Theorem~\ref{thm.quantitative_stability_p} %via $L_p$ variational stability
}

We can also adapt the variational proof developed in Section~\ref{sec.p=1_variational} to prove Theorem~\ref{thm.quantitative_stability_p}. In this case, using the $L_p$ Brunn-Minkowski theory, the quantity $E \mapsto {|E|}^{p/d}$ is concave for the $L_p$ Minkowski sum. For the formulation in terms of functions $\varphi \in \mathscr{C}_b(\mathbb{S}^{n-1})$, we have that the quantity 
\[
    \varphi \mapsto {|E_\varphi|}^{\frac{p}{n}}
\]
is concave but for a non-linear type of variation, \textit{i.e.} $\varphi_t \mapsto {|E_{\varphi_t}|}^{\frac{p}{n}}$ is concave with $\varphi_t \eqdef {\left((1-t)\varphi_0^p + t \varphi_1^p\right)}^{1/p}$. 

As a result, if before the central idea was balancing a suitable power of the volume with a linear functional induced by the measure $\mu$, now we need to consider functionals which are linear w.r.t.~the type of variations that make $\varphi \mapsto {|E_\varphi|}^{\frac{p}{n}}$ concave. So in this framework the corresponding linear functional becomes
\[
    \varphi \mapsto \int_{\mathbb{S}^{n-1}} \varphi^p \dd \mu.
\]

Once again, the quantities $t \mapsto {|E_{t\varphi}|}^{\frac{p}{n}}$ and $t \mapsto \int_{\mathbb{S}^{n-1}} {(t\varphi)}^p \dd \mu$ scale both like $t^p$. So to maximize the corresponding energy to~\eqref{eq.minko_energy_fixed_scale}, we need to fix the scale with a suitable constant that will be defined as 
\begin{equation}
    \lambda_{\mu,p} 
    \eqdef 
    \inf_{K} 
    F_{\mu,p}(K) 
    \eqdef
    \frac{
        \displaystyle 
        \int_{\mathbb{S}^{n-1}} {h_K}^p \dd \mu
    }{
        |K|^{\frac{p}{n}}
    },
\end{equation}
where the infimum is taken among all convex bodies $K$ such that $o \in K$ and $S_{K,p}(\{h_K = 0\}) = 0$. So the energy whose maximization yields $L_p$ Minkowski bodies becomes
\begin{equation}\label{eq.prob_L_pMinkowski}
    \sup_{\varphi \in \mathscr{C}_b(\mathbb{S}^{n-1})} 
    \mathscr{F}_{\mu,p}(\varphi) 
    \eqdef 
    \lambda_{\mu,p}
    {|E_{\varphi}|}^{p/n}
    -
    \int_{\mathbb{S}^{n-1}} \varphi^p \dd \mu. 
\end{equation}
As in the $p = 1$, with this constant, the correct scale is fixed for the maximization of $\mathscr{F}_{\mu,p}$, and it selects precisely the Minkowski bodies associated with $\mu$. The analogous properties to the ones found in Lemma~\ref{lemma.variational_problem} are collected in the following result, whose proof is completely analogous to the $p=1$ case detailed in Section~\ref{sec.p=1_variational}. 

\begin{lemma}\label{lemma.variational_problem_Lp}
    For all $1\le p \neq n$, the following assertions hold: 
    \begin{enumerate}
        \item Let $\mu \in \mathscr{P}(\mathbb{S}^{n-1})$ such that $\Theta_+(\mu) > 0$, then the quantity $K \mapsto F(K)$ is minimized by any dilation of the $L_p$ Minkowski body $E_{\mu,p}$ associated with $\mu$. As a result we have 
        \[
            \lambda_{\mu,p} = \frac{n}{p} {|E_{\mu,p}|}^{\frac{n-p}{n}}, 
        \]
        where $E_{\mu,p}$ is the unique convex body such that $S_{E_{\mu,p} , p} = \mu$.
        \item A support function $\varphi$ maximizes the energy $\mathscr{F}_{\mu,p}$ if, and only if, it is a dilation of the support function $\varphi_{\mu,p}$ associated with the $L_p$ Minkowski body $E_{\mu,p}$ of $\mu$. 
        \item For $\mu,\nu$ such that $\Theta(\mu),\Theta(\nu) > 0$ it holds that 
        \[
            |\lambda_{\mu,p} - \lambda_{\nu,p}| 
            \le 
            C_{\mu,\nu} \dc(\mu,\nu),
        \]
        where $C_{\mu,\nu} =  \max\left\{\frac{R(E_{\mu,p})}{|E_{\mu,p}|^{p/n}}, \frac{R(E_{\nu,p})}{|E_{\nu,p}|^{p/n}}  \right\}$. 
    \end{enumerate}
\end{lemma}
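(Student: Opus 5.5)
We follow the $p=1$ argument of Lemma~\ref{lemma.variational_problem} and Lemma~\ref{lemma.maxi_Fmu}, with the Wulff inequality replaced by the $L_p$-Wulff inequality~\eqref{eq.LpWulff} and linear combinations of support functions replaced by $L_p$ combinations.

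\emph{Item (1).} Let $E_{\mu,p}$ be the body given by Theorem~\ref{thm.existence_Minko_Lp}, which applies since $\Theta_+(\mu)>0$. For any admissible $K$ we have $S_{E_{\mu,p},p}=\mu$, hence
\[
\int_{\mathbb{S}^{n-1}} h_K^p \dd\mu=\frac{n}{p}V_p(E_{\mu,p},K)\ge \frac{n}{p}|E_{\mu,p}|^{\frac{n-p}{n}}|K|^{\frac{p}{n}}.
\]
Therefore $F_{\mu,p}(K)\ge \frac{n}{p}|E_{\mu,p}|^{\frac{n-p}{n}}$. The volume formula $\int h_{E}^p\dd S_{E,p}=\frac{n}{p}|E|$ shows that $K=E_{\mu,p}$ attains this bound. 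Since $F_{\mu,p}$ is invariant under dilations ($h_{sK}^p=s^ph_K^p$ and $|sK|^{p/n}=s^p|K|^{p/n}$), every dilation of $E_{\mu,p}$ is also a minimizer. Conversely, a minimizer is characterized by the equality case of~\eqref{eq.LpWulff}, which forces dilates for $p\ne1$ because translations are not allowed.

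\emph{Item (2).} For any $\varphi$ with body $E_\varphi$, item (1) gives $\int\varphi^p\dd\mu\ge \lambda_{\mu,p}|E_\varphi|^{p/n}$, so $\mathscr{F}_{\mu,p}(\varphi)\le 0$. Equality holds exactly when $E_\varphi$ is a minimizer of $F_{\mu,p}$. We have $\mathscr{F}_{\mu,p}(\varphi_{\mu,p})=0$, so the maximizers are precisely the dilations of $\varphi_{\mu,p}$.

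One point needs care: $\varphi$ should be a nonnegative support function with $o\in E_\varphi$, or else $\varphi^p$ must be interpreted as $(\varphi_+)^p$. Replacing $\varphi$ by $h_{E_\varphi}$ only increases the energy, so we may restrict to such $\varphi$. The step I expect to need the most justification is the rigidity in the equality case of the $L_p$-Wulff inequality. It follows from the Jensen step in the proof of Lemma~\ref{lemma.quantitative_Lp_isoperimetric}: equality forces $h_F/h_E$ to be constant $V_E$-a.e., and combined with equality in the $p=1$ Wulff inequality this gives homothety with no translation.

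\emph{Item (3).} Test $F_{\mu,p}$ with $E_{\nu,p}$:
\[
\lambda_{\mu,p}-\lambda_{\nu,p}\le \frac{1}{|E_{\nu,p}|^{p/n}}\int h_{E_{\nu,p}}^p\dd(\mu-\nu).
\]
We must bound this integral by $\dc(\mu,\nu)$. Here $h^p_{E_{\nu,p}}$ is not a support function, so we cannot apply the definition~\eqref{eq.convex_dual_distance} directly; this is the main obstacle. The function $h^p$ is Lipschitz with constant $pR^{p-1}\cdot R$, which suffices when $\dc$ is replaced by $W_1$ or a Lipschitz-dual norm. To stay with $\dc$, I would first try writing $h^p$ as a difference of support functions up to a constant: on the sphere, the $1$-homogeneous extension of $h^p$ plus $M|x|$ is convex for $M$ large, depending on $R$, $r$ and $p$. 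The smallness of $\dc$ against the ball's support function (total mass) then gives a bound by $C\,\dc(\mu,\nu)$ with a constant of the form $C(R,r,p)/|E_{\nu,p}|^{p/n}$. This constant is consistent with the stated $C_{\mu,\nu}$ after absorbing factors. Swapping $\mu$ and $\nu$ gives the absolute value.
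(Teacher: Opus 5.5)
Your route is the one the paper intends (it only says the proof is ``completely analogous'' to $p=1$). However, the attainment step in item (1) is false as written, and item (2) inherits the error. The identity $\int h_E^p\,\dd S_{E,p}=\frac np|E|$ that you take from Section~\ref{sec.technical_estimates_p>1} fails for $p\neq 1$. Directly from $\dd S_{E,p}=h_E^{1-p}\,\dd S_E$,
\[
\int_{\mathbb{S}^{n-1}} h_E^p\,\dd S_{E,p}=\int_{\mathbb{S}^{n-1}} h_E\,\dd S_E=n|E|.
\]
So $F_{\mu,p}(E_{\mu,p})=n|E_{\mu,p}|^{\frac{n-p}{n}}$, which is strictly above your lower bound $\frac np|E_{\mu,p}|^{\frac{n-p}{n}}$ when $p>1$. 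With your value of $\lambda_{\mu,p}$, the claim $\mathscr{F}_{\mu,p}(\varphi_{\mu,p})=0$ in item (2) would also fail.

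The source is a normalization slip in the paper. Its $V_p(K,L)=\frac pn\int h_L^p\,\dd S_{K,p}$ is $p$ times the standard $L_p$ mixed volume, because the first-variation formula \eqref{eq.first_variation_volume_Lp} is missing a factor $\frac1p$. Consequently \eqref{eq.LpWulff} as written is true but strict for every pair when $p>1$, so it cannot characterize minimizers. Use instead the sharp inequality
\[
\int_{\mathbb{S}^{n-1}} h_K^p\,\dd S_{E,p}\ \ge\ n\,|E|^{\frac{n-p}{n}}|K|^{\frac pn}.
\]
This is exactly what the Jensen--Wulff chain in the proof of Lemma~\ref{lemma.quantitative_Lp_isoperimetric} gives (with $V_E=\frac1n h_E S_E$), i.e.\ the chain you already invoke for rigidity. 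With it, $E_{\mu,p}$ attains the bound and $\lambda_{\mu,p}=n|E_{\mu,p}|^{\frac{n-p}{n}}$, which is the value the paper itself uses in the proof of Theorem~\ref{thm.quantitative_stability_p}. Your equality-case argument for items (1)--(2) then goes through. The formula in item (1) should carry $n$, not $\frac np$.

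In item (3) you correctly spot what the paper's analogy skips. For $p=1$ the $\dc$ bound is immediate because $h_{E_\nu}/R$ is the support function of a body in $B_1(0)$, and this has no analogue for $h^p$. Your difference-of-support-functions idea is the right fix, and it needs no dependence on $r$.

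\begin{itemize}
\item Suppose $E_{\nu,p}\subseteq B_R(0)$, and set $h=h_{E_{\nu,p}}$, $M=(p-1)R^p$, $\phi(u)=u^p+M$.
\item The map $x\mapsto |x|\,\phi(h(x)/|x|)$ is the jointly convex perspective $P(s,t)=s\phi(t/s)$ composed with the convex map $x\mapsto(|x|,h(x))$.
\item On the cone $\{0\le t\le Rs\}$, which contains that image, you have $\partial_t P\ge 0$ and $\partial_s P=M-(p-1)(t/s)^p\ge 0$.
\item Hence the composition is convex and $1$-homogeneous, so $h^p+M=h_L$ on $\mathbb{S}^{n-1}$ for a convex body $B_M(0)\subseteq L\subseteq B_{pR^p}(0)$.
\item Since $\mu,\nu$ are probability measures the constant cancels exactly, giving $\int h^p\,\dd(\mu-\nu)=\int h_L\,\dd(\mu-\nu)\le pR^p\,\dc(\mu,\nu)$.
\end{itemize}

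The resulting constant is $p\max\{R(E_{\mu,p})^p/|E_{\mu,p}|^{p/n},\,R(E_{\nu,p})^p/|E_{\nu,p}|^{p/n}\}$. It has $R^p$ where the statement has $R$, so it is not literally $C_{\mu,\nu}$, and ``absorbing factors'' is not quite accurate. It still suffices for Theorem~\ref{thm.quantitative_stability_p}, since $R$ and $|E|$ are controlled by $\vartheta$ through Lemmas~\ref{lemma.rR_Lp<n} and~\ref{lemma.rR_Lp>n}. Finally, this step uses $\lambda_{\nu,p}=F_{\nu,p}(E_{\nu,p})$, i.e.\ the existence of $E_{\nu,p}$. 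It therefore needs $\Theta_+(\nu)>0$, not merely $\Theta(\nu)>0$ as the statement assumes.
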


The final ingredient is the generalization of Alexandrov's Lemma to the $L_p$ case, proven by Lutwak~\cite{lutwak1992brunn-minkowki-firey}, which gives us the first variation of the volume functional with variations performed with the $L_p$ Minkowski sum. This was already discussed in Section~\ref{sec.technical_estimates_p>1}, see~\eqref{eq.first_variation_volume_Lp}. In particular, it gives the first variation of $\mathscr{F}_{\mu,p}$ analogously to~\eqref{eq.1st_order}. For two functions $\varphi_0, \varphi_1$ and $\varphi_t \eqdef \left((1-t)\varphi_0^p + t \varphi_1^p\right)^{1/p}$, we have that 
\begin{equation}\label{eq.Alexandrov_Lp}
    \frac{\dd}{\dd t}\Big|_{t = 0} 
    \mathscr{F}_{\mu,p}(\varphi_t) 
    = 
    \inner{\nabla_p \mathscr{F}_{\mu,p}(\varphi_0), \varphi_1^p - \varphi_0^p}, 
\end{equation}
and $\nabla_p \mathscr{F}_{\mu,p}(\varphi_0)$ corresponds to the Radon measure given by 
\[
    \nabla_p \mathscr{F}_{\mu,p}(\varphi_0)
    = 
    \frac{\lambda_{\mu,p}}{\frac{n}{p}|E_{\varphi_0}|^{\frac{n-p}{n}}} S_{E_{\varphi_0},p} - \mu.
\]

These properties give us all the elements to adapt the variational proof of Theorem~\ref{thm.quantitative_stability_p=1} to the $L_p$ case.

\begin{proof}[Proof of Thm.~\ref{thm.quantitative_stability_p}]
    Consider $\mu,\nu \in \mathscr{P}(\mathbb{S}^{n-1})$ such that $\Theta_+(\mu),\Theta_+(\nu) \ge \vartheta > 0$. From the positivity of the functional $\Theta_+$, Theorem~\ref{thm.existence_Minko_Lp} asserts the existence of $L_p$ Minkowski bodies $E_{\mu,p},E_{\nu,p}$ associated with $\mu,\nu$. Let $\varphi_{\mu,p}$ and $\varphi_{\nu,p}$ be the corresponding support functions.

    As in the $p = 1$ case, we can use the both the strong Fraenkel asymmetry and Hausdorff distance rigidity from Lemma~\ref{lemma.quantitative_LpBM}, by leveraging the variational formulation discussed above. We know from Lemma~\ref{lemma.variational_problem_Lp} that $\varphi_{\mu,p}$ and $\varphi_{\nu,p}$ maximize the functionals $\mathscr{F}_{\mu,p}$ and $\mathscr{F}_{\nu,p}$, respectively. As in the $p=1$ case, we want to exploit the concavity of $\mathscr{F}_{\mu,p}$, but this time this concavity is expressed in terms of the following nonlinear variation 
    \[
        \varphi_{1/2} \eqdef {\left(\frac{1}{2}\varphi_{\mu,p}^p + \frac{1}{2}\varphi_{\nu,p}^p\right)}^{1/p},
    \] 
    which also has the advantage of preserving the portion of the energy coming from $\varphi \mapsto \int_{\mathbb{S}^{n-1}} \varphi^p \dd \mu$. 
    
    From the $L_p$ Brunn-Minkowski inequality, the following quantity is non-negative
    \begin{equation}\label{eq.def_delta}
        \begin{aligned}
            0 
            \le \delta 
            \eqdef&
            \mathscr{F}_{\mu,p}(\varphi_{1/2}) 
            - 
            \frac{1}{2}\left(
                \mathscr{F}_{\mu,p}(\varphi_{\mu}) + \mathscr{F}_{\mu,p}(\varphi_{\nu})
            \right)\\ 
            =& 
            {\left|\frac{1}{2}E_{\mu,p} + \frac{1}{2}E_{\nu,p}\right|}^{\frac{1}{n}} 
            - 
            \frac{1}{2}\left(
                |E_{\mu,p}|^{p/n} + |E_{\nu,p}|^{p/n}
            \right)\\
            &+ 
            \int_{\mathbb{S}^{n-1}}
            \underbrace{
            \left(
                    \varphi_{1/2}^p 
                    -\frac{1}{2}\varphi_{\mu,p}^p
                    -\frac{1}{2}\varphi_{\nu,p}^p
                \right)  
            }_{=0} \dd \mu \\ 
            =& 
            {\left|\frac{1}{2}E_{\mu,p} + \frac{1}{2}E_{\nu,p}\right|}^{p/n} 
            - 
            \frac{1}{2}\left(
                |E_{\mu,p}|^{p/n} + |E_{\nu,p}|^{p/n}
            \right).
        \end{aligned}
    \end{equation}

    Since the volumes $|E_{\mu,p}|,|E_{\nu,p}|$ are uniformly controlled from above and from below, up to using a worse constant in the quantitative $L_p$ Brunn-Minkowski inequality from~\ref{lemma.quantitative_LpBM} we have the bound
    \begin{equation}
        \begin{aligned}
            {\left|\frac{1}{2}E_{\mu,p} + \frac{1}{2}E_{\nu,p}\right|}^{p/n} 
            &\ge 
            \frac{1}{2}\left(
                |E_{\mu,p}|^{p/n} + |E_{\nu,p}|^{p/n}
            \right) 
            \left(
                1 + C'_{n,p,\vartheta}\ \alpha(E_{\mu,p},E_{\nu,p})^2
            \right)^{p/n}\\ 
            &\ge
            \frac{1}{2}\left(
                |E_{\mu,p}|^{p/n} + |E_{\nu,p}|^{p/n}
            \right) 
            \left(
                1 + \frac{p}{n}C'_{n,p,\vartheta}\  \alpha(E_{\mu,p},E_{\nu,p})^2
            \right)
        \end{aligned}
    \end{equation}
    As a result, Lemma~\ref{lemma.quantitative_LpBM} yields the following control 
    \begin{equation}\label{eq.control_Fraenkel_Lp}
        \inf_{x \in \mathbb{R}^n} 
        \dH(E_{\mu,p},x + E_{\nu,p})^n 
        \le C_{n,p,\vartheta}\ \delta
        \text{ and }
        \alpha(E_{\mu,p},E_{\nu,p})^2 \le C_{n,p,\vartheta}\ \delta. 
    \end{equation}

    As in the $p = 1$ case, to control the quantity $\delta$ above we use the concavity of the function $t \mapsto F(t) \eqdef \mathscr{F}_{\mu,p}(\varphi_{t})$, for $\varphi_t \eqdef {\left( (1-t)\varphi_{\mu,p}^p + t\varphi_{\nu,p}^p \right)}^{1/p}$, along with the optimality of $\mathscr{F}_{\mu,p}(\varphi_{1/2}) \le \mathscr{F}_{\mu,p}(\varphi_{\mu,p})$. With the analogous arguments from the $p=1$ case, we have that 
    \begin{align*}
        \delta 
        &= 
        \mathscr{F}_{\mu,p}(\varphi_{1/2}) 
        - 
        \frac{1}{2}
        \left(
            \mathscr{F}_{\mu,p}(\varphi_{\mu,p}) 
            + 
            \mathscr{F}_{\mu,p}(\varphi_{\mu,p}) 
        \right)
        \le \frac{1}{2}
        \left(
            \mathscr{F}_{\mu,p}(\varphi_{\mu,p}) 
            - 
            \mathscr{F}_{\mu,p}(\varphi_{\mu,p}) 
        \right)\\ 
        &\le 
        \frac{1}{2}F'(0) = 
        \frac{1}{2}
        \inner{\nabla_p \mathscr{F}_{\mu,p}(\varphi_{\nu,p}), \varphi_{\mu,p}^p - \varphi_{\nu,p}^p}\\
        &= 
        \frac{1}{2}
        \inner{
            \frac{\lambda_{\mu,p}}{\lambda_{\nu,p}}S_{E_{\nu,p}} - \mu, \varphi_{\mu,p}^p - \varphi_{\nu,p}^p}
        = \frac{1}{2\lambda_{\nu,p}}
        \inner{\lambda_{\mu,p} \nu - \lambda_{\nu,p} \mu, \varphi_{\mu,p}^p - \varphi_{\nu,p}^p}\\ 
        &\le 
        C_{n,p,\vartheta}
        \inner{\lambda_{\mu,p} \nu - \lambda_{\nu,p} \mu, \varphi_{\mu,p} - \varphi_{\nu,p}}. 
    \end{align*}

    Using the quantitative stability of the constant $\mu \mapsto \lambda_{\mu,p}$ with respect to $\dc(\mu,\nu)$, from Lemma~\ref{lemma.variational_problem_Lp}, this last quantity can then be bounded with the dual convex distance $\dc(\mu,\nu)$, giving the following estimates
    \begin{equation}%\label{eq.control_Fraenkel_Lp}
        \inf_{x \in \mathbb{R}^n} 
        \dH(E_{\mu,p},x + E_{\nu,p})^n 
        \le C_{n,p,\vartheta} \dc(\mu,\nu)
        \text{ and }
        \alpha(E_{\mu,p},E_{\nu,p})^2 \le C_{n,p,\vartheta}
        \dc(\mu,\nu).  
    \end{equation}

    In the particular case that both $\mu,\nu$ are centered at the origin, we can obtain a stronger estimate by means of Diskant's quantitative Brunn-Minkowski theory employed for $p=1$. In this case, we can reinforce the estimation on $\delta$ above, obtaining for all $x_0 \in \mathbb{R}^n$ that 
    \begin{align*}
        \delta \le&
        \frac{\lambda_{\mu,p}}{2\lambda_{\nu,p}} 
        \inner{\nu - \mu, h_{E_{\mu,p}}- h_{E_{\nu,p} + x_0}}
        + 
        \frac{\lambda_{\mu,p} - \lambda_{\nu,p}}{2\lambda_{\nu,p}} 
        \inner{\mu,  h_{E_{\mu,p}}- h_{E_{\nu,p} + x_0}} \\
        &+ 
        \frac{\lambda_{\mu,p}}{2\lambda_{\nu,p}} 
        \int_{\mathbb{S}^{n-1}}
        \inner{\theta, x_0}\dd(\nu - \mu)(\theta)
        +
        \frac{\lambda_{\mu,p} - \lambda_{\nu,p}}{2\lambda_{\nu,p}} 
        \int_{\mathbb{S}^{n-1}}
        \inner{\theta,x_0} \dd \mu (\theta)\\ 
        &\le 
        C_{n,\vartheta,p} \dc(\mu,\nu) \dH(E_{\mu,p}, x_0 + E_{\nu,p}),
   \end{align*}
   where we have used the Lipschitz behavior of $\mu \mapsto \lambda_{\mu,p} \eqdef n |E_{\mu,p}|^{\frac{n-p}{n}}$, as shown in~\ref{lemma.variational_problem_Lp}.

   Therefore, as for $p=1$, we summarize the informaiton comming from the quantitative $L^p$ Brunn-Minkowski inequality, for both the Fraenkel asymmetry and Hausdorff distance versions in the following
      \begin{align}
        \label{eq.estimate_deltaBM}
        \delta_{\BM,p}(E_{\mu,p},E_{\nu,p}) 
        &\le 
        C_1 \dc(\mu,\nu) \inf_{x_0 \in \mathbb{R}^n} \dH(E_{\mu,p}, x_0 + E_{\nu,p})\\
        \label{eq.BM_AMP}
        \alpha(E_{\mu,p},E_{\nu,p})^2
        &\le 
        C_2 \delta_{\BM,p}(E_{\mu,p},E_{\nu,p}) \\
        \label{eq.BM_Diskant}
        \inf_{x_0 \in \mathbb{R}^n} 
        \dH(E_{\mu,p}, x_0 + \lambda E_{\nu,p})^n
        &\le 
        C_3 \delta_{\BM,p}(E_{\mu,p},E_{\nu,p}),
   \end{align}  
   where $\lambda = \left(\frac{|E_{\mu,p}|}{|E_{\nu,p}|}\right)^{\frac{n-p}{n}}$. 

   With the same arguments as for $p=1$, we can remove the dilation $\lambda$ in~\eqref{eq.BM_Diskant} at the cost of a bigger stability constant, and we obtain the estimates for $\mu,\nu$ centered at the origin:
    \begin{align*}
        \inf_{x_0 \in \mathbb{R}^n} 
        \dH(E_{\mu,p}, x_0 + E_{\nu,p})
        \le 
        C_{n,p,\vartheta} {\dc(\mu,\nu)}^{\frac{1}{n-1}}, 
        \quad 
        \alpha(E_{\mu,p},E_{\nu,p})^2
        \le 
        C_{n,p,\vartheta} \dc(\mu,\nu)^{1 + \frac{1}{n-1}}.
    \end{align*}
    This finishes the proof of Theorem~\ref{thm.quantitative_stability_p}.
\end{proof}

% \section{Polyhedral approximation of convex bodies}\label{sec.polyhedral_approximation}

% Taking inspiration from~\cite{abdallah2015reconstruction}, we can use the stability results obtained for the Minkowski problem to obtain approximation results for convex bodies by means of polyhedral sets, as well as for the reconstruction of convex bodies by means of random normal measurements. 

% The question of optimal approximation of convex bodies by means of polyhedral sets is a classical one in convex geometry, see for instance~\cite{bronstein2008approximation}, or~\cite{mehamdi2025duality} for a more recent approach. Letting $\mathcal{P}_N$ denote the family of polyhedral sets with at most $N$ faces, and a suitable distance $\dist$ between convex bodies, given a convex body $K$ we seek to estimate 
% \[
%     \min_{P \in \mathcal{P}_N} \dist(K,P), 
% \]
% or at least to construct a polyhedral set $P_N$ with at most $N$ that is asymptotically optimal as $N \to \infty$. Classical results on this litterature give a rate of approximation for $\mathscr{C}^2$ bodies $K$ in the order $N^{-\frac{2}{n-1}}$ in the Hausdorff distance, with a constant depending on the dimension and the Gaussian curvature of the $K$~\cite{bronstein2008approximation}.

\bibliographystyle{alpha}
\bibliography{main.bib}

\end{document}